\title
[Self and partial gluing]
{Self and partial gluing theorems for Alexandrov spaces with a lower curvature bound} 
\author{Ayato Mitsuishi}
\email[A.~Mitsuishi]{mitsuishi@math.gakushuin.ac.jp}
\date{\today}
\theoremstyle{plain}
\newtheorem{theorem}{Theorem}[section]
\newtheorem{lemma}[theorem]{Lemma}
\newtheorem{corollary}[theorem]{Corollary}
\newtheorem{proposition}[theorem]{Proposition}
\newtheorem{definition}[theorem]{Definition}
\newtheorem{remark}[theorem]{Remark}
\newtheorem{example}[theorem]{Example}
\newcommand{\gexp}[0]{\mathrm{gexp}}
\newcommand{\snk}[0]{\mathrm{sn}_\kappa}
\newcommand{\csk}[0]{\mathrm{cs}_\kappa}
\begin{document}
\maketitle

\begin{abstract}
This paper is devoted to prove that 
if an Alexandrov space of curvature not less than $\kappa$ with a codimension one extremal subset which admits an isometric involution with respect to the induced length metric, then the metric space obtained by gluing the extremal subset along the isometry is an Alexandrov space of curvature not less than $\kappa$. 
This is a generalization of Perelman's doubling and Petrunin's gluing theorems.
\end{abstract}

\section{Introduction and results}
Alexandrov spaces (of curvature bounded from below) are generalized objects of complete Riemannian manifolds. 
Such spaces naturally appear as the Gromov-Hausdorff limits of complete Riemannian manifolds 
under a uniform lower sectional curvature bound. 
The quotients of complete Riemannian manifolds by isometric actions possibly with fixed points are also Alexandrov spaces. 
Alexandrov spaces are stable in several geometric constructions, 
which are, taking the product of two spaces and taking the cone and the join of spaces of curvature not less than one. 
On the other hands, metric spaces having an upper curvature bound in the sense of Alexandrov, which are called CAT-spaces, 
are also stable in the geometric constructions mentioned above (product, cone, join). 
Reshetnyak (\cite{Re}) proved that for two (or more many) CAT-spaces and their convex subsets, if the convex subsets are isometric to each other, then the metric space obtained by gluing the CAT-spaces along the convex subsets via the isometries is again a CAT-space. 
For Alexandrov spaces, such a construction does not work in general. 
For instance, a set of single point in the plane is convex and the metric space obtained by gluing two planes at the base points admits branching geodesics, that can not admit Alexandrov's lower curvature bound. 

On the other hands, Perelman (\cite{Per:Alex2}) and Petrunin (\cite{Pet:appl}) proved that Alexandrov's lower curvature bounds are stable by gluing thier boundaries via isometric mapping with respect to the induced length metric. 
Exactly, Petrunin proved 
\begin{theorem}[\cite{Pet:appl}] \label{thm:Pet gluing}
Let $M_1$ and $M_2$ be Alexandrov spaces of the same dimension and having the same lower curvature bound $\kappa$ with non-empty boundaries $\partial M_1$ and $\partial M_2$. 
If there is an isometry $f : \partial M_1 \to \partial M_2$ with respect to the induced length metric, 
then the metric space $M_1 \cup_f M_2$ obtained by gluing $M_1$ and $M_2$ along thier boundaries via $f$ is an Alexandrov space of curvature $\ge \kappa$.
\end{theorem}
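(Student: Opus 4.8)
The plan is to verify the bound $\ge\kappa$ for $W:=M_1\cup_f M_2$ in a neighbourhood of each point and then to globalize. By the Toponogov globalization theorem (Perelman, Petrunin), a complete length space each of whose points has a neighbourhood satisfying the $\kappa$-triangle comparison has curvature $\ge\kappa$ everywhere, so it is enough to produce such neighbourhoods. At an interior point $p\in M_i\setminus\partial M_i$ a small ball of $W$ is isometric to a ball of the Alexandrov space $M_i$, where the comparison holds; thus the whole difficulty sits on the gluing surface $S:=\partial M_1=\partial M_2$. I will record the comparison in differential form: $W$ has curvature $\ge\kappa$ near $p$ iff for every nearby $q$ the function $f_q:=\mathrm{md}_\kappa\circ\mathrm{dist}_q$ is $(1-\kappa f_q)$-concave along geodesics, where $\mathrm{md}_\kappa$ is the model primitive with $\mathrm{md}_\kappa'=\snk$ and $\mathrm{md}_\kappa''=\csk$. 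This pinpoints the task: to control the second-order behaviour of $\mathrm{dist}_q$ along a geodesic of $W$ at the instants where that geodesic, or a shortest path from $q$ to it, meets $S$.

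The decisive local input is the behaviour of geodesics of $W$ across $S$. A shortest path from $x\in M_1$ to $y\in M_2$ must cross $S$; on each side of a crossing it runs as a geodesic of the ambient $M_i$, and minimality forces the two incoming directions at a crossing point $s\in S$ to subtend total angle at least $\pi$ inside $W$. Heuristically this \emph{refraction with total angle at least $\pi$} is exactly the constraint that should keep $\mathrm{dist}_q$ no less concave than in the model. To make it rigorous one must identify the space of directions $\Sigma_pW$ for $p\in S$. The expectation, which I would prove first, is that $\Sigma_pW$ is the gluing of $\Sigma_pM_1$ and $\Sigma_pM_2$ along their common boundary $\Sigma_pS=\partial\Sigma_pM_1=\partial\Sigma_pM_2$. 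Since $\Sigma_pM_1$ and $\Sigma_pM_2$ are Alexandrov spaces of curvature $\ge1$ and of dimension $n-1$, this presents the statement as an induction on $n=\dim W$.

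Accordingly I would argue by induction on dimension, the case $n=1$ being elementary. In the inductive step, the statement applied to the $(n-1)$-dimensional gluings $\Sigma_pW$ gives that each has curvature $\ge1$; hence the tangent cone $T_pW$, a Euclidean cone over $\Sigma_pW$, has curvature $\ge0$. Correct tangent cones are necessary but not sufficient for the finite-scale bound, and to bridge the gap I would use the gradient exponential map $\gexp_p\colon T_pW\to W$. Built from gradient curves of $\mathrm{dist}_q$-type semiconcave functions, $\gexp_p$ sends the origin to $p$ and radial rays to quasigeodesics, and its key property is a comparison estimate asserting that it does not increase distances relative to the $\kappa$-model. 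Substituting the refraction condition at $S$ into this estimate should deliver the required $(1-\kappa f_q)$-concavity of $f_q$ as its geodesic crosses $S$.

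The main obstacle is the analysis along $S$ itself: one must show that the angle defect produced when a geodesic refracts off, or travels inside, $S$ always acts in favour of the comparison and never against it, and that this persists for configurations whose geodesics meet $S$ repeatedly or run within $S$ on a set of positive length. Establishing that the gradient flow of $\mathrm{dist}_q$ remains well defined and non-expanding (up to the model factor) as its integral curves strike the seam $S$ is where the substantive work lies. The special case $M_1=M_2$, $f=\mathrm{id}$, is Perelman's doubling theorem; there the reflection swapping the two copies and fixing $S$ supplies a global symmetry that simplifies these estimates, and a large part of the effort in the general case is to reproduce the same control in the absence of any such symmetry.
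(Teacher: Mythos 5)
Your scaffolding coincides with the one the paper uses (it proves this theorem as a special case of Theorem \ref{thm:self gluing}, following Petrunin): reduce to neighborhoods of seam points and globalize, induct on dimension through the glued spaces of directions, use a gradient-exponential map to pass from infinitesimal to local information, and exploit a refraction/antipodality condition at the seam. But the decisive estimate --- what you yourself call ``the main obstacle'' --- is exactly what your proposal leaves unproven, and the idea that makes it provable is absent from your plan. The paper never directly analyzes how a geodesic of $W=M_1\cup_f M_2$ refracts at $S$; that quantity is a priori intractable because the glued metric is defined as an infimum over chains touching the seam arbitrarily many times. Instead it approximates the glued metric by the $m$-predistances $|xy|_m$ (Lemma \ref{lem:predist}), in which chains may touch the seam at most $m$ times, and proves the concavity estimate $\bigl(\rho_\kappa(|p\gamma(t)|_m)\bigr)''+\kappa\,\rho_\kappa(|p\gamma(t)|_m)\le 1$ by a \emph{second} induction on $m$ (Lemma \ref{lem:m-dist comp}) --- and crucially, along minimal geodesics $\gamma$ of the seam's \emph{induced length metric}, not along geodesics of $W$, using the generalized Liberman theorem (Theorem \ref{thm:Liberman}) to know that such $\gamma$ are $\kappa$-quasigeodesics in each piece. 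Only after letting $m\to\infty$ does one conclude that $m$-shortest paths, hence limits of them, are $\kappa$-convex, so geodesics of $W$ are $\kappa$-quasigeodesics, and the proof closes with the quasigeodesic characterization (Theorem \ref{thm:qg characterization}). Without this double induction (on dimension \emph{and} on the number of seam contacts), your phrase ``substituting the refraction condition into this estimate'' has no concrete object to act on.

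There is a second, structural gap: your plan to ``prove first'' that $\Sigma_pW$ is the gluing of $\Sigma_pM_1$ and $\Sigma_pM_2$ along $\Sigma_pS$ is circular at that stage. Before $W$ is known to be an Alexandrov space, angles in $W$ need not exist and $\Sigma_pW$ is not known to be a reasonable space, so this identification cannot be an early step of the induction. The paper proves only the weaker, one-sided statement it actually needs: every blow-up limit $K$ of $(rW,p)$ admits a distance non-contracting, norm-preserving map $\ell:K\to T_{o_x}T_x^\sharp$ into the cone over the glued space of directions (Lemma \ref{lem:tangent cone}), obtained from the $1$-Lipschitz surjection $\gexp^\sharp$; an upper comparison of this kind suffices, and the exact identification of $\Sigma_pW$ comes for free only \emph{after} the theorem is proved. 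Relatedly, your heuristic ``total angle at least $\pi$'' at a crossing must be upgraded to exact antipodality against \emph{every} direction, $|\xi\zeta|+|\eta\zeta|=\pi$ for all $\zeta$ (Lemma \ref{lem:antipodal}), which is what the first-variation argument at inner vertices of $m$-shortest paths requires; proving that equality uses the inductive curvature bound on $\Sigma_x^\sharp$, Liberman's theorem, and the quasigeodesic hinge comparison, none of which appear in your outline.
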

This is a generalization of Perelman's doubling theorem, which states that for an Alexandrov space $M$ with boundary, its double $D(M)$ is also an Alexandrov space having the same lower curvature bound as $M$, where $D(M)$ is the metric space $M \cup_{\mathrm{id} : \partial M \to \partial M} M$ in the notation used in Theorem \ref{thm:Pet gluing}. 
Petrunin's gluing theorem is also true if the isometry $f$ is defined on components of the boundaries. 

In the present paper, we prove that Alexandrov's lower curvature bound is stable under gluing along {\it certain parts} of boundaries, possibly proper subsets of components of boundaries. 
It gives a generalization of Perelman's and Petrunin's theorems. 

\subsection{Easy examples}
Before stating results, let us observe that various nonnegatively curved surfaces are constructed from one rectangle paper. 
For instance, by gluing two opposite sides of a rectangle, we obtain an annulus or a M\"obius band.
The union $E$ of adjacency sides with induced length metric has a unique non-trivial isometric involution, and the metric space by closing $E$ via the isometry is a nonnegatively curved surface, like a wonky paper cup. 

\[
\begin{matrix}
\begin{matrix}
\includegraphics[bb=0 0 360 270,width=5em, height=4em]{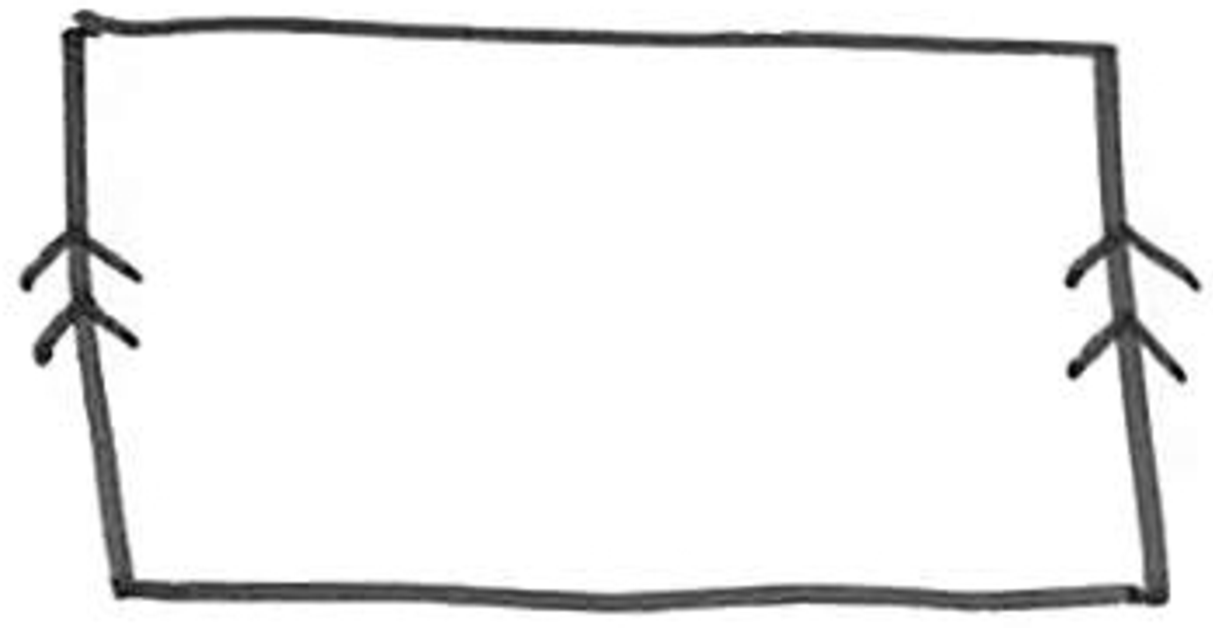} 
\end{matrix} &
\begin{matrix}
\includegraphics[bb=0 0 360 270,width=5em, height=4em]{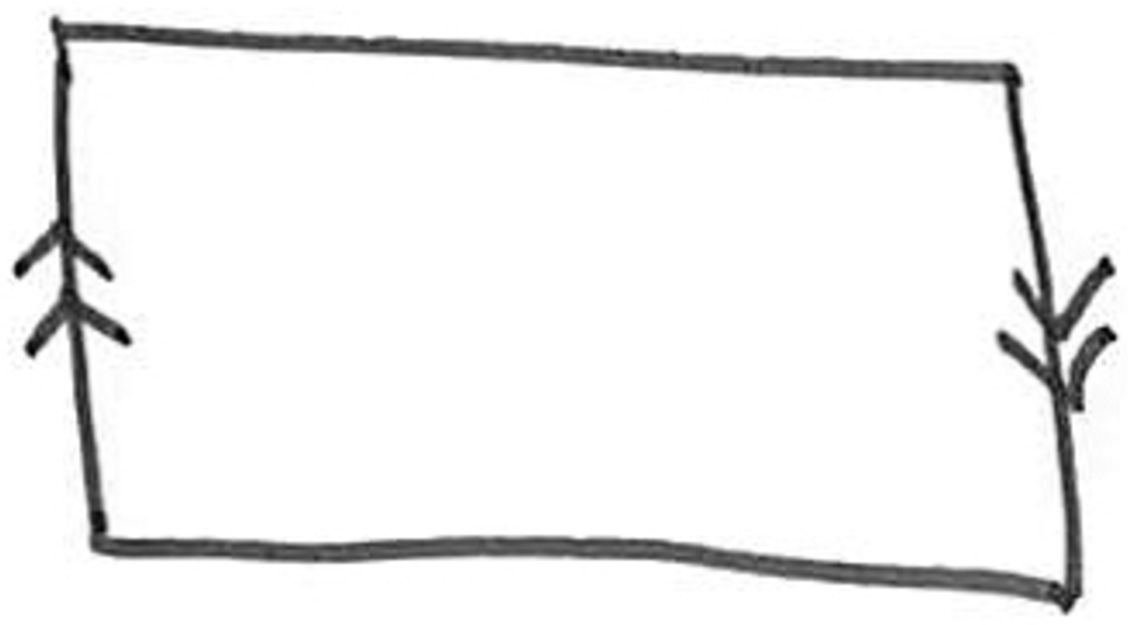}
\end{matrix} &
\begin{matrix}
\includegraphics[bb=0 0 360 270,width=5.6em, height=4em]{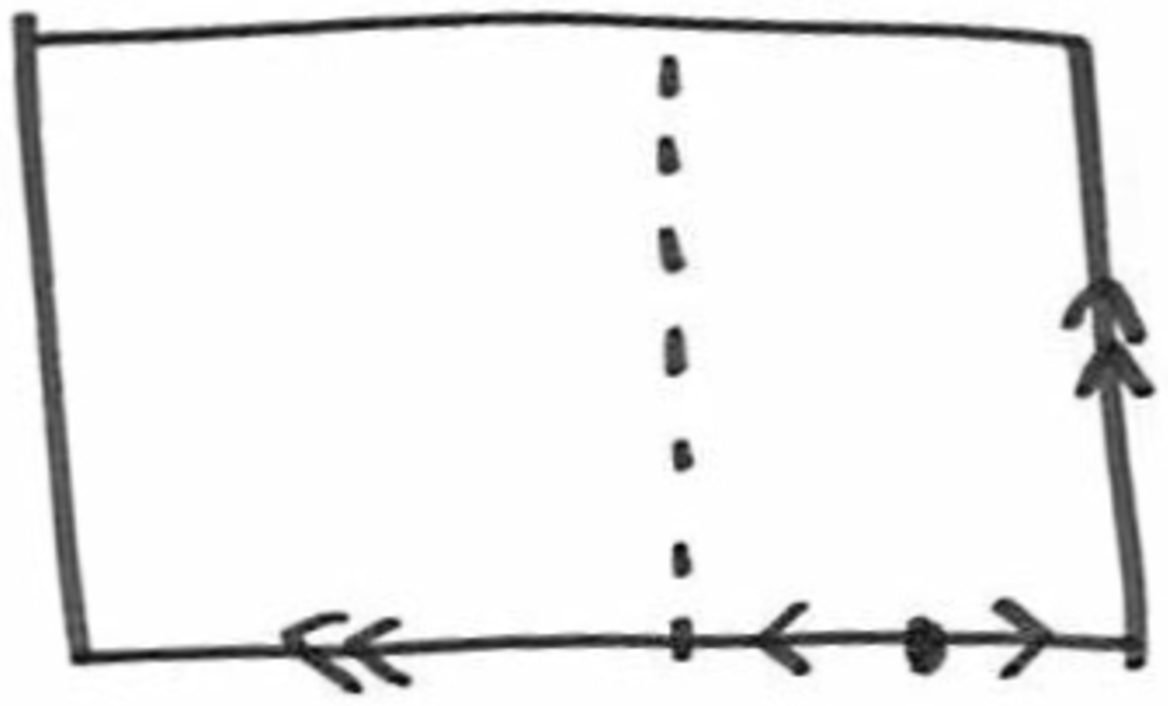}
\end{matrix}
= 
\begin{matrix}
\includegraphics[bb=0 0 360 270,width=5em, height=4em]{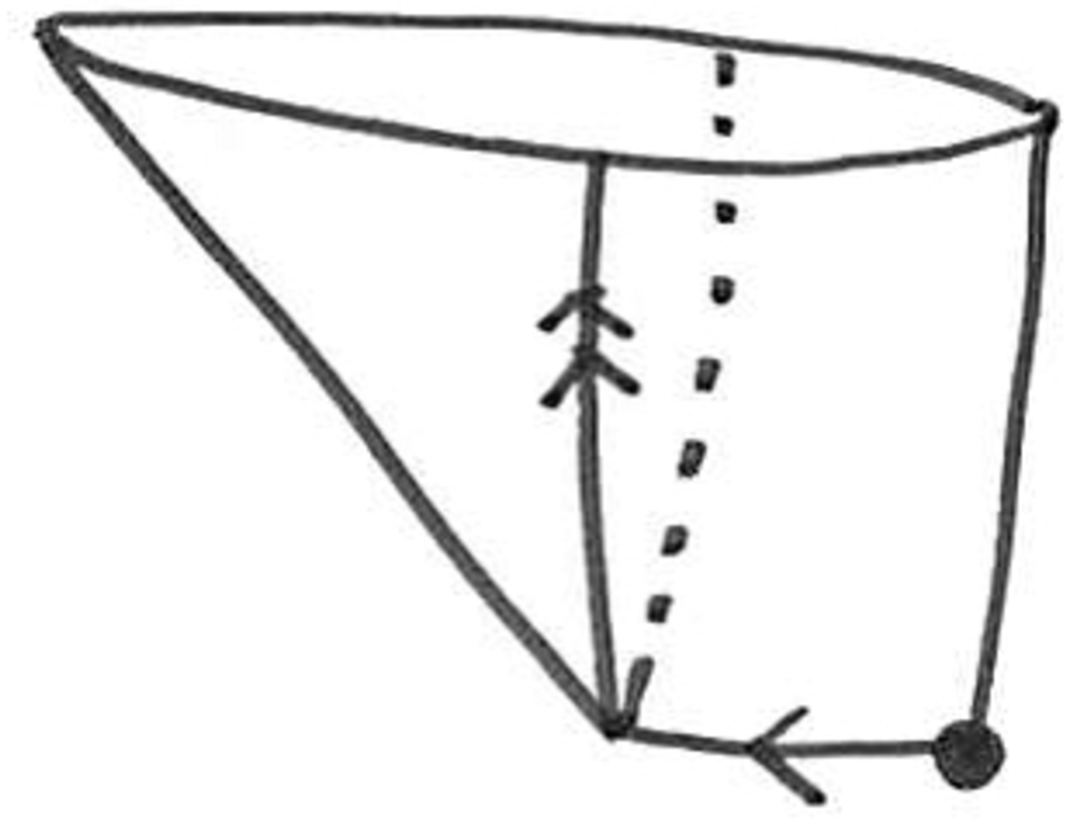} 
\end{matrix} \\
\footnotesize{\text{an annulus}} & \footnotesize{\text{a M\"obius band}} & \footnotesize{\text{a paper cup}}
\end{matrix}
\]

The boundary of a rectangle with the induced length metric is regarded as a circle, which have two kinds of non-trivial isometric involutions $\sigma$ and $\tau$. 
Here, $\sigma$ is a reflection with two fixed points and $\tau$ is a half-rotation of the circle. 
Then, the metric space obtained by closing the boundary of the rectangle via $\sigma$ (resp. via $\tau$) is a sphere (resp. a projective plane) of nonnegative curvature with metric singular points. 
Further, they are flat at almost all points.
For instance, we can obtain the following surfaces: 
\[
\begin{matrix}
&\begin{matrix}
\includegraphics[bb=0 0 360 270,width=5em, height=4em]{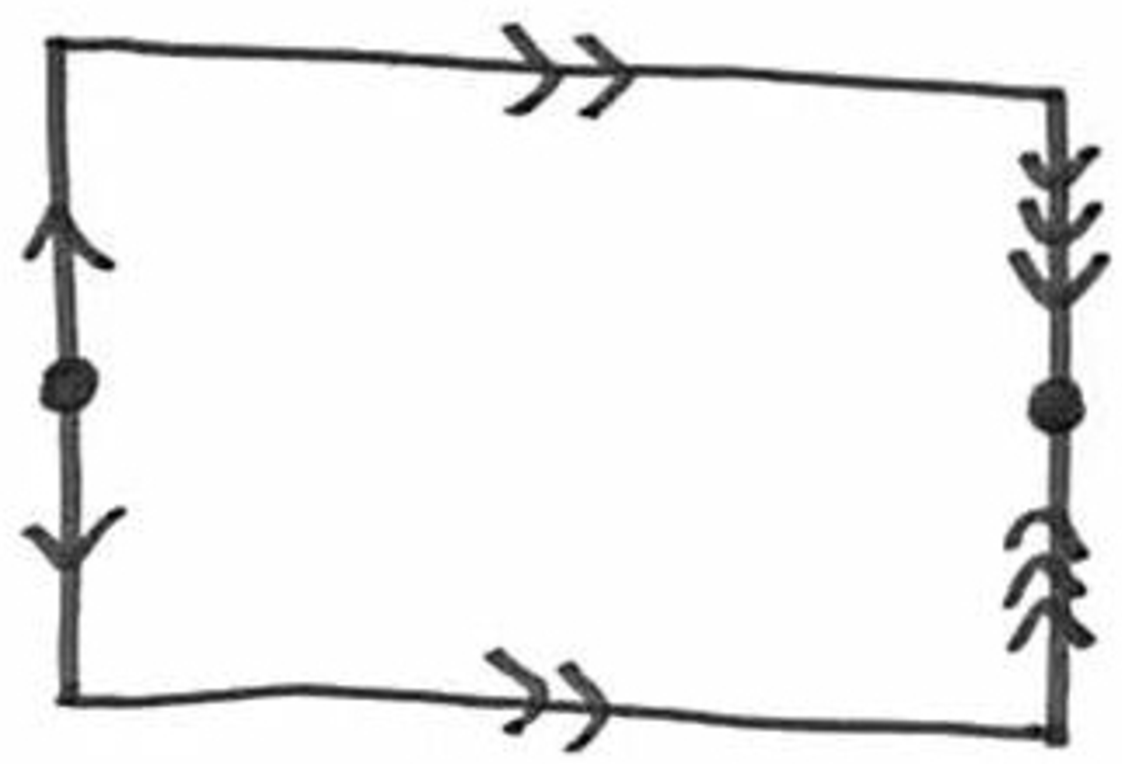}
\end{matrix}
=
\begin{matrix}
\includegraphics[bb=0 0 360 270,width=5em, height=2.3em]{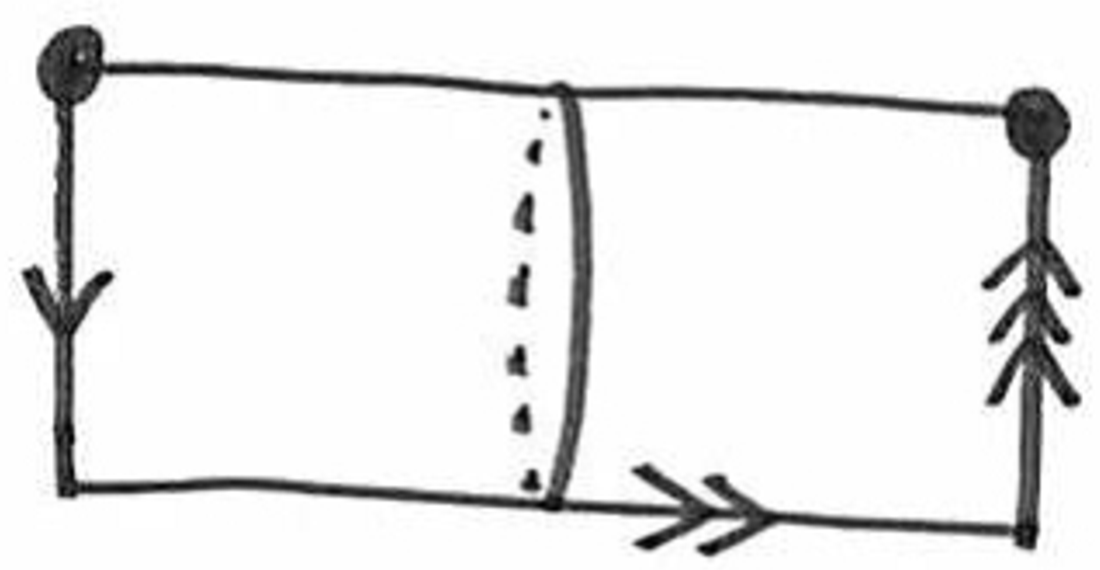}
\end{matrix} 
&\begin{matrix}
\includegraphics[bb=0 0 360 270,width=5em, height=4em]{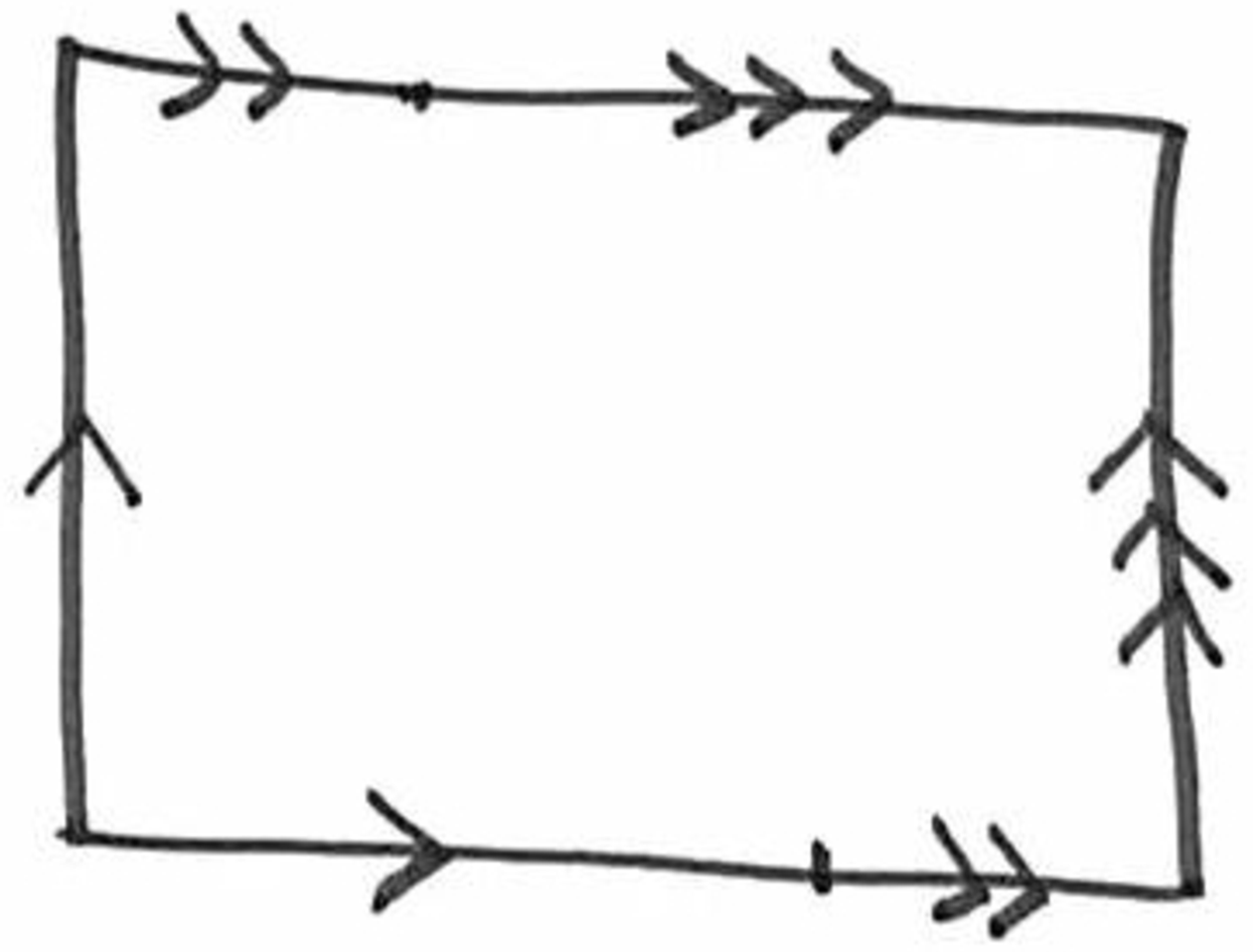}
\end{matrix}
&\begin{matrix}
\includegraphics[bb=0 0 360 270,width=5em, height=4em]{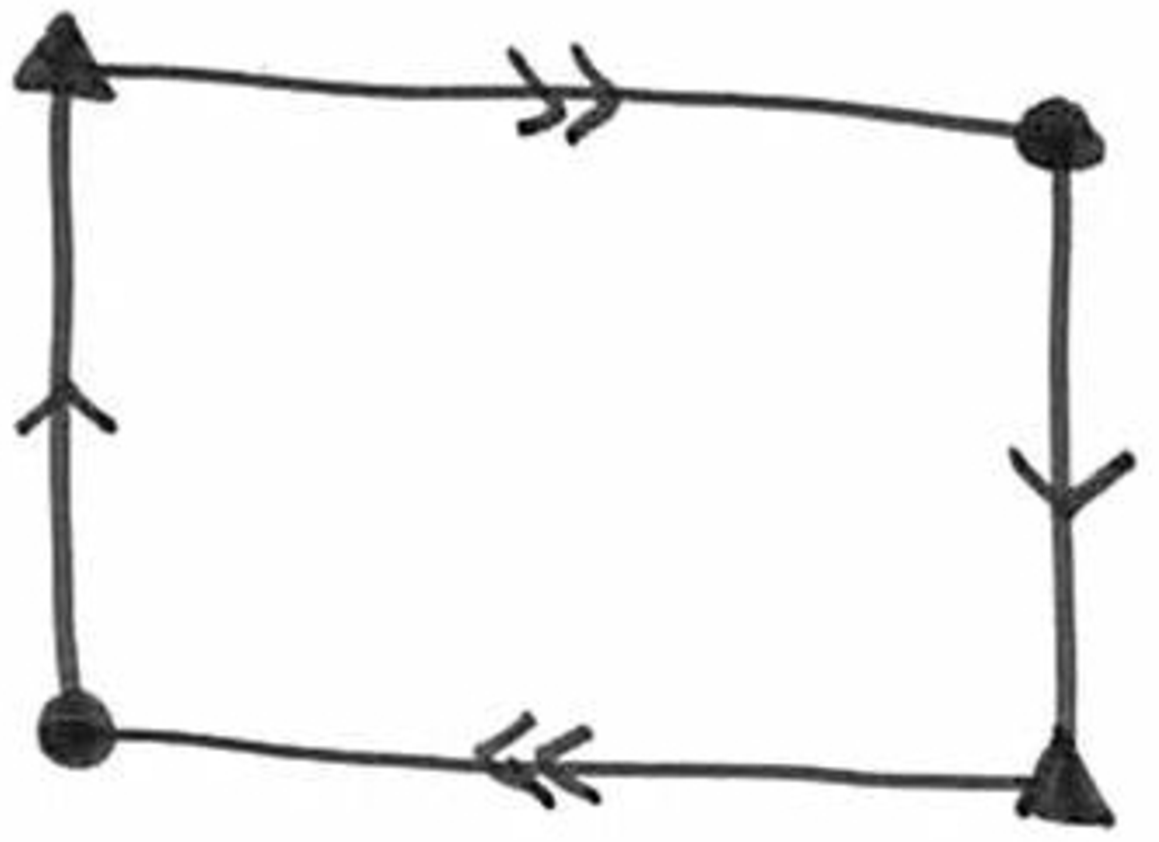}
\end{matrix}
\\
& \footnotesize{\text{a pillow case}} & \footnotesize{\text{a sphere}} \vspace{1em} &\footnotesize{\text{a real projective plane}}
\end{matrix}
\]

Further, by using two rectangles and by gluing sides (or the union of sides) of the same length, we also obtain nonnegatively curved surfaces. 

On the other hands, if we close a segment by an involution on it, where the endpoints of the segment are not corners, then we have a torn envelope: 
\[
\begin{matrix}
\includegraphics[bb=0 0 360 270,width=5em, height=4em]{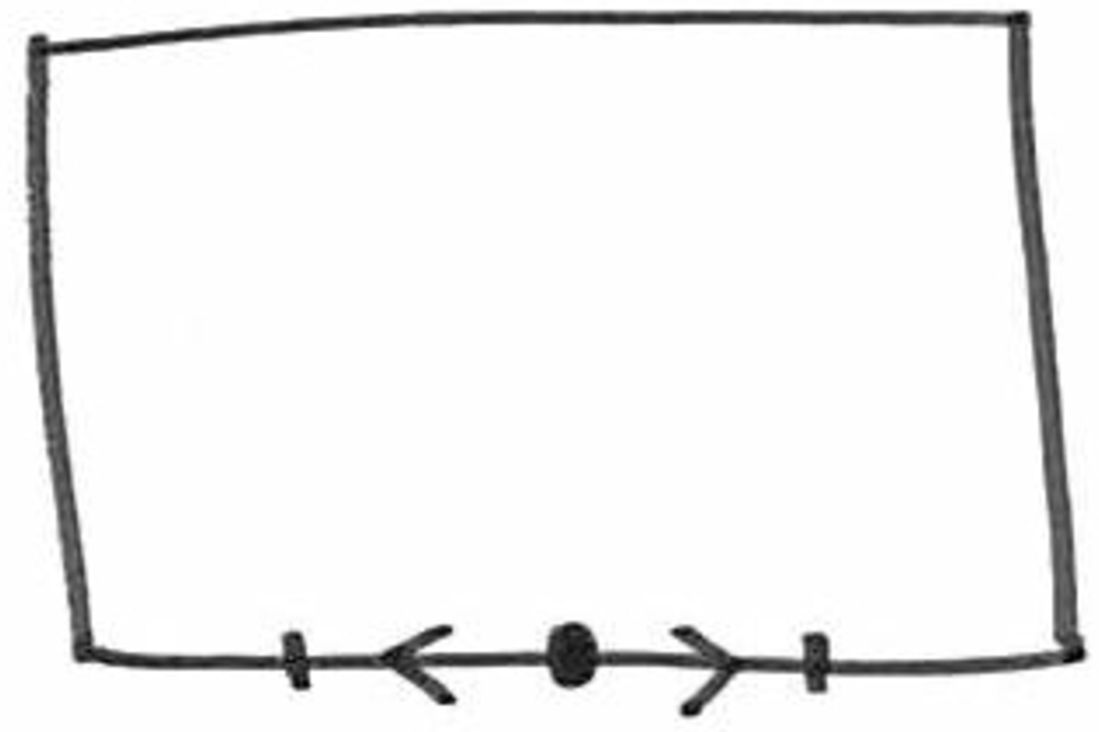}
\end{matrix}
=
\begin{matrix}
\includegraphics[bb=0 0 360 270,width=5em, height=4em]{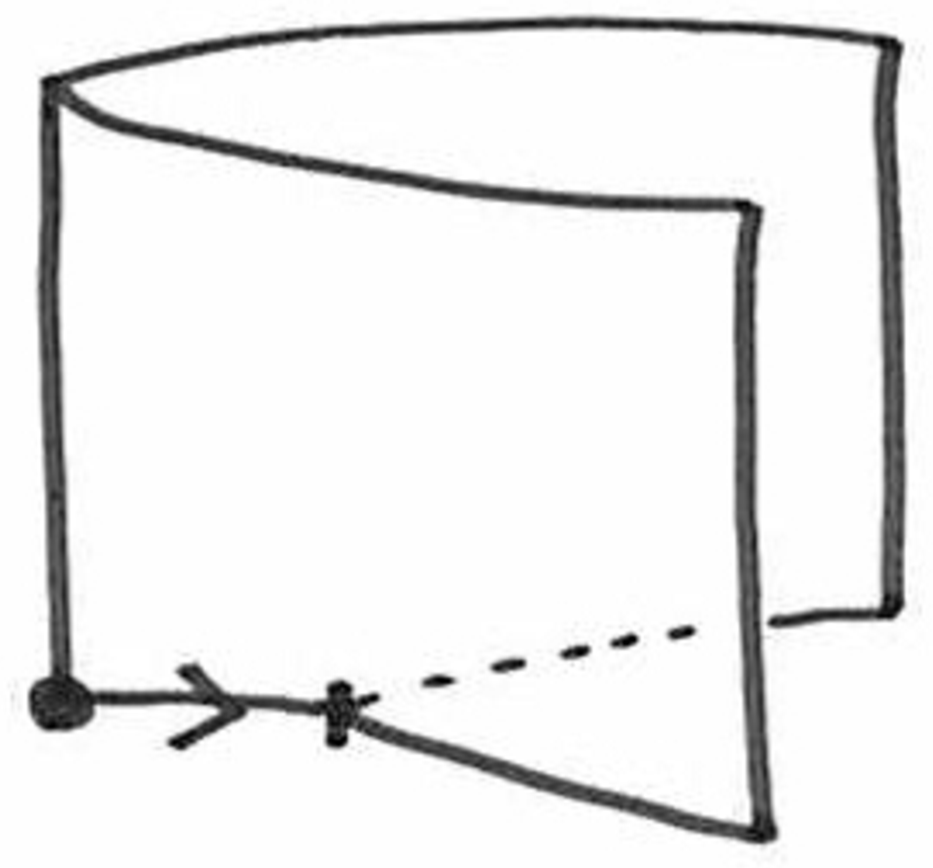}
\end{matrix}
\]
A torn envelope as above does not have a lower curvature bound at the torn point. 

In this paper, we rigorously formulate such phenomena for general Alexandrov spaces, and prove it.

\subsection{Partial gluing}
To state our results, the notion of extremal subsets is needed. 
For example, the union of any set of sides of a rectangle is extremal in the rectangle. 
The precise definition of extremal subsets in general Alexandrov spaces will be reviewed in Section \ref{sec:preliminary}.

Let us state our main results. 

\begin{theorem} \label{thm:self gluing}
Let $M$ be an Alexandrov space which is possibly disconnected and $E$ a codimension one extremal subset of $M$. 
Let $f$ be an isometric involution on $E$ with respect to the induced length metric. 
Then, the metric space $M_f$ obtained by closing $E$ in $M$ via $f$ is an Alexandrov space.  

Further, if $M$ has curvature $\ge \kappa$ for some $\kappa \in \mathbb R$ and $E$ is $\kappa$-extremal in $M$, 
and $M$ has at most two components when $M$ is of dimension one and $\kappa > 0$, 
then $M_f$ is an Alexandrov space of curvature $\ge \kappa$. 
\end{theorem}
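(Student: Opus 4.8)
The plan is to reduce the statement to a local, infinitesimal one and then to argue by induction on $n=\dim M$, using Petrunin's gluing theorem (Theorem~\ref{thm:Pet gluing}) to treat the part of $E$ that is moved by $f$ and the inductive hypothesis to treat the fixed-point set of $f$. First I would dispose of the soft points: equip $M_f$ with the quotient length metric coming from the identification $x\sim f(x)$ on $E$, and check that, because $E$ is closed and $f$ preserves the induced length metric, $M_f$ is a complete, locally compact geodesic space with $\dim M_f=\dim M$. Since curvature $\ge\kappa$ is a local property of such spaces (globalization theorem), it then suffices to exhibit, around each point of $M_f$, a neighborhood on which the comparison for curvature $\ge\kappa$ holds. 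The one genuinely global phenomenon occurs in dimension one with $\kappa>0$, where the local comparison is vacuous: there the conclusion reduces to the diameter bound $\diam\le\pi/\sqrt{\kappa}$ on each component of $M_f$, and the hypotheses that $E$ is $\kappa$-extremal and that $M$ has at most two components are precisely calibrated to force this length estimate. I would settle this base case by a direct computation: three or more one-dimensional pieces could be joined into a circle or arc of length exceeding the bound, which is exactly why the number of components must be restricted.

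For the inductive step I assume the theorem in dimension $n-1$ (with curvature bound $1$) and classify a point of $M_f$. If it lies over $M\setminus E$, a small ball is isometric to a ball of $M$ and nothing is to be shown. If it lies over a point $x\in E$ with $f(x)\ne x$, I would cut $M$ along $E$ near $x$ and $f(x)$ so as to present $E$ as a piece of boundary on each sheet, and observe that reglueing according to $f$ exhibits a neighborhood as a boundary-gluing of Alexandrov spaces with boundary; Theorem~\ref{thm:Pet gluing} then yields the comparison there. The essential point to verify here is that codimension-one $\kappa$-extremality of $E$ guarantees that exactly two sheets are brought together, so that one is in Petrunin's two-sided situation rather than a ``book'' of three or more pages, which would already fail to be Alexandrov.

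The new case is a \emph{fold point}, i.e.\ one lying over a fixed point $x$ of $f$. Here $E$, with its induced length metric, is itself an Alexandrov space of curvature $\ge\kappa$, and $f$ is an isometry of it fixing $x$; hence $f$ differentiates to an isometric involution $df_x$ of the space of directions $\Sigma_x E$, which sits in $\Sigma_x M$ as a codimension-one extremal subset, compatibly with the bound $1$ because $E$ is $\kappa$-extremal. The space of directions of $M_f$ at the fold point is then the self-gluing $(\Sigma_x M)_{df_x}$, to which the inductive hypothesis applies, so $\Sigma_x M_f$ has curvature $\ge 1$; moreover the two-component condition needed to invoke the one-dimensional case is automatic here, since spaces of directions of positive dimension are connected. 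Because self-gluing commutes with coning, the Euclidean tangent cone of $M_f$ at the fold point equals the self-gluing of the tangent cone of $M$ and therefore has curvature $\ge 0$.

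The hard part will be the passage from this infinitesimal information to an honest local comparison near a fold point: curvature $\ge 1$ of every space of directions does not by itself force the lower bound on the space. To bridge this gap I would adapt the gradient-curve and quasigeodesic machinery of Perelman and Petrunin that underlies Theorem~\ref{thm:Pet gluing}. Geodesics of $M_f$ meeting $E$ correspond to quasigeodesics of $M$ that reflect along the extremal subset $E$, and the comparison is obtained by developing such broken trajectories against the model surface and invoking monotonicity of comparison angles. What is new, and where the main difficulty lies, is to make this reflection argument equivariant with respect to $f$ and to keep it valid on the fixed-point set, where the two reflected branches are interchanged by $df_x$: one must control the first variation of energy there so that no tearing (the torn-envelope phenomenon) occurs, which is exactly what codimension-one $\kappa$-extremality of $E$ secures. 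Once the local comparison is in hand at every point, the globalization theorem upgrades it to curvature $\ge\kappa$ on each component of $M_f$, completing the induction; the unqualified first assertion of the theorem follows by the same scheme, tracking only the existence of some local lower bound rather than a fixed $\kappa$.
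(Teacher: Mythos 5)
Your reduction of the non-fixed-point case to Theorem \ref{thm:Pet gluing} does not work, and this is the central gap. A codimension one extremal subset $E$ lies in $\partial M$, but it is in general only a \emph{proper} part of (a component of) the boundary -- a single side of a rectangle, say -- and this is precisely the situation the theorem is meant to cover beyond Petrunin's result. Near a point $x\in E$ with $f(x)\neq x$, any neighborhood $U$ of $x$ you cut out is an Alexandrov space whose boundary consists of $E\cap U$ \emph{plus} the artificial boundary created by the cut (in the rectangle example, a half-disk whose boundary is a diameter plus an arc); the gluing identifies only $E\cap U$ with $E\cap V$, not full boundaries, so the hypothesis of Theorem \ref{thm:Pet gluing} (an isometry $\partial M_1\to\partial M_2$ in the length metric) is simply not available, and there is no canonical way to manufacture it. There are also secondary problems with this reduction (small metric balls in an Alexandrov space need not themselves be Alexandrov spaces, and the induced length metric on $E\cap U$ differs from that of $E$ near the cut). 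The paper does not split into cases at all: every point of $E_f$, fixed or not, is treated uniformly by the $m$-predistance machinery, with the dimension induction entering only through the statement that the glued space of directions $\Sigma_x^\sharp$ (a self-gluing if $f(x)=x$, a gluing of $\Sigma_x M$ and $\Sigma_{f(x)}M$ otherwise) has curvature $\ge 1$.

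At fold points your proposal assumes what in effect must be proved: you assert that the space of directions of $M_f$ at $[x]$ \emph{is} the self-gluing $(\Sigma_x M)_{f_x'}$ and that the tangent cone of $M_f$ is the glued tangent cone. A priori one does not even know that $M_f$ has well-defined spaces of directions with controlled angles -- that is part of the Alexandrov property being established, so this identification is circular. The paper proves only the weaker Lemma \ref{lem:tangent cone}: using the gradient-exponential maps one gets a $1$-Lipschitz surjection $\gexp^\sharp:T_x^\sharp\to B(x,D_\kappa/2)\subset M_f$ preserving distance to the base point, hence a distance non-contracting, norm-preserving map from any blow-up limit of $M_f$ into $T_{o_x}T_x^\sharp$; this suffices. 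Finally, the passage you label ``the hard part'' and propose to settle by ``adapting the machinery'' is exactly where the paper's content lies: antipodality of the incoming and outgoing directions at inner vertices of $m$-shortest paths (Lemma \ref{lem:antipodal}), the concavity comparison for $\rho_\kappa$ of the $m$-predistance along geodesics of $E_f$ proved by a double induction on dimension and on $m$ (Lemma \ref{lem:m-dist comp}, which for $\kappa\neq 0$ requires the new Lemmas \ref{lem:sine comp} and \ref{lem:contradiction1}), and the conclusion that limits of $m$-shortest paths are $\kappa$-quasigeodesics so that Theorem \ref{thm:qg characterization} applies. Without these steps carried out, the proposal establishes only the infinitesimal statement, which, as you yourself note, does not imply the local curvature bound.
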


Here, the restriction of the number of components is needed when the dimension of $M$ is one and is positively curved (see Remark \ref{rem:dim 1 case}). 
The canonical length metric on $M_f$ in the result of Theorem \ref{thm:self gluing} will be defined in Section \ref{sec:preliminary}.
A direct corollary of Theorem \ref{thm:self gluing} is the following. 

\begin{corollary} \label{cor:partial gluing}
Let $M_1$ and $M_2$ be the same dimensional connected 
Alexandrov spaces of curvature $\ge \kappa$ and $E_1 \subset M_1$ and $E_2 \subset M_2$ their codimension one $\kappa$-extremal subsets. 
Assume that there is an isometry $f : E_1 \to E_2$ in the length metric. 
Then, the metric space $M_1 \cup_f M_2$ obtained by gluing $M_1$ and $M_2$ along $E_1$ and $E_2$ via $f$ is an Alexandrov space of curvature $\ge \kappa$. 
\end{corollary}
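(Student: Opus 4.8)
The plan is to deduce this directly from Theorem \ref{thm:self gluing} by realizing the gluing of two spaces as the self-gluing of a single, possibly disconnected, Alexandrov space. First I would set $M := M_1 \sqcup M_2$, the disjoint union, regarded as a metric space in which each $M_i$ retains its original metric and points lying in distinct components are joined by no admissible path. Since both the lower curvature bound and the dimension are verified triangle-by-triangle and hence componentwise, $M$ is an $n$-dimensional Alexandrov space of curvature $\ge \kappa$, where $n = \dim M_1 = \dim M_2$.

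Next I would put $E := E_1 \sqcup E_2 \subset M$. Because extremality is a local, componentwise condition, $E$ is $\kappa$-extremal in $M$; and since each $E_i$ is codimension one in $M_i$, the subset $E$ is codimension one in $M$. I would then define $\hat f : E \to E$ by $\hat f|_{E_1} = f$ and $\hat f|_{E_2} = f^{-1}$. This map is manifestly an involution, and because no length-minimizing curve can leave a component, the induced length metric on $E_i \subset M$ agrees with the induced length metric on $E_i \subset M_i$; consequently $\hat f$ is an isometric involution of $E$ with respect to the induced length metric.

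Now I would apply Theorem \ref{thm:self gluing} to the pair $(M, E)$ together with the involution $\hat f$. The curvature and $\kappa$-extremality hypotheses have just been checked. The only further condition to monitor is the one restricting the number of components in the one-dimensional, positively curved case; but $M = M_1 \sqcup M_2$ has exactly two components since $M_1$ and $M_2$ are connected, so that condition holds automatically. Theorem \ref{thm:self gluing} therefore yields that $M_{\hat f}$ is an Alexandrov space of curvature $\ge \kappa$.

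It remains to identify $M_{\hat f}$ with $M_1 \cup_f M_2$. Closing $E$ in $M$ via $\hat f$ identifies each $x \in E_1$ with $\hat f(x) = f(x) \in E_2$ and conversely, which is exactly the identification prescribed by $f$ in the definition of $M_1 \cup_f M_2$; since both are equipped with the canonical length metric inherited from $M$, the two spaces coincide as metric spaces, and the conclusion follows. As the argument is purely a matter of transporting the hypotheses to the disjoint union, I do not anticipate a serious obstacle; the only step demanding genuine care is the verification that the induced length metrics on $E$ and on each $E_i$ coincide, so that $\hat f$ is a bona fide isometry of $E$ and the two quotient constructions are literally the same.
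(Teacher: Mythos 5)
Your proposal is correct and is essentially identical to the paper's own proof: the paper likewise forms $M = M_1 \sqcup M_2$, $E = E_1 \sqcup E_2$, extends $f$ to the involution $\bar f$ (with $\bar f = f^{-1}$ on $E_2$, exactly as in your $\hat f$), and applies Theorem \ref{thm:self gluing}, noting $M_1 \cup_f M_2 = M_{\bar f}$. Your additional checks (the two-component condition in the one-dimensional positively curved case, and the agreement of the induced length metrics on each $E_i$) are details the paper leaves implicit, so there is nothing to add or correct.
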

This is a generalization of Theorem \ref{thm:Pet gluing}, because Corollary \ref{cor:partial gluing} allows that $E_i$ can be taken to be proper subsests of (a component of) the boundary $\partial M_i$.
We will note that Alexandrov spaces obtained by gluing as in Corollary \ref{cor:partial gluing} naturally appear in the collapsing theory of Riemannin manifolds and Alexandrov spaces with a uniform lower curvature bound (See Example \ref{ex:collapse} and \cite{MY:bdry}). 

\vspace{1em}
\noindent
{\bf Organization.}
The organization of this paper is as follows. 
In \S \ref{sec:preliminary}, we recall basics of length metric spaces, Alexandrov spaces, and their extremal subsets. 
In \S \ref{sec:proof}, we give proofs of Theorem \ref{thm:self gluing} and Corollary \ref{cor:partial gluing}. 

\section{Preliminaries} \label{sec:preliminary}
Let $X = (X, |\cdot, \cdot|)$ denote an abstract metric space possibly having infinite distance. 
Soon, $X$ is restricted to be a length space. 

\subsection{Basics of length spaces}
For a continuous curve $\gamma : [a,b] \to X$ in a metric space $X$, its length is defined as 
\[
L(\gamma) = \sup_{a = t_0 < \dots < t_m= b} \sum_{i=1}^m |\gamma(t_i) \gamma(t_{i-1})|
\]
which is determined independently on the choice of parametrizations of $\gamma$. 
If $\gamma$ is Lipschitz, then its length $L(\gamma)$ is finite. 
For a subset $F \subset X$, the {\it length metric} on $F$ associated with the metric on $X$ is defined as 
\[
|xy|_F := \inf L(\gamma)
\]
for all $x, y \in F$, 
where $\gamma$ runs over continuous curves $\gamma : [0,1] \to F$ with $\gamma(0) = x$ and $\gamma(1) = y$.
If $x$ and $y$ can not be connected by a continuous curve in $F$ of finite length, then $|xy|_F := \infty$.
When $X$ satisfies $|xy| = |xy|_X$ for all $x, y \in X$, it is called a {\it length space}.
Note that for points $x, y$ in a length space, 
they satsify $|xy| < \infty$ if and only if they are contained in the same connected component. 

From now on, $X$ denotes a proper length space possibly having infinite distances. 
Here, a metric space is said to be proper if any closed metric ball is compact.
Then, $X$ becomes automatically a geodesic space. 
Namely, for any $x,y \in X$ wtih $|xy| < \infty$, there is a curve $\gamma : [0,|xy|] \to X$ such that $\gamma(0) = x$, $\gamma(|xy|) = y$ and $L(\gamma) = |xy|$.
Such a $\gamma$ is called a minimal geodesic between $x$ and $y$ and is denoted by $xy$.
In this paper, we always assume that every minimal geodesic is parametrized by the arclength.
That is, any minimal geodesic is just an isometric embedding from an interval to a metric space. 

Let $\Sigma$ be a metric space. 
For $\kappa \in \mathbb R$, the {\it $\kappa$-cone} $C^\kappa \Sigma$ over $\Sigma$ is defined as follows.
Let $D_\kappa := \pi / \sqrt \kappa$ if $\kappa > 0$ and  $D_\kappa := + \infty$ if $\kappa \le 0$. 
Let a function $\mathrm{sn}_\kappa : [0, D_\kappa) \to \mathbb R$ be the unique solution to an ODE
\[
\mathrm{sn}_\kappa'' + \kappa\, \mathrm{sn}_\kappa = 0, \mathrm{sn}_\kappa(0) = 0, \mathrm{sn}_\kappa'(0) = 1.
\]
We set $\mathrm{cs}_\kappa(t) = \mathrm{sn}_\kappa'(t)$.
Consider the product $[0, D_\kappa / 2) \times \Sigma$. 
For two points $(a,\xi)$, $(b,\eta) \in [0,D_\kappa/2) \times \Sigma$, we define the $\kappa$-cone distance $|(a,\xi),(b,\eta)|$ by
\[
\csk |(a,\xi),(b,\eta)| = \csk a \, \csk b + \kappa\, \snk a \, \snk b \cos (\min \{|\xi \eta|, \pi\})
\]
if $\kappa \neq 0$, and by 
\[
|(a,\xi),(b, \eta)|^2 = a^2 + b^2 - 2 a b \cos (\min \{|\xi \eta|, \pi\})
\]
if $\kappa = 0$.
The $0$-cone distance is equal to the limit of $\kappa$-cone distance as $\kappa \to 0$.
The metric completion of $[0,D_\kappa /2) \times \Sigma$ with respect to the $\kappa$-cone distance is denoted by $C^\kappa \Sigma$, which is the $\kappa$-cone over $\Sigma$.
In particular, all points in $\{0\} \times \Sigma$ are identified as a one point in $C^\kappa \Sigma$, which is called the vertex of the $\kappa$-cone and is denoted by $o$.
A point in $C^\kappa \Sigma$ is called a vector, and if it is represented by $(a,\xi)$, then it is written as $a \xi$.
The norm $|v|$ of a vector $v = a \xi$ is defined as $|v| := |v o| = a$.
The $0$-cone is usually called the Euclidean cone, and the $1$-cone is the spherical join to a one-point space.
If $\Sigma$ is a connected length metric space of diameter at most $\pi$ or a space consisting of two points of distance $\pi$, then its $\kappa$-cone $C^\kappa \Sigma$ becomes a length metric space. 

\subsection{Canonical length metrics on glued spaces} \label{subsec:length}
Let $X = (X, |\cdot,\cdot|_X)$ be a proper geodesic space and $E$ its closed subset. 
We denote by $|\cdot,\cdot|_E$ the length metric on $E$ induced from the original metric $|\cdot,\cdot|_X$. 
Let us impose the following condition ($\spadesuit$) on $(X,E)$. 
\begin{itemize}
\item[($\spadesuit$)] The original metric and the length metric on $E$ are locally bi-Lipschitz, that is, for each $x \in E$, there exist $r> 0$ and $C \ge 1$ such that $|yz|_X \le |yz|_E \le C |yz|_X$ for all $y,z \in B(x,r) \cap E$, where $B(x,r)$ is the closed ball in the original metric. 
\end{itemize}
Further, let us fix an isometric involution $f$ on $(E, |\cdot,\cdot|_E)$, where $f$ may be trivial. 

Let us consider an equivalent relation on $X$ generated by $E \ni x \sim f(x) \in E$, and the quotient $X_f := X /\!\! \sim$ as a set. 
Let $\pi : X \to X_f$ be the projection. 
In \cite[\S 3]{BBI}, it was introduced that an equivalent relation on a general metric space induces a psuedo-distance on the space as follows. 
We use such a procedure in our case. 
For $x, y \in X$, we set 
\begin{equation} \label{eq:length}
|xy| = \inf \sum_{i=0}^k |x_i y_i|_X
\end{equation}
where the infimum runs over all choices of $\{x_i\}$ and $\{y_i\}$ such that $x_0 = x$, $y_k =y$, $k \in \mathbb Z_{\ge 0}$ with $y_{i-1} \sim x_i$ for all $i = 1, \dots, k$.

\begin{lemma} \upshape \label{lem:length}
Let $X$ be a proper geodesic space and $E$ a closed subset satisfying ($\spadesuit$). 
Let $f$ be an isometric involution on $E$ with respect to the length metric. 
Then, we have 
\begin{enumerate}
\item The equivalent relation on $X$ defined as $|xy|=0$ is equal to the equivalent relation $\sim$ induced from $(E,f)$, that is, $|xy|=0$ if and only if $x \sim y$ for all $x, y \in X$.
\item The topology on $X_f$ induced from the length metric \eqref{eq:length} coincides with the quotient topology on $X_f$.
\item The metric space $(X_f, |\cdot,\cdot|)$ is a proper geodesic space.
\end{enumerate}
\end{lemma}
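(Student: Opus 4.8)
The plan is to reduce all three assertions to a single local estimate comparing the glued pseudo-distance $|\cdot,\cdot|$ of \eqref{eq:length} with the ambient distance $|\cdot,\cdot|_X$ measured to the (at most two-point) fiber of $\pi$. For $p \in X$ write $[p] := \pi^{-1}(\pi(p))$, which equals $\{p, f(p)\}$ when $p \in E$ and $\{p\}$ otherwise. First one notes that $|\cdot,\cdot|$ is constant on $\sim$-classes in each variable (prepend or append a zero-length jump), so it descends to $X_f$, and that the one-term chain ($k=0$) gives $|xy| \le |x'y|_X$ for every $x' \in [x]$, hence $|xy| \le \min_{x' \in [x]} |x'y|_X$; in particular $\pi$ is $1$-Lipschitz. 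The heart of the matter is the reverse, local bound: for each $p$ there are $\rho_0 > 0$ and $C \ge 1$ with
\begin{equation} \label{eq:core}
\min_{p' \in [p]} |p'q|_X \le C\, |pq| \qquad \text{whenever } |pq| < \rho_0.
\end{equation}

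To prove \eqref{eq:core} I would take an almost optimal chain $x_0 = p, y_0, x_1, \dots, x_k, y_k = q$ realizing a value close to $|pq|$ and \emph{reduce} it: whenever an identification $y_{i-1} \sim x_i$ is trivial (in particular whenever $y_{i-1} \notin E$) I merge the two adjacent segments via the triangle inequality in $X$, so that afterwards every surviving identification is genuine, i.e.\ $y_{i-1}, x_i \in E$ with $x_i = f(y_{i-1})$; thus all ``jump points'' $y_0, x_1, y_1, \dots$ lie in $E$. If $p \notin E$ this forces $k = 0$ for short chains (since $y_0 \to p \notin E$ while $y_0 \in E$ is impossible for the closed set $E$), giving $|pq|_X$ small directly. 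If $p \in E$, the key is a \emph{no-wandering} estimate proved by induction on $i$: because $f$ is an isometry of $(E, |\cdot,\cdot|_E)$ the jumps preserve $E$-length-distances, while condition ($\spadesuit$) at the two points $p$ and $f(p)$ converts each small ambient segment length $\ell_i = |x_iy_i|_X$ into an $E$-length length $|x_iy_i|_E \le C\,\ell_i$; one shows that every jump point stays within $E$-distance $C\sum_j \ell_j$ of $\{p, f(p)\}$, the parity of $i$ selecting which. Since $\sum_j \ell_j$ is close to $|pq| < \rho_0$, choosing $\rho_0$ below the bi-Lipschitz radii at $p$ and $f(p)$ keeps the whole chain inside the two good balls, so the common constant $C = \max\{C_p, C_{f(p)}\}$ is legitimately used at every step; pushing the endpoint $q$ through ($\spadesuit$) one last time yields \eqref{eq:core}.

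Granting \eqref{eq:core}, the three statements follow quickly. For (1): if $|pq| = 0$ then $\min_{p' \in [p]} |p'q|_X = 0$, so $q \in [p]$, i.e.\ $p \sim q$, and the converse is the zero-length chain through $p \sim f(p)$. For (2): \eqref{eq:core} together with its converse squeezes each $|\cdot,\cdot|$-ball about $\pi(p)$ between the $\pi$-images of ambient balls about the points of $[p]$, so the length-metric topology and the quotient topology have the same neighborhood bases (continuity of $\pi$ gives one inclusion, \eqref{eq:core} the other). For (3): the same squeezing puts each small closed ball $\bar B_{|\cdot,\cdot|}(\pi(p), \rho)$ inside $\pi\big(\bar B_X(p, C\rho) \cup \bar B_X(f(p), C\rho)\big)$, a continuous image of a compact set (here properness of $X$ enters), hence compact; so small closed balls are compact. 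A Cauchy sequence eventually lies in one such ball and therefore subconverges, making $(X_f, |\cdot,\cdot|)$ complete as well as locally compact; and $|\cdot,\cdot|$ is intrinsic by construction (chains may be subdivided), so $X_f$ is a complete, locally compact length space, whence the Hopf--Rinow--Cohn-Vossen theorem renders it proper and geodesic.

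The step I expect to be the main obstacle is the no-wandering estimate inside \eqref{eq:core}, precisely because chains may a priori contain \emph{unboundedly many} jumps spread across $E$, so that the merely pointwise bi-Lipschitz constant of ($\spadesuit$) threatens to degenerate along a long chain. The resolution above---bounding the $E$-positions of all jump points by a single cumulative error and thereby confining them to the two fixed bi-Lipschitz balls around $p$ and $f(p)$---is exactly what makes a pointwise hypothesis suffice, and carrying out this induction with a uniform constant is the technical core of the lemma.
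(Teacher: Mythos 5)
Your proposal is correct and takes essentially the same approach as the paper: your core estimate $\min_{p' \in [p]} |p'q|_X \le C\,|pq|$ is exactly the paper's ball inclusion \eqref{eq:open map}, and your mechanism for proving it---reflecting chain segments through the $E$-length isometry $f$ and controlling each segment by ($\spadesuit$) in the two fixed good balls around $x$ and $f(x)$---is the same reflection computation the paper performs. The only difference is completeness: the paper carries out that computation just for a chain with a single nontrivial jump and then asserts (1)--(3), whereas your parity-tracking no-wandering induction and the Hopf--Rinow deduction spell out the general-chain case and the passage to the three conclusions.
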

\begin{proof}
From the definition, $|\cdot,\cdot| \le |\cdot,\cdot|_X$ holds. 
Further, for every $x \in X \setminus E$, letting $r = |xE|$, the restriction of two distances $|\cdot,\cdot|_X$ and $|\cdot,\cdot|$ to $U(x,r)$ coincide, where $U(x,r)$ denotes the open ball centered $x$ of radius $r$ in the original metric. 
From the assumption ($\spadesuit$), for any $x \in E$, there exist $r > 0$ and $C \ge 1$ such that 
\begin{equation} \label{eq:open map}
U^f(\{x, f(x)\},r) \subset U(\{x,f(x)\}, Cr)
\end{equation}
holds, where $U^f(y,s)$ denotes the open ball centered at $y$ of radius $s$ with respect to the induced distance $|\cdot,\cdot|$. 
Indeed, by the condition ($\spadesuit$), there exist $r > 0$ and $C \ge 1$ such that for all $y,z \in U(\{x,f(x)\},r) \cap E$, we have $|yz|_E \le C|yz|_X$. 
Let $y \in U^f(x, r)$. 
Then, there exist sequences of points $\{x_i\}, \{y_i\}$ such that $x_i, y_i \in E$ for all $1 \le i \le k-1$ and $y_{i-1} \sim x_i$ for all $1 \le i \le k$, and that $x=x_0$, $y=y_k$ and 
\[
\sum_{i=0}^k |x_iy_i|_X < r
\]
hold. 
If $y_{i-1} = x_i$ for every $1 \le i \le k$, then by the triangle inequality, we have $|xy|_X < r$. 
So, we may assume that there is $i$ with $1 \le i \le k$ such that $y_{i-1} \neq x_i$. 
Then, $k \ge 1$. 
We only consider the case $y_0 \neq x_1$ and $y_1 = y$.  
Hence, we have 
\[
|x_0 y_0|_X + |x_1y_1|_X < r. 
\]
We note that 
\[
|f(x_0)f(y_0)|_X \le C |f(x_0)f(y_0)|_E = C |x_0y_0|_E \le C|x_0y_0|_X
\]
Therefore, we have 
\[
|f(x)y|_X \le |f(x_0)f(y_0)|_X + |x_1 y_1|_X < C r.
\]
Thus, we obtain \eqref{eq:open map}.
These imply (1), (2) and (3). 
\end{proof}

Let $\pi : X \to X_f$ be the quotient map. 
From now on, each point $x \in X_f$ is considered as the subset $x = \pi^{-1}(x) \subset X$ 
and the image of $E$ under the projection $\pi$ is denoted by $E_f \subset X_f$. 
Further, for a point $x \in X$, we use $x$ the same symbol to indicate its equivalent class $x = \pi(x) \in X_f$. 
Under this convention, as in \cite{Pet:appl}, we consider a concept to approximate the distance defined in \eqref{eq:length}. 
For $x,y \in X_f$ and $m \in \mathbb Z_{\ge 0}$, the $m$-{\it predistance} between $x$ and $y$ is defined by 
\[
|xy|_m = \inf \sum_{i=0}^k |p_i p_{i+1}|_X
\]
where the infimum runs over points $\{p_i\}_{i=0}^{k+1}$ satisfying $p_0=x$, $p_{k+1}=y$, $p_i \in E_f$ for $i \in \{1, \dots, k\}$ with $k \le m$.
Here, the right-hand side is the sum of distances between sets in $X$. 

\begin{lemma} \label{lem:predist}
Let $x,y \in X_f$. Then, we have the following. 
\begin{enumerate}
\item $|xy|_m \ge |xy|_{m+1}$. 
\item $|xz|_m + |zy|_\ell \ge |xy|_{m+\ell}$ for $z \in X_f$. 
Further, if $z \in E_f$, then $|xz|_m + |zy|_\ell \ge |xz|_{m+\ell+1}$. 
\item $|xy|_m$ converges to $|xy|$ as $m \to \infty$. 
\item For $m \in \mathbb Z_{\ge 0}$, if $|xy|_m$ is finite, then there is a sequence of points $\{p_i\}_{i=0}^{k+1}$ with $k \le m$ which attains the infimum in the definition of $m$-distance $|xy|_m$. 
\end{enumerate}
\end{lemma}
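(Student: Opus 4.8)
My plan is to dispatch the four assertions in turn, with the only real analytic ingredient being the properness of $X_f$ from Lemma~\ref{lem:length}(3). The single structural fact I would keep in the foreground is that $|pq|_X$ denotes the distance between the \emph{subsets} $\pi^{-1}(p),\pi^{-1}(q)\subset X$, and that such a set-distance obeys the triangle inequality only when routed through a one-point set. For (1) nothing is needed: every sequence $\{p_i\}_{i=0}^{k+1}$ admissible for $|xy|_m$, i.e.\ with $k\le m$, is also admissible for $|xy|_{m+1}$, so the infimum defining $|xy|_{m+1}$ runs over a larger family and $|xy|_{m+1}\le|xy|_m$.

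For (2) I would fix $\varepsilon>0$, take near-optimal sequences realizing $|xz|_m$ and $|zy|_\ell$, having $a\le m$ and $b\le\ell$ interior $E_f$-points, and splice them at $z$. When $z\in E_f$, the point $z$ is itself an admissible interior point, so keeping it gives an admissible sequence for $x,y$ with $a+b+1\le m+\ell+1$ interior points and total length $\le|xz|_m+|zy|_\ell+2\varepsilon$; letting $\varepsilon\to0$ yields the ``Further'' bound $|xy|_{m+\ell+1}\le|xz|_m+|zy|_\ell$. When $z\notin E_f$, the fibre $\pi^{-1}(z)=\{z\}$ is a single point, so the two edges $|p_a z|_X$ and $|z q_1|_X$ abutting $z$ can be merged through that one point by the ordinary triangle inequality in $X$, giving $|p_a q_1|_X\le|p_a z|_X+|z q_1|_X$; deleting $z$ then leaves an admissible sequence with only $a+b\le m+\ell$ interior points, so $|xy|_{m+\ell}\le|xz|_m+|zy|_\ell$. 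I would stress that the whole dichotomy is forced by the set-distance convention: for $z\in E_f$ the fibre $\{z,f(z)\}$ usually has two elements, through which the triangle inequality may fail, so $z$ cannot be discarded for free and the extra unit in $m+\ell+1$ is unavoidable.

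For (3) I would first use (1): the sequence $m\mapsto|xy|_m$ is nonincreasing, hence convergent to $\inf_m|xy|_m$, and the task is to identify this limit with $|xy|$ from \eqref{eq:length}. In one direction, any sequence admissible for some $|xy|_m$ becomes, after realizing each edge $|p_i p_{i+1}|_X$ by representatives in $X$, a configuration of the form in \eqref{eq:length} of equal length, so $|xy|_m\ge|xy|$ for all $m$ and $\lim_m|xy|_m\ge|xy|$. In the other direction, a configuration in \eqref{eq:length} making $k$ identifications is admissible for $|xy|_k$ once the classes $\pi(y_{i-1})=\pi(x_i)\in E_f$ are recorded as interior points, so $|xy|_k\le\sum_i|x_iy_i|_X$; taking the infimum over all such configurations gives $\inf_m|xy|_m\le|xy|$. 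The two inequalities give $\lim_m|xy|_m=|xy|$.

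Assertion (4) is the one I expect to demand real care, and I would establish it by compactness. Fixing $m$ with $|xy|_m<\infty$, I take a minimizing sequence of admissible sequences; each uses at most $m$ interior points, so after passing to a subsequence the number $k\le m$ is constant. Since the glued metric is a genuine metric dominated by the set-distance, each interior point obeys $|x\,p_i^{(n)}|\le\sum_{j<i}|p_j^{(n)}p_{j+1}^{(n)}|\le\sum_{j<i}|p_j^{(n)}p_{j+1}^{(n)}|_X\le|xy|_m+1$ for large $n$, confining all interior points to a fixed closed ball of $X_f$, which is compact by Lemma~\ref{lem:length}(3). Extracting convergent subsequences and using that $E_f$ is closed---being the $\sim$-saturated image of the closed set $E$---produces limits $p_i\in E_f$. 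The subtle point is the convergence of the edge lengths: as $f$ is continuous by ($\spadesuit$), the fibres $\pi^{-1}(p_i^{(n)})=\{p_i^{(n)},f(p_i^{(n)})\}$ converge in the Hausdorff sense to $\pi^{-1}(p_i)$, and the distance between compact subsets of $X$ is continuous under Hausdorff convergence, so $\sum_i|p_ip_{i+1}|_X=\lim_n\sum_i|p_i^{(n)}p_{i+1}^{(n)}|_X=|xy|_m$ and the limit configuration attains the infimum. This continuity of the set-distance under Hausdorff convergence of the one- or two-point fibres, together with the confinement of minimizers to a compact region, is the step I expect to be the main obstacle.
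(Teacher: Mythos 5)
Your proposal is correct in substance; since the paper dispatches this lemma in two sentences (``(1), (2) and (3) trivially follow from the definition'', ``(4) follows from the properness of $X$''), yours is necessarily an elaboration rather than a transcription, but it is worth recording where it genuinely adds content and where it differs from the paper's hint. Your treatment of (2) is the right reading of a statement the paper leaves imprecise: as you observe, for $z \in E_f$ the two-point fibre $\{z, f(z)\}$ breaks the triangle inequality for set-distances, so the bound $|xz|_m + |zy|_\ell \ge |xy|_{m+\ell}$ can genuinely fail there --- take $X = [0,10]$, $E = \{0,10\}$, $f$ the swap, $z = \pi(0)$, $x = 1$, $y = 9$: then $|xz|_0 + |zy|_0 = 2 < 8 = |xy|_0$, while the $(m+\ell+1)$-bound holds with equality --- and your dichotomy (clean bound off $E_f$, extra unit on $E_f$) is exactly what the rest of the paper uses. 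One cosmetic remark on (3): your phrase ``the classes $\pi(y_{i-1}) = \pi(x_i) \in E_f$'' silently assumes every identification in a chain for \eqref{eq:length} is non-trivial; if $y_{i-1} = x_i \notin E$ the class is not in $E_f$, but such a vertex is removed by the same singleton-fibre merging you use in (2).

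The only step that needs repair is in (4). You run the compactness argument in $X_f$ (properness of $X_f$, Lemma \ref{lem:length}(3)), whereas the paper's hint points to compactness in $X$; both routes work, but yours hinges on the claim that $p_i^{(n)} \to p_i$ in the glued metric forces the fibres $\pi^{-1}(p_i^{(n)})$ to converge to $\pi^{-1}(p_i)$ in $X$, and ``$f$ is continuous by ($\spadesuit$)'' does not by itself justify this: continuity of $f$ transports convergence of representatives in $X$, which is precisely what you do not yet have, since glued-metric closeness is a priori realized by chains, not by ambient closeness of representatives. The bridge is the inclusion \eqref{eq:open map} established in the proof of Lemma \ref{lem:length}: glued-metric balls around a point of $E_f$ lie inside ambient-metric neighborhoods of its fibre, hence every representative of $p_i^{(n)}$ is $X$-close to the fibre of $p_i$ for large $n$. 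This one-sided statement already yields $|p_i p_{i+1}|_X \le \liminf_n |p_i^{(n)} p_{i+1}^{(n)}|_X$, which is all that (4) requires; full Hausdorff convergence of fibres (where continuity of $f$ does enter, to rule out both representatives clustering on one side) is not needed. Alternatively, following the paper's hint one can avoid the issue entirely: realize each edge of a near-minimizing chain by representatives in $X$, note that all chain points stay in a fixed compact subset of $X$ because $f$, being continuous, maps the relevant compact subsets of $E$ to compact subsets, pass to a subsequence along which the finitely many ``swap patterns'' at interior vertices stabilize, and take limits in $X$.
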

\begin{proof}
The properties (1), (2) and (3) trivially follow from the definition. 
The property (4) follows from the properness of $X$.
\end{proof}
A sequence $\{p_i\}_{i=0}^{k+1}$ obtained as Lemma \ref{lem:predist} (4) is called an $m$-{\it shortest path} between $x$ and $y$. 

We use the following convention: for two proper geodesic spaces $X_1$ and $X_2$ and their closed subsets $E_1 \subset X_1$ and $E_2 \subset X_2$ satisfying ($\spadesuit$), if there is an isometry $f : E_1 \to E_2$ with respect to the induced length metrics, then a space $X_1 \cup_f X_2$ denotes the metric space $X_{\bar f}$, where $X$ is the disjoint union of $X_1$ and $X_2$ and $\bar f$ is the canonical extension of $f$ to an isometric involution on $E_1 \sqcup E_2$ defined as $\bar f(x) = f^{-1}(x)$ for $x \in E_2$.
This is the definition of the metric on the space $M_1 \cup_f M_2$ in Corollary \ref{cor:partial gluing}.

\subsection{Basics of Alexandrov spaces}
Let $A,B,C$ be nonnegative numbers satisfying triangle inequality $A + B \ge C \ge |A-B|$.
When $\kappa \neq 0$, the $\kappa$-comparison angle of $(A;B,C)$ is defined as follows. 
\[
\tilde \angle_\kappa (A;B,C) := 
\arccos \frac{\csk A - \csk B\, \csk C}{\kappa\, \snk B\, \snk C} \in [0,\pi]
\]
if $B \cdot C > 0$ and $A+B+C < 2 D_\kappa$. 
Otherwise, $\tilde \angle_\kappa (A;B,C) = 0$. 
Further, we set
\[
\tilde \angle_0 (A;B,C) := \lim_{\kappa \to 0} \tilde \angle_\kappa (A;B,C). 
\]
For points $a,b,c$ in a metric space, we set 
\[
\tilde \angle_\kappa b a c := \tilde \angle_\kappa (a;b,c) := \tilde \angle_\kappa (|bc|; |ab|, |ac|)
\]
which is called the $\kappa$-comparison angle of $\{a,b,c\}$ at $a$.

\begin{definition} \upshape \label{def:Alex}
A proper geodesic space $M$ which is possibly disconnected, is called an {\it Alexandrov space} if for any $x \in M$, there exist a number $\kappa \in \mathbb R$ and a neighborhood $U$ of $x$ in $M$ such that for any distinct four points $a,b,c,d$ in $U$, we have the comparison inequality 
\[
\tilde \angle_\kappa b a c + \tilde \angle_\kappa c a d + \tilde \angle_\kappa d a b \le 2 \pi.
\]
A number $\kappa$ as above is called a lower curvature bound at $x$ in $X$. 
When $\kappa$ is taken independently on the choice of points $x$, we say that $M$ is an Alexandrov space of curvature $\ge \kappa$. 
\end{definition}
Due to the globalization theorem (\cite{BGP}), if $M$ is a connected Alexandrov space of curvature $\ge \kappa$ with $\kappa > 0$, then any geodesic triangle has perimeter not greater than $2 D_\kappa$. 
In particular, each component of $M$ has diameter at most $D_\kappa$. 

For each component $M$ of an Alexandrov space, its Hausdorff dimension is known to be same as the topological dimension. 
The same value is called the dimension of $M$.
We say that an Alexandrov space is of dimension $n$, if every component is of dimension $n$.
In the present paper, we deal with only finite dimensional Alexandrov spaces. 

There are other characterizations of Alexandrov spaces based on several conditions, called, triangle comparison, hinge comparison and comparison-angle monotonicity (see \cite{BGP}, \cite{BBI}). 
Such conditions are generalized for $1$-Lipschitz curves instead of geodesics (see Proposition \ref{prop:convex curve}) in a suitable way. 
So, the definitions of such conditions are omitted, here.  

From now on, we denote by $M$ an Alexandrov space of finite dimension. 
For $x \in M$ and two geodesics $\alpha, \beta : [0,a] \to M$ starting at $x = \alpha(0) = \beta(0)$, 
the comparison-angle monotonicity enable to us define the {\it angle} between $\alpha$ and $\beta$ at $x$ as 
$\angle (\alpha, \beta) := \lim_{s,t \to 0} \tilde \angle_0 \alpha(s) x \beta(t)$.
On the set of all non-constant geodesics emanating from $x$ in $M$, the angle becomes a pseudo-distance. 
The metric space of all equivalent classes of geodesics obtained in a usual way is denoted by $\Sigma_x' = \Sigma_x' M$. 
Its completion $\Sigma_x = \Sigma_x M$ is called the space of directions at $x$. 
Further, $\Sigma_x M$ becomes a connected Alexandrov space of curvature $\ge 1$ and of dimension $(\dim M -1)$ or the metric space consisting of two points with the distance $\pi$.
In particular, the diameter of $\Sigma_x$ is not greater than $\pi =D_1$. 
For $y \neq x$, we denote by $y_x' \subset \Sigma_x' M$ the set of all directions of geodesics from $x$ to $y$. 
An element of $y_x'$ is denoted by $\uparrow_x^y$.

For the definition of Gromov-Hausdorff convergence, we refer \cite{BBI}. 
The Gromov-Hausdorff limit of the family of scaling pointed spaces $(r M, x)$ as $r \to \infty$, where $r M$ denotes the set $M$ equipped with the distance function of $M$ multiplied with $r$, always exists and is denoted by $(T_x M, o_x)$, which is called the tangent cone of $M$ at $x$. 
It is known that $(T_x M, o_x)$ is isometric to the Euclidean cone over $\Sigma_x$ with the vertex. 
By the definition, $T_x M$ has nonnegative curvature.

For each $x \in M$, we define the (multi-valued) logarithm map at $x$ as 
\[
\log_x : M \ni y \mapsto |x y| y_x' \subset T_x M,
\]
where, we set $\log_x(x) := o_x$.
The {\it exponential map} at $x$ is defined as the left-inverse of $\log_x$ which is single-valued, that is, $\exp_x \circ \log_x = \mathrm{id}_M$.

The $\kappa$-cone over $\Sigma_x M$ is denoted by $T_x^\kappa M$. 
Since a map $\log_x^\kappa : B(x,D_\kappa/2) \to T_x^\kappa M$ can be defined in the same manner to define the usual logarithm as above, 
the corresponding exponential map $\exp_x^\kappa$ can be defined as the map from some subset of $T_x^\kappa M$ such that $\exp_x^\kappa \circ \log_x^\kappa = \mathrm{id}_{B(x,D_\kappa/2)}$.
If $\kappa$ is a lower curvature bound on $M$, then $\exp_x^\kappa$ is $1$-Lipschitz.
Note that the image of $\log_x^\kappa$ is not dense in $T_x^\kappa M$. 
For instance, if $M$ is a closed convex region with boundary in the plane, 
then a vector in the tangent cone $T_x^0 M$ at $x \in M$ can not be approximated by the image of $\log_x^0$, 
indeed, the image of $\log_x^0$ is an isometric copy of $M$ in $T_x^0 M$. 
However, $\exp_x^\kappa$ is extended to a $1$-Lipschitz map defined on the whole space $T_x^\kappa M$ (see Subsection \ref{subsec:grad}).

\subsection{The boundary}
The {\it boundary} $\partial M$ of an Alexandrov space $M$ is defined as follows. 
If $\dim M = 1$, then $M$ is a complete one-manifold possibly with boundary and $\partial M$ is the usual boundary. 
When $\dim M \ge 2$, for $p \in M$, we say $p \in \partial M$ if $\partial \Sigma_p \neq \emptyset$.

\subsection{Semiconcave functions on Alexandrov spaces}

Let $M$ denote an Alexandrov space.

\begin{definition} \upshape \label{def:semiconcave}
Let $U$ be an open subset of $M$ and $g : U \to \mathbb R$ a function. 
A locally Lipschitz function $f : U \to \mathbb R$ is said to be {\it $g$-concave} (in the barrier sense) 
if for any $x \in U$ and $\epsilon > 0$, there exists a neighborhood $V$ of $x$ in $U$ such that for any minimal geodesic $\gamma$ contained in $V$, the function $f$ along $\gamma$ is $(g(x)+\epsilon)$-concave in the sense that 
\[
f \circ \gamma(t) - \frac{g(x)+\epsilon}{2} t^2
\]
is concave in $t$.
In this case, we use the notation 
\[
f'' \le g \text{ on } U.
\]
A locally Lipschitz function which is $g$-concave for some $g$ is called a {\it semiconcave function}.
\end{definition}

Recall that any distance function $d_A$ from a compact subset $A$ of an Alexandrov space $M$ is semiconcave. 
Indeed, if $U$ is an open subset containing $A$ and has a lower curvature bound $\kappa$, 
then we have 
\begin{equation} \label{eq:rho dist}
(\rho_\kappa \circ d_A \circ \gamma(t))'' \le 1 - \kappa (\rho_\kappa \circ d_A \circ \gamma(t)) 
\end{equation}
for each geodesic $\gamma$ in $U$.
Here, $\rho_\kappa$ is defined as 
\[
\rho_\kappa(u) = \int_0^u \snk\, v\, d v.
\]
When $\kappa \neq 0$, this is represented as $\rho_\kappa = (- \kappa)^{-1} (\csk - 1)$.
Hence, setting $h := - \kappa^{-1} \csk (d_A \circ \gamma(t))$, the condition \eqref{eq:rho dist} is equivalent to 
\begin{equation} \label{eq:rho dist equiv}
h'' + \kappa h \le 0
\end{equation}
in the barrier sense, if $\kappa \neq 0$.

From a smoothing argument and a direct calculation, we have 
\begin{lemma} \label{lem:sine comp}
Let $I$ be an interval and $f : I \to [0, D_\kappa]$ a Lipschitz function satisfying 
\[
(\rho_\kappa \circ f)'' + \kappa \rho_\kappa \circ f \le 1
\]
in the barrier sense. 
Then, in the domain $\{t \mid 0 < f(t) < D_\kappa / 2 \}$, the function $k := \snk f$ satisfies 
\[
k''(t) \le \frac{\csk^2 f(t) - A_t^2}{\snk f(t)}
\]
in the barrier sense, where 
\[
A_t = \limsup_{t' \to t} \left|\frac{f(t')-f(t)}{t'-t}\right|.
\]
\end{lemma}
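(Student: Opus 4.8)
The plan is to reduce the statement to a one-variable ODE computation via smoothing, exactly as the phrase "from a smoothing argument and a direct calculation" suggests. First I would observe that the hypothesis $(\rho_\kappa\circ f)''+\kappa\,\rho_\kappa\circ f\le 1$ is, by the identity $\rho_\kappa=(-\kappa)^{-1}(\csk-1)$ (when $\kappa\neq 0$; the case $\kappa=0$ is handled by taking the limit $\kappa\to 0$ at the end), equivalent to a clean inequality for $g:=\csk\circ f$. Indeed $\rho_\kappa\circ f=(-\kappa)^{-1}(g-1)$, so the hypothesis becomes $(-\kappa)^{-1}(g''+\kappa g - \kappa)\le 1$, i.e. $g''+\kappa g\le \kappa-\kappa^2\rho_\kappa\circ f=\kappa\csk\circ f=\kappa g$ after simplification; one must track the sign of $\kappa$ carefully here since multiplying a barrier inequality by $(-\kappa)^{-1}$ flips the direction when $\kappa>0$. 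The outcome of this bookkeeping is a second-order differential inequality for $\csk\circ f$ with no inhomogeneous term, which is the natural object to differentiate.

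Next I would justify passing to a pointwise, twice-differentiable setting. Since the desired conclusion $k''\le (\csk^2 f-A_t^2)/\snk f$ is asserted in the barrier sense, it suffices to produce, at each interior point $t_0$ of the domain $\{0<f<D_\kappa/2\}$ and each $\epsilon>0$, a smooth upper barrier for $k=\snk\circ f$ that touches $k$ at $t_0$ and whose second derivative is at most $(\csk^2 f(t_0)-A_{t_0}^2)/\snk f(t_0)+\epsilon$. The smoothing argument lets me assume $f$ is $C^2$ at the contact point after regularizing, so I may compute derivatives of $k=\snk\circ f$ directly. The quantity $A_t$ is the local Lipschitz slope of $f$, which in the smooth setting is exactly $|f'(t)|$, and this is the crucial link that lets the first-derivative term $(f')^2$ be estimated from below by $A_t^2$.

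The computation itself is the routine "direct calculation": with $k=\snk\circ f$ one has $k'=\csk(f)\,f'$ and
\[
k''=\csk(f)\,f''-\kappa\,\snk(f)\,(f')^2 .
\]
Using $\snk''=-\kappa\snk$ and $\csk'=-\kappa\snk$, I rewrite the hypothesis-derived inequality $(\csk\circ f)''+\kappa\,\csk\circ f\le\kappa\,\csk\circ f$, i.e. $(\csk\circ f)''\le 0$, as a bound on $\csk(f)\,f''$; expanding $(\csk\circ f)''=-\kappa\snk(f)f''-\kappa\csk(f)(f')^2$ and solving for $f''$ gives an upper bound for $\csk(f)\,f''$ that, once substituted into the expression for $k''$, collapses to $(\csk^2 f-(f')^2\cdot(\text{factor}))/\snk f$. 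On the domain $0<f<D_\kappa/2$ we have $\snk f>0$ and $\csk f>0$, so all divisions are legitimate and no sign issues arise. Finally, replacing $|f'|$ by its upper semicontinuous majorant $A_t$ and taking the smoothing parameter and $\epsilon$ to zero delivers the claimed barrier inequality, and letting $\kappa\to 0$ recovers the Euclidean case by continuity of $\snk,\csk$ in $\kappa$.

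The main obstacle I anticipate is not the algebra but the careful handling of the barrier sense through the smoothing: one must verify that the sign of $\kappa$ is correctly tracked when converting between the $\rho_\kappa$-form and the $\csk$-form of the inequality, and that the passage $|f'|^2\ge A_t^2$ goes in the direction that \emph{strengthens} rather than weakens the bound on $k''$. Getting the inequality direction right at the step where one multiplies through by the possibly-negative factor $(-\kappa)^{-1}$, and again where the $-\kappa\,\snk(f)\,(f')^2$ term is absorbed, is where the proof could silently break, so I would write those two steps out in full rather than abbreviate them.
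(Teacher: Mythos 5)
Your high-level plan (mollify, compute for smooth $f$, pass to the limit) is the intended one, but your central algebraic reduction is false, and it fails exactly at the sign-tracking step you yourself flagged as dangerous. Set $g:=\csk\circ f$, so that $\rho_\kappa\circ f=(-\kappa)^{-1}(g-1)$ and the hypothesis reads $(-\kappa)^{-1}\bigl(g''+\kappa g-\kappa\bigr)\le 1$. Multiplying by $-\kappa$ gives $g''+\kappa g\ge 0$ when $\kappa>0$ (the factor is negative, so the inequality flips) and $g''+\kappa g\le 0$ when $\kappa<0$; in neither case do you obtain $(\csk\circ f)''\le 0$. Indeed this claim is simply wrong: for $\kappa=-1$, the smooth function $f$ defined near $t=0$ by $\cosh f=1.1+0.1\,t^2$ satisfies the hypothesis (the left side equals $0.1-0.1\,t^2\le 1$) yet has $(\csk f)''=0.2>0$. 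Consequently the ``upper bound on $\csk(f)f''$'' you substitute into $k''$ does not exist in the form you use it, and your final display — left unresolved as ``$(\csk^2f-(f')^2\cdot(\text{factor}))/\snk f$'' — is never actually established. The calculation that works does not pass to $\csk\circ f$ at all; it differentiates the hypothesis as it stands. For smooth $f$ one has
\[
(\rho_\kappa\circ f)''=\snk(f)\,f''+\csk(f)\,(f')^2,\qquad \kappa\,\rho_\kappa(f)=1-\csk(f),
\]
so the hypothesis is equivalent (for either sign of $\kappa$) to $\snk(f)\,f''\le\csk(f)\bigl(1-(f')^2\bigr)$. Since $\snk(f)>0$ and $\csk(f)>0$ on $\{0<f<D_\kappa/2\}$,
\[
k''=\csk(f)\,f''-\kappa\,\snk(f)\,(f')^2
\le\frac{\csk^2(f)\bigl(1-(f')^2\bigr)-\kappa\,\snk^2(f)\,(f')^2}{\snk(f)}
=\frac{\csk^2(f)-(f')^2}{\snk(f)},
\]
where the last equality uses the identity $\csk^2+\kappa\,\snk^2=1$; this cancellation is the whole point of the lemma and never appears in your write-up.

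There is also a gap in your limiting step, again a direction issue. Since $A_t\ge|f'|$ in general, the stated conclusion (with $A_t^2$) is \emph{stronger} than what ``smooth computation plus a limit'' yields (with $\liminf_\epsilon|f_\epsilon'|^2$), so ``replacing $|f'|$ by its upper semicontinuous majorant $A_t$'' is not a legitimate move: mollification can genuinely shrink slopes, e.g.\ at a concave kink of $f$ the mollified derivatives converge to the average of the one-sided slopes, while $A_t$ is the larger of their absolute values. To close this, use that the hypothesis forces $\rho_\kappa\circ f$, hence $f$, to be semiconcave on the domain (convex kinks are excluded by the barrier condition, as in \eqref{eq:rho dist equiv}), and treat two cases: at points where $f'^+\neq f'^-$ the desired barrier inequality for $k=\snk f$ holds for trivial reasons (a concave kink only helps an upper second-derivative bound), while at points of differentiability one has $A_t=|f'(t)|$ and, by monotonicity of the derivative of a semiconcave function, the mollified slopes do converge to $f'(t)$, so the smooth estimate passes to the limit with the correct constant.
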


From a basic calculus of one variable, we obtain the following Lemma \ref{lem:contradiction1}. 
This is needed in the proof of Theorem \ref{thm:self gluing}. 
For the completeness, we prove Lemma \ref{lem:contradiction1}. 

\begin{lemma} \label{lem:contradiction1}
Let $f : [a,b] \to \mathbb R$ be a continuous function which is concave in the first order, that is, it satifies 
\begin{equation} \label{eq:a1}
\overline f^+(t) \le \underline f^-(t)
\end{equation} 
for all $t \in (a,b)$, where $\overline f^+(t) = \limsup_{\epsilon \to 0+} (f(t+\epsilon)-f(t))/\epsilon$ and $\underline f^-(t) = \liminf_{\epsilon \to 0+} (f(t)-f(t-\epsilon))/\epsilon$. 
Let $g : [a,b] \to \mathbb R$ be a continuous function. 
Suppose that $f$ does not satisfy $f'' \le g$ on $(a,b)$ in the barrier sense. 
Then, there exist $t_0 \in (a,b)$, $A \in \mathbb R$ and $\epsilon > 0$ such that 
\[
f(t) \ge f(t_0) + \frac{g(t_0)}{2} (t-t_0)^2 + \epsilon (t-t_0)^2 + A (t-t_0)
\]
for all $t$ with $|t-t_0|<\epsilon$. 
\end{lemma}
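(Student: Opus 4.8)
The plan is to produce, at a suitable local minimum of an auxiliary function, an explicit parabola lying below $f$. First I would unwind the barrier inequality. Since $f$ does not satisfy $f'' \le g$ on $(a,b)$, the negation of Definition \ref{def:semiconcave} specialized to one variable yields a point $t_* \in (a,b)$ and a constant $\epsilon_0 > 0$ such that, writing $c := g(t_*) + \epsilon_0$ and $h(t) := f(t) - \frac{c}{2}t^2$, the function $h$ fails to be concave on every neighborhood of $t_*$ (concavity of $h$ on a subinterval is precisely the $c$-concavity of $f$ there). By continuity of $g$ at $t_*$ I then fix $\delta > 0$ so small that $g(t) < g(t_*) + \epsilon_0/2$ whenever $|t - t_*| < \delta$; this leaves a definite gap between $c$ and the nearby values of $g$, which is exactly what will force the surplus term in the conclusion to be strictly positive.

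Next I would localize the failure of concavity. On $V := (t_* - \delta, t_* + \delta)$ the function $h$ is not concave, so by the definition of concavity there are points $s < u$ in $V$ and an interior point where $h$ lies strictly below the chord joining $(s,h(s))$ and $(u,h(u))$. Letting $\ell(t) = mt + \beta$ be that chord and setting $H := h - \ell$ on $[s,u]$, one has $H(s) = H(u) = 0$ while $\inf_{[s,u]} H < 0$. As $H$ is continuous on a compact interval it attains its minimum at some $t_0$, and since the minimal value is negative the minimizer must be interior, $t_0 \in (s,u) \subset V$.

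Finally I would read off the parabola. From $H(t) \ge H(t_0)$ on $[s,u]$, expanding $\tfrac{c}{2}t^2$ about $t_0$ and setting $A := c\,t_0 + m$ gives
\[
f(t) \ge f(t_0) + \frac{c}{2}(t - t_0)^2 + A(t - t_0) \qquad (t \in [s,u]).
\]
Because $t_0 \in V$ we have $g(t_0) < g(t_*) + \epsilon_0/2$, hence $\frac{c}{2} = \frac{g(t_0)}{2} + \epsilon'$ with $\epsilon' := \tfrac12\big(g(t_*) + \epsilon_0 - g(t_0)\big) > \epsilon_0/4 > 0$. Choosing $r > 0$ with $(t_0 - r, t_0 + r) \subset (s,u)$ and setting $\epsilon := \min\{\epsilon', r\}$ then yields the asserted inequality on $\{\,|t - t_0| < \epsilon\,\}$, since replacing the coefficient $\epsilon'$ by the smaller $\epsilon$ only weakens the right-hand side.

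I expect the substantive difficulty to be bookkeeping rather than geometry: the argument must match the negation of the barrier-sense condition with concavity on subintervals under the quadratic normalization, and then track constants so that the surplus coefficient stays strictly positive \emph{and} the neighborhood radius can be taken equal to the coefficient, as the statement literally demands. It is worth remarking that, apart from continuity of $f$ and $g$, the deduction does not actually invoke the first-order concavity \eqref{eq:a1}: the inequality $H(t)\ge H(t_0)$ at the minimizer already delivers the bound regardless of the nature of $t_0$, and a downward corner of $f$ would in any case produce a local maximum of $H$, so it can never be the minimizer. Thus \eqref{eq:a1}, though assumed, serves mainly to fix the geometric picture in the intended application.
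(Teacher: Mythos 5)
Your proof is correct and follows, in skeleton, the same route as the paper's: negate the barrier condition to produce $c=g(t_*)+\epsilon_0$ and $h=f-\tfrac{c}{2}t^2$ non-concave on every neighborhood of $t_*$, find a chord that $h$ dips strictly below, take an interior minimizer $t_0$ of $H=h-\ell$, and expand the quadratic about $t_0$. The one genuine difference is how the linear coefficient $A$ is produced, and it is in your favor. The paper invokes hypothesis \eqref{eq:a1} exactly at this point: at an interior minimizer the forward difference quotients of $H$ are $\ge 0$ and the backward ones are $\le 0$, and together with \eqref{eq:a1} this forces $f$ to be differentiable at $t_0$, after which the paper sets $A=f'(t_0)$. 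You instead read $A=c\,t_0+m$ directly off the chord slope, so no differentiability is needed; consequently, as you observe, \eqref{eq:a1} is dispensable and the lemma holds for any continuous $f$. This mildly strengthens the statement and would even streamline its application in Lemma \ref{lem:m-dist comp}, where the paper pauses to verify \eqref{eq:a1} via \eqref{eq:later} before the lemma can be applied. You are also more scrupulous than the paper about the final bookkeeping: the statement demands a single $\epsilon$ serving both as surplus coefficient and as neighborhood radius, which you arrange by $\epsilon=\min\{\epsilon',r\}$, whereas the paper ends with surplus $\epsilon/4$ on all of $[c,d]$ and leaves the last shrinking implicit. One quibble with your closing aside: a concave (``downward'') kink of $f$ need not produce a local \emph{maximum} of $H$ (it can sit on a monotone branch); the correct assertion is that such a kink can never be an interior \emph{minimizer} of $H$, since there one would need left derivative $\le 0\le$ right derivative while the kink forces left $>$ right. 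This imprecision lies outside the proof proper and does not affect its validity.
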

\begin{proof}
From the assumption, there exist $\epsilon >0$ and $t_1 \in (a,b)$ such that a function
\[
\varphi(t; t_1) := f(t) - \frac{g(t_1) + \epsilon}{2} t^2
\]
is not concave in $t$ on any neighborhood $I$ of $t_1$. 
Hence, there exists an interval $(c,d) \subset I$ such that 
\[
\varphi(t;t_1) < \ell(t)
\]
holds on $(c,d)$, where $\ell$ is a linear function satisfying $\ell(c)=\varphi(c;t_1)$ and $\ell(d)=\varphi(d;t_1)$. 
Let $t_0 \in (c,d)$ be a minimizer of $\varphi(t;t_1) -\ell(t)$. 
Then, we have 
\begin{equation} \label{eq:a2}
\varphi(t;t_1) \ge \varphi(t_0;t_1) + \ell(t) - \ell(t_0)
\end{equation}
on $[c,d]$. 
Note that the assumption \eqref{eq:a1} implies that $\varphi$ and $f$ is differentiable at $t_0$. 
Hence, we have
\[
\ell'(t_0) = \varphi'(t_0;t_1) = f'(t_0) - (g(t_1) + \epsilon)t_0.
\]
Therefore, the equation \eqref{eq:a2} is represented as 
\begin{align*}
f(t) - f(t_0) 
&\ge
\frac{g(t_1)+\epsilon}{2}(t -t_0)^2 + f'(t_0) (t-t_0).
\end{align*}
Since $g$ is uniformly continuous, we may assume that $|g(t_0)-g(t_1)| \le \epsilon/2$ by taking $I$ to have small length. 
Then, we obtain 
\[
f(t)-f(t_0) \ge \frac{g(t_0)+\epsilon/2}{2} (t-t_0)^2 + f'(t_0) (t-t_0)
\]
on $[c,d]$. 
This complete the proof of Lemma \ref{lem:contradiction1}.
\end{proof}

\subsection{Gradient curves for semiconcave functions} \label{subsec:grad}
For a semiconcave function $f : U \to \mathbb R$ and $x \in U$, the {\it derivative} $f' = f_x'$ of $f$ at $x$ is defined a function on $\Sigma_x'$ defined by 
\[
f_x'(\xi) := \lim_{t \to 0+} \frac{f(\exp_x(t \xi))- f(x)}{t}.
\]
Then, $f_x'$ becomes a Lipschitz function on $\Sigma_x'$. 
Hence, there is a unique Lipschitz extension on $\Sigma_x$ of $f_x'$. 
We denote it by the same symbol $f_x' : \Sigma_x \to \mathbb R$.
Then, $f_x'$ is spherically concave, that is, it is $(- f_x')$-concave.
The cone extension of $f_x'$ is also denoted by $f_x' : T_x M \to \mathbb R$ 
defined as $f_x'(a \xi) = a f_x' (\xi)$ for $a \ge 0$ and $\xi \in \Sigma_x$, which is $0$-concave.

The {\it gradient vector} of a semiconcave function $f : U \to \mathbb R$ at $x \in U$ 
is a vector $g \in T_x M$ satisfying $|g|^2 = f_x'(g)$ and $\left< g, v \right> \ge f_x'(v)$ for all $v \in T_x M$.
It always exists and is unique, and is denoted by $g = \nabla f(x)$.
Note that $f_x' \le 0$ everywhere if and only if $\nabla f(x) = o_x$.

For a locally Lipschitz curve $\gamma : [0,a) \to U$, the {\it forward direction} $\gamma^+(t) \in T_{\gamma(t)} M$ at $t \in [0,a)$ 
is defined as follows. 
When the limit $\lim_{\epsilon \to 0+} |\gamma(t) \gamma(t+\epsilon)| / \epsilon$ exists and is zero, we set $\gamma^+(t) = 0$. 
When the limit $\lim_{\epsilon \to 0+} |\gamma(t)\gamma(t+\epsilon)| / \epsilon$ exists and is positive, 
and the limit of a sequence of directions $\uparrow_{\gamma(t)}^{\gamma(t+\epsilon_i)}$ is uniquely determined independently on the choices of $\epsilon_i \to 0+$ and geodesic directions $\uparrow_{\gamma(t)}^{\gamma(t+\epsilon_i)} \in \gamma(t +\epsilon_i)_{\gamma(t)}'$, 
we set 
\[
\gamma^+(t) := \lim_{i \to \infty} \frac{|\gamma(t)\gamma(t+\epsilon_i)|}{\epsilon_i} \uparrow_{\gamma(t)}^{\gamma(t+\epsilon_i)}.
\]
Similarly, the backward direction at $t$ is defined to be $\gamma^-(t) := \gamma^+(-s + t) |_{s = +0}$ if it exists.

\begin{definition}[{\cite{PP:qg}, \cite{Pet:qg}, \cite{Pet:semi}}] \upshape \label{def:gradient curve}
Let $f : U \to \mathbb R$ be a semiconcave function. 
A {\it gradient curve} for $f$ starting at $x \in U$ is a locally Lipschitz curve $\gamma : [0,a) \to U$ with $\gamma(0) = x$ such that the forward direction $\gamma^+(t)$ always exists at any time $t \in [0,a)$ and satisfies 
\[
f_{\gamma(t)}'(\gamma^+(t)) = |\nabla f(\gamma(t))|^2, 
\]
equivalently, 
\[
\gamma^+(t) = \nabla f(\gamma(t)).
\]
\end{definition}

For any semiconcave function, its gradient curve starting at any point is known to be uniquely exists (\cite{PP:qg}, \cite{Pet:qg}, \cite{Pet:semi}). 

Modifying gradient flows of distance functions, the following concept was defined in \cite{PP:qg} and \cite{Pet:semi}. 
For every vector $\xi \in \Sigma_x$, we define a curve $\alpha_\xi : [0,D_\kappa/2] \to M$ such that $\alpha_\xi(0) = x$ and 
\[
\alpha_\xi^+(t) = \frac{\mathrm{tg}_\kappa(|x \alpha_\xi(t)|)}{\mathrm{tg}_\kappa(t)} \nabla d_x (\alpha_\xi(t))
\]
holds for every $t \in (0,D_\kappa/2)$.
Here, $\mathrm{tg}_\kappa (u) = \snk(u) / \csk(u)$.
The $\kappa$-{\it gradient-exponential map} $\mathrm{gexp}_x^\kappa : T_x^\kappa M \to M$ is defined as 
\[
\mathrm{gexp}_x^\kappa(t \xi) := \alpha_{\xi}(t)
\]
for all $t \xi \in T_x^\kappa M$.
Note that if $\exp_x^\kappa(v)$ is defined, then $\mathrm{gexp}_x^\kappa(v) = \exp_x^\kappa(v)$ for $v \in T_x^\kappa M$.

\begin{theorem}[\cite{PP:qg}, \cite{Pet:semi}]
Let $M$ be an Alexandrov space of curvature $\ge \kappa$.
Then, the $\kappa$-gradient-exponential map $\gexp_x^\kappa : T_x^\kappa M \to M$ is $1$-Lipschitz and satisfying that 
$\gexp_x^\kappa \circ \log_x = \mathrm{id}$ on $B(x,D_\kappa/2)$. 
In particular, the image of $\gexp_x^\kappa$ is $B(x,D_\kappa/2)$.
\end{theorem}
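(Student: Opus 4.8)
The plan is to realize $\gexp_x^\kappa$ through the gradient curves of the single semiconcave function $d_x = |x\,\cdot\,|$, and to extract the $1$-Lipschitz property from the comparison behaviour of such gradient curves. First I would record that the curve $\alpha_\xi$ is a time-reparametrization of the gradient curve of $d_x$ emanating in the direction $\xi$: the positive scalar factor $\mathrm{tg}_\kappa(|x\alpha_\xi(t)|)/\mathrm{tg}_\kappa(t)$ only rescales the velocity, so existence and uniqueness of $\alpha_\xi$ follow from the cited existence and uniqueness for gradient curves together with the fact that the time-change is well posed on $(0,D_\kappa/2)$. Setting $r(t)=|x\alpha_\xi(t)|$, the first-variation formula gives $r'(t)=\frac{\mathrm{tg}_\kappa(r(t))}{\mathrm{tg}_\kappa(t)}\,|\nabla d_x(\alpha_\xi(t))|^2$; since $|\nabla d_x|\le 1$ and $r=t$ solves the corresponding equation with the factor $1$, an ODE comparison yields $r(t)\le t$ and hence $|\alpha_\xi^+(t)|\le 1$. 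Thus each radial curve is $1$-Lipschitz in $t$, so $\gexp_x^\kappa$ is well defined and contracts along rays.

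The easy half of the identity comes for free from this description. If $\xi=\uparrow_x^y$ is a geodesic direction with $y\in B(x,D_\kappa/2)$, then along the geodesic $xy$ one has $|\nabla d_x|\equiv 1$, $r(t)=t$, and the scaling factor equals $1$, so $\alpha_\xi$ coincides with the unit-speed geodesic $xy$ and $\alpha_\xi(|xy|)=y$. Hence $\gexp_x^\kappa\circ\log_x=\mathrm{id}$ on $B(x,D_\kappa/2)$ and the image of $\gexp_x^\kappa$ contains $B(x,D_\kappa/2)$.

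The heart of the proof is the $1$-Lipschitz estimate, which I would reduce to the comparison of two gradient curves $\alpha=\alpha_\xi$ and $\beta=\alpha_\eta$. Writing $g(t)=|\alpha(t)\beta(t)|$, the first-variation formula bounds the forward derivative of $g$ by $-\langle\alpha^+(t),\uparrow_{\alpha(t)}^{\beta(t)}\rangle-\langle\beta^+(t),\uparrow_{\beta(t)}^{\alpha(t)}\rangle$; substituting $\alpha^+(t)=\frac{\mathrm{tg}_\kappa r}{\mathrm{tg}_\kappa t}\nabla d_x(\alpha(t))$, using the defining inequality $\langle\nabla d_x,v\rangle\ge (d_x)'(v)$, and feeding in the monotonicity of $\kappa$-comparison angles converts this into a differential inequality for $g$ of the distance type $(\rho_\kappa\circ g)''+\kappa\,\rho_\kappa\circ g\le 1$. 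Lemma \ref{lem:sine comp} then turns it into an upper barrier for $k=\snk g$, and comparing with the $\kappa$-cone model value $\bar g(t)=|t\xi,t\eta|_{T_x^\kappa M}$ --- which satisfies the corresponding equality and matches $g$ to first order, $g(t)/t\to|\xi\eta|$ as $t\to0+$ --- the one-variable comparison of Lemma \ref{lem:contradiction1} yields $g(t)\le\bar g(t)$. I expect this step to be the main obstacle: one must control the two angles between the flow velocities and the connecting geodesic uniformly enough that the concavity bound \eqref{eq:rho dist} survives the $\mathrm{tg}_\kappa$-reparametrization, and handle the non-smooth points where $\alpha^+$, $\beta^+$ and the directions $\uparrow$ are only available in the barrier sense.

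Finally, the general estimate $|\gexp_x^\kappa(v)\,\gexp_x^\kappa(w)|\le|vw|_{T_x^\kappa M}$ follows by combining the radial contraction with the equal-radius bound just obtained through the $\kappa$-cone law of cosines (equivalently, by applying the same differential-inequality comparison along the image of the $\kappa$-cone geodesic from $v$ to $w$). This proves that $\gexp_x^\kappa$ is $1$-Lipschitz. Since $\gexp_x^\kappa(o_x)=x$, the Lipschitz bound gives $|x\,\gexp_x^\kappa(v)|\le|v|$, so the image is contained in $B(x,D_\kappa/2)$; together with the previous paragraph the image is exactly $B(x,D_\kappa/2)$, completing the proof.
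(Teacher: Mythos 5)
The paper offers no proof of this statement---it is imported as background from \cite{PP:qg} and \cite{Pet:semi}---so your proposal can only be measured against the standard argument in those references. The two outer parts of your sketch are consistent with it: the radial estimate $|x\,\alpha_\xi(t)|\le t$ (hence speed $\le 1$ along rays), and the identity $\gexp_x^\kappa\circ\log_x=\mathrm{id}$ on $B(x,D_\kappa/2)$ because a unit-speed minimal geodesic from $x$ solves the radial equation with factor $1$. The step you yourself single out as the heart of the proof, however, rests on a false inequality. For $g(t)=|\alpha_\xi(t)\,\alpha_\eta(t)|$, where \emph{both} endpoints move, the distance-type bound $(\rho_\kappa\circ g)''+\kappa\,\rho_\kappa\circ g\le 1$ is not true, and the cone value $\bar g(t)=|(t,\xi),(t,\eta)|_{T_x^\kappa M}$ does \emph{not} satisfy the corresponding equality. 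Test it on the round sphere: $\kappa=1$, $x$ the north pole, $\alpha_\xi,\alpha_\eta$ meridians making an angle $\theta$. Then $g=\bar g$, $\rho_1\circ g(t)=(1-\cos\theta)\sin^2 t$, and
\[
(\rho_1\circ g)''+\rho_1\circ g=(1-\cos\theta)\bigl(2-3\sin^2 t\bigr),
\]
which equals $2(1-\cos\theta)>1$ at $t=0$ whenever $\theta>\pi/3$ (in general $u=\rho_\kappa\circ\bar g$ solves $u''+4\kappa u=2(1-\cos\theta)$, an ODE with doubled frequency). So already in the extremal model configuration the function you want to dominate violates the ODE you propose to dominate it with; no subsequent appeal to Lemma \ref{lem:sine comp} or Lemma \ref{lem:contradiction1} can repair this. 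The inequality \eqref{eq:rho dist} concerns the distance from a \emph{fixed} point along a unit-speed curve; it has no analogue of this form when both endpoints move.

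This is precisely why the proof in \cite{Pet:semi} has a different shape: the $(1-\kappa f)$-concavity of $f=\rho_\kappa\circ d_x$ is exploited \emph{along the geodesic joining the two relevant points}, via $\nabla f=\snk(d_x)\,\nabla d_x$, the inequality $\langle\nabla f,v\rangle\ge f'(v)$, and the first variation formula. This produces a coupled estimate on the triple of distances $\bigl(|x\,\alpha(t)|,\,|xq|,\,|q\,\alpha(t)|\bigr)$ in which the second endpoint $q$ is held \emph{fixed}---Petrunin's radial comparison
\[
\bigl|q\,\alpha_\xi(t)\bigr|\ \le\ \bigl|\,(|xq|,\uparrow_x^q),\,(t,\xi)\,\bigr|_{T_x^\kappa M}
\]
---rather than a single scalar ODE for $g$. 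The $1$-Lipschitz property then follows by taking $q=\exp_x(s\zeta)$ for geodesic directions $\zeta$ and using density of $\Sigma_x'$ in $\Sigma_x$. Two further gaps in your write-up: (i) the closing reduction ``radial contraction plus equal-radius bound implies the general bound'' is not a valid implication---the triangle inequality only gives $|\gexp_x^\kappa(s\xi)\,\gexp_x^\kappa(t\eta)|\le(t-s)+\bar g(s)$ for $s\le t$, which is in general \emph{larger} than the cone distance $|(s,\xi),(t,\eta)|$; and (ii) for a direction $\xi$ that is not a geodesic direction, existence and uniqueness of $\alpha_\xi$ is not ``a well-posed time change of a gradient curve,'' since the initial condition sits at the point $x$ where $d_x$ has no gradient; in the references these curves are built as limits of radial curves in geodesic directions, a construction that itself leans on the Lipschitz estimates being proved.
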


\subsection{Quasigeodesics}

\begin{definition}[\cite{PP:qg}, \cite{Pet:qg}] \upshape \label{def:kappa-convex}
A $1$-Lipschitz curve $\gamma$ in a proper geodesic space $X$ is called $\kappa$-{\it convex} in $X$ 
if for any parameter $t_0$ of $\gamma$ and $x \in X$ close to $\gamma(t_0)$, there is a neighborhood $I$ of $t_0$ such that 
\[
(\rho_\kappa (|x \gamma(t)|))'' \le 1 - \kappa \rho_\kappa (|x \gamma(t)|)
\]
holds in the barrier sense on $I$.
\end{definition}

\begin{proposition}[{\cite[\S1.4 and 1.5]{PP:qg}}] \label{prop:convex curve}
For a $1$-Lipschitz curve $\gamma : [a,b] \to X$ in a metric space $X$. 
The following conditions are equivalent. 
\begin{itemize}
\item[(0)] 
$\gamma$ is $\kappa$-convex. 
\item[(1)] 
Let $p \in X$, $q_1 = \gamma(t_1)$, $q_2 = \gamma(t_2)$ and $q_3 = \gamma(t_3)$ 
with $t_1<t_2<t_3$ and $t_3-t_1 \le |pq_1| + |pq_3| < 2 D_\kappa - (t_3-t_1)$. 
Let $\tilde p, \tilde q_1, \tilde q_2, \tilde q_3 \in \mathbb M_\kappa$ with $|\tilde p \tilde q_1| = |p q_1|$, $|\tilde p \tilde q_3| = |pq_2|$, $|\tilde q_1 \tilde q_2| = t_2-t_1$, $|\tilde q_3 \tilde q_1| = t_3-t_1$ and $|\tilde q_3 \tilde q_2| = t_3 -t_2$. 
Then we have $|pq_2| \ge |\tilde p \tilde q_2|$. 
\item[(2)] 
Let $p \in X$, $q_1 = \gamma(t_1)$ and $q_2 = \gamma(t_2)$ with $t_2 > t_1$ and $t_2 - t_1< D_\kappa$. 
Let $\tilde p, \tilde q_1, \tilde q_2 \in \mathbb M_\kappa$ with $|\tilde p \tilde q_1| = |pq_1|$, $|\tilde q_1 \tilde q_2| = t_2 - t_1$ and 
\[
- \cos \angle \tilde p \tilde q_1 \tilde q_2 = \limsup_{\tau \to 0+} \frac{|p \gamma(t_1+\tau)| - |p \gamma(t_1)|}{\tau}. 
\]
Then we have $|p q_2| \le |\tilde p \tilde q_2|$.  
\item[(3)]
For $p \in X$ and $t \in [a,b]$ with $|p \gamma(t)| < D_\kappa$, the comparison angle $\tilde \angle_\kappa (|p\gamma(t+\tau)|; \tau, |p\gamma(t)|)$ is non-increasing in $\tau$ for $0 \le \tau < D_\kappa$. 
\end{itemize}
\end{proposition}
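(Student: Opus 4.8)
The plan is to fix a point $p \in X$ and reduce every assertion to the behaviour of the single-variable function $f(t) := |p\gamma(t)|$, compared with the model profile $g(t) := |\tilde p\,\tilde\sigma(t)|$, where $\tilde\sigma$ is a unit-speed geodesic in the model plane $\mathbb M_\kappa$ and $\tilde p$ a suitable model point. The analytic heart is that in $\mathbb M_\kappa$ the profile satisfies $(\rho_\kappa\circ g)'' = 1 - \kappa\,(\rho_\kappa\circ g)$ with equality, so condition (0) says exactly that $w := \rho_\kappa\circ f - \rho_\kappa\circ g$ obeys $w'' + \kappa w \le 0$ in the barrier sense whenever $f$ and $g$ share boundary data. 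In this way all four conditions become incarnations of one differential inequality, and the proof follows the classical chain of equivalences of Alexandrov geometry — chord comparison, hinge comparison and comparison-angle monotonicity — transcribed for the $1$-Lipschitz curve $\gamma$ in place of a geodesic. I would organize it as the cycle $(0)\Rightarrow(1)\Rightarrow(2)\Rightarrow(3)\Rightarrow(0)$.

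For $(0)\Rightarrow(1)$ I would take $g$ to be the model profile matching $f$ at the endpoints $t_1$ and $t_3$; then $w(t_1)=w(t_3)=0$ and $w''+\kappa w\le 0$, so the Sturm maximum principle for the operator $y\mapsto y''+\kappa y$ forces $w\ge 0$ on $[t_1,t_3]$, that is $f(t_2)\ge g(t_2)=|\tilde p\,\tilde q_2|$. The stated length and perimeter thresholds (all distances below $D_\kappa$ and the perimeter below $2D_\kappa$) are precisely what keep the comparison configuration nondegenerate and the interval below the first conjugate distance, so that the maximum principle is legitimate. The step $(1)\Rightarrow(2)$ is the metric-space form of the elementary fact that a function lying above all of its chords lies below all of its tangents: applying the chord comparison to the subtriangles at an intermediate parameter and letting the outer point tend to $\gamma(t_1)$ — an unfolding governed by Alexandrov's lemma — produces the hinge inequality, the limiting hinge angle being read off from the first-variation identity $\cos\angle\tilde p\tilde q_1\tilde q_2 = -\limsup_{\tau\to 0+}(f(t_1+\tau)-f(t_1))/\tau$.

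For $(2)\Rightarrow(3)$, the hinge comparison bounds $|p\gamma(t+\tau)|$ above by the third side of a model hinge whose opening angle is the forward slope of $f$; feeding this into the monotonicity of the law of cosines in the opening angle shows that $\tilde\angle_\kappa(|p\gamma(t+\tau)|;\tau,|p\gamma(t)|)$ is non-increasing in $\tau$. Finally $(3)\Rightarrow(0)$ is where I expect to spend the real effort, since one must descend from an integrated first-order monotonicity statement to a second-order barrier inequality while $f$ is only Lipschitz. Here Lemma \ref{lem:contradiction1} is the decisive device: arguing by contradiction, if (0) failed then $\rho_\kappa\circ f$ — rewritten by the substitution underlying Lemma \ref{lem:sine comp} — would dominate a strictly super-model parabola on a short interval, and that excess would force the comparison angle of (3) to increase somewhere, contradicting monotonicity.

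The main obstacle throughout is exactly this local-to-global transfer in both directions: converting the pointwise differential inequality of (0) into global comparisons, which requires controlling conjugate points and the $D_\kappa$-thresholds so that the Sturm principle and Alexandrov's lemma apply; and, conversely, extracting a genuine barrier-sense second-order inequality from the zeroth- and first-order comparison data of a non-smooth distance profile, for which Lemma \ref{lem:contradiction1} is tailored. The remaining bookkeeping — existence of one-sided slopes of $f$, the first-variation identity, and the reductions to the model ODE — is routine once these two comparison engines are in place.
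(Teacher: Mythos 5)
This proposition is not proved in the paper at all: it is quoted from Perelman--Petrunin \cite[\S 1.4 and 1.5]{PP:qg} as a black box, so there is no in-paper argument to compare yours against. Judged on its own terms, your proposal has a genuine gap, and it sits exactly at the first edge of your cycle, $(0)\Rightarrow(1)$. Under Definition \ref{def:kappa-convex}, condition (0) is \emph{local in both variables}: the barrier inequality $\bigl(\rho_\kappa(|x\gamma(t)|)\bigr)'' \le 1-\kappa\,\rho_\kappa(|x\gamma(t)|)$ is guaranteed only for points $x$ \emph{close to} $\gamma(t_0)$ and only on some small neighborhood $I$ of $t_0$. Your argument applies the Sturm maximum principle to $w=\rho_\kappa\circ f-\rho_\kappa\circ g$ on the whole interval $[t_1,t_3]$ for an \emph{arbitrary} $p\in X$, which presupposes that $f(t)=|p\gamma(t)|$ satisfies the differential inequality globally --- but that is precisely the statement to be proved. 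Patching over the parameter $t$ is harmless (barrier-type inequalities are local in $t$), but passing from points near the curve to arbitrary $p$ is a genuine globalization: since $X$ carries no curvature assumption, one cannot use semiconcavity of $d_p$ in the ambient space, and the naive upper support $t\mapsto |px|+|x\gamma(t)|$ obtained from the triangle inequality does not satisfy the required second-order bound. This local-to-global step (an induction on the distance scale combined with Alexandrov's lemma, in the spirit of the globalization theorem, with extra care at the $D_\kappa$-thresholds when $\kappa>0$) is the actual content of the Perelman--Petrunin result. Your closing paragraph names ``local-to-global transfer'' as the main obstacle, but the proof never supplies the mechanism, so the cycle is broken where all the difficulty lives.

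A secondary point: your allocation of effort is inverted. The direction $(3)\Rightarrow(0)$ is the easy one, because (0) only needs to be verified for points near $\gamma(t_0)$ and on short intervals, where (3) (together with (1) and (2), which are mutually equivalent by routine Alexandrov-lemma and first-variation arguments, as you describe) hands you an explicit upper support function: the model profile $\tau\mapsto|\tilde p\,\tilde\sigma(\tau)|$ with initial slope chosen between the forward and backward slopes of $f$, which satisfies $(\rho_\kappa\circ g)''=1-\kappa\,\rho_\kappa\circ g$ exactly. No contradiction argument is needed there, and invoking Lemma \ref{lem:contradiction1} as you propose would in any case require first establishing its hypothesis \eqref{eq:a1} for $f$ --- the first-order concavity of the profile at the touching parameter --- which itself comes from the chord comparison (1) and which you do not verify. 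In short: the equivalences among (1), (2), (3) and the implication to (0) are the routine part; the implication from the local condition (0) to any of the global ones is the theorem, and your proposal leaves it unproved.
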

Note that the limit superior in (2) is actually the limit due to the monotonicity (3). 

\begin{definition}[\cite{PP:qg}, \cite{Pet:qg}] \label{def:qg} \upshape
A rectifiable curve in a proper geodesic space is called a $\kappa$-{\it quasigeodesic} if it is $\kappa$-convex and is parametrized by arclength. 
\end{definition}

We recall that any quasigeodesic in an Alexandrov space has the forward and backward directions at any time (\cite{Pet:qg}).

\begin{theorem}[{\cite{PP:qg}}] \label{thm:qg characterization}
For a proper geodesic space $X$, if it satisfies that for any $x \in X$, there exist a neighborhood $U$ and $\kappa \in \mathbb R$ such that any two points in $U$ is joined by a minimal geodesic which is a $\kappa$-quasigeodesic in $X$, then $X$ is an Alexandrov space. 

Further, if $\kappa$ is taken independently on the choice of points $x$, then $X$ is an Alexandrov space of curvature $\ge \kappa$. 
\end{theorem}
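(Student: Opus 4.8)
The plan is to verify, near every point, the four‑point inequality that defines an Alexandrov space in Definition \ref{def:Alex}, by recasting the quasigeodesic hypothesis as comparison‑angle monotonicity and then appealing to the standard equivalence among the several characterizations of a lower curvature bound. I would fix $x \in X$ and take the neighborhood $U$ and the constant $\kappa$ furnished by the hypothesis. Passing to a smaller ball $U' = U(x,r) \subset U$ with $r$ so small that every pair of points of $U'$ has distance less than $D_\kappa$, I ensure that all $\kappa$‑comparison angles among points of $U'$ are defined, while the minimal geodesic joining any two points of $U'$ has both endpoints in $U$ and is therefore a $\kappa$‑quasigeodesic of $X$ by assumption; note that its interior need not stay in $U'$, since $\kappa$‑convexity is a property relative to all of $X$.

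The core step is the following. Fix $a,c \in U'$ and let $\gamma \colon [0,|ac|] \to X$ be the minimal geodesic from $a$ to $c$ supplied by the hypothesis, parametrized by arclength; it is $\kappa$‑convex. For any $b \in U'$ I apply the implication $(0)\Rightarrow(3)$ of Proposition \ref{prop:convex curve} with $p=b$ and base parameter $t=0$: the comparison angle $\tilde\angle_\kappa(|b\gamma(\tau)|;\tau,|ba|)$ is non‑increasing in $\tau$. Its value at $\tau=|ac|$ is exactly $\tilde\angle_\kappa b a c$, while its limit as $\tau\to 0+$ exists by monotonicity and equals the angle $\angle_a(b,c)$ between the geodesics issuing from $a$; hence $\angle_a(b,c) \ge \tilde\angle_\kappa b a c$. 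Letting $a,b,c$ range over $U'$ shows that comparison‑angle monotonicity, equivalently the hinge comparison, holds for all minimal geodesics with endpoints in $U'$.

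Finally I would conclude by invoking the equivalence, recalled after Definition \ref{def:Alex}, of the triangle comparison, the hinge comparison, the comparison‑angle monotonicity, and the four‑point condition for a proper geodesic space (\cite{BGP}, \cite{BBI}). This equivalence converts the monotonicity just established into the four‑point inequality of Definition \ref{def:Alex} on $U'$, so $X$ is an Alexandrov space near $x$; as $x$ is arbitrary, $X$ is an Alexandrov space. When $\kappa$ may be chosen independently of $x$, the same neighborhoods carry the common bound $\kappa$, and so $X$ has curvature $\ge \kappa$.

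The main obstacle is precisely this last reduction, from the infinitesimal monotonicity (a statement about one auxiliary point and a single geodesic) to the four‑point inequality $\tilde\angle_\kappa b a c + \tilde\angle_\kappa c a d + \tilde\angle_\kappa d a b \le 2\pi$, because the latter bounds a \emph{sum} of comparison angles and is not a formal consequence of "actual angle dominates comparison angle" alone. This is exactly the foundational content of \cite{BGP}, established there through Alexandrov's lemma on developing adjacent triangles in the model space $\mathbb M_\kappa$, and it enters the present argument only as a cited equivalence. A minor technical point is the initial shrinking to $U'$, which must keep all pairwise distances below $D_\kappa$ so that every comparison angle, and hence the monotonicity statement itself, is meaningful.
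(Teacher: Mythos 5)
First, a point of information: the paper does not prove Theorem \ref{thm:qg characterization} at all --- it imports the statement from \cite{PP:qg} --- so your proposal has to stand on its own. It does not: there is a genuine gap at the final step, and that step is where the entire content of the theorem sits.

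What you extract from the hypothesis is monotonicity of $\tau \mapsto \tilde\angle_\kappa(|b\gamma(\tau)|;\tau,|ba|)$ only at the base parameter $t=0$, i.e.\ the hinge comparison ``angle $\ge$ comparison angle'' for hinges formed by the \emph{chosen} geodesics. You then ask ``the equivalence of the triangle comparison, the hinge comparison, the comparison-angle monotonicity and the four-point condition'' to convert this into Definition \ref{def:Alex}. But the equivalences established in \cite{BGP}, \cite{BBI} are between conditions quantified over \emph{all} geodesics (triangles, hinges) in a neighborhood, whereas your hypothesis controls exactly one minimal geodesic per pair of points; other minimal geodesics between the same endpoints need not be quasigeodesics, and you have verified nothing about them. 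The statement you actually need --- if between every two points of $U$ \emph{some} shortest path satisfies the comparison, then the geodesic-free four-point inequality holds --- is, modulo the dictionary of Proposition \ref{prop:convex curve}, precisely Theorem \ref{thm:qg characterization} itself, so invoking it as ``a cited equivalence'' is circular. And the implication is genuinely not formal: hinge comparison at a point does not imply the four-point inequality there. At the branch point $p$ of a tripod the three legs pairwise make angle $\pi$, which dominates the comparison angles, yet $\tilde\angle_0 apb + \tilde\angle_0 bpc + \tilde\angle_0 cpa = 3\pi > 2\pi$. What rules this out under the quasigeodesic hypothesis is the comparison at \emph{interior} points of the chosen geodesics (equivalently, that the two one-sided comparison angles at an interior point sum to at most $\pi$, so the distance functions have no convex corner along the curve) --- exactly the information you discarded by fixing the base parameter $t=0$, and exactly what fails along the tripod's geodesics.

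A correct proof along your lines must therefore (i) use Proposition \ref{prop:convex curve}(1), or (3) at \emph{all} base parameters (equivalently, endpoint monotonicity applied to all sub-segments of the chosen geodesics, which are again quasigeodesics), so as to obtain the full point-side comparison along the chosen geodesics including at their interior points; and then (ii) prove, not cite, that point-side comparison along one geodesic per pair of points forces the four-point condition --- this is the bound on the sum of the three angles between the chosen geodesics $[ab]$, $[ac]$, $[ad]$ at $a$, obtained via Alexandrov's lemma and the adjacent-angle relations at interior points. Step (ii) is the substance of the theorem in \cite{PP:qg} and is absent from your proposal. Two smaller inaccuracies: the limit $\lim_{\tau\to0+}\tilde\angle_\kappa(|b\gamma(\tau)|;\tau,|ba|)$ need not equal the angle $\angle_a(b,c)$ (with monotonicity it is only sandwiched between $\tilde\angle_\kappa bac$ and that angle, which happens to be all you use), and your conclusion that the comparison ``holds for all minimal geodesics with endpoints in $U'$'' overstates what was shown --- it holds only for the chosen ones, which is the heart of the difficulty above.
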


\subsection{Extremal subsets} \label{subsec:ext}

Extremal subsets in Alexandrov spaces were originally defined in \cite{PP:ext}.

\begin{definition}[\cite{PP:ext}, \cite{Pet:semi}] \upshape \label{def:extremal}
Let $M$ be a connected Alexandrov space. 
A closed subset $E$ of $M$ is said to be {\it extremal} if 
it has one of the following equivalent properties. 
\begin{enumerate}
\item For any $p \in E$ and $q \in M$ with $q \neq p$, if $d_q$ has a local minimum at $p$ on $E$, then $p$ is a critical point for $d_q$, that is, $d_q' \le 0$ on $T_p M$.
\item For any $p \in E$ and any semiconcave function $f$ defined near $p$, its gradient curve starting at $p$ is contained in $E$. 
Here, when $p \in E \cap \partial M$, $f$ was assumed that the canonical extension of $f$ on a neighborhood of $p$ in the double of $M$ is also semiconcave. 
\item For any $p \in E$ and $q \in M$ with $q \neq p$, the gradient curve for $d_q$ starting at $p$ is contained in $E$. 
\end{enumerate}
For $\kappa > 0$ and an extremal subset $E$ of $M$, $E$ is said to be {\it $\kappa$-extremal} in $M$ if it satisfies one of the following. 
\begin{itemize}
\item $E$ consists of at least two points; 
\item $E$ consists of only one point, say $x$ and $B(x, D_\kappa/2) = M$; or
\item $E$ is the empty-set and $\mathrm{diam}\, M \le D_\kappa /2$.
\end{itemize}
When $\kappa \le 0$, any extremal subset is said to be $\kappa$-extremal. 

For a disconnected Alexandrov space $N$ and its closed subset $F$, we say that $F$ is ($\kappa$-)extremal if for each component $N_0$ of $N$, the intersection $N_0 \cap F$ is ($\kappa$-)extremal in $N_0$. 
\end{definition}

Actually, the first three conditions in the definition are equivalent.
See \cite{Pet:semi} for a proof. 
From (3), we know that if a geodesic $\gamma$ meets an extremal subset $E$ at an interior point of $\gamma$, then $\gamma$ must be contained in $E$. 
Further, it is known that each extremal subset satisfies the condition ($\spadesuit$) considered in \S \ref{subsec:length} (\cite{PP:ext}). 
It seems that the condition (2) is complicated, but it is needed when $p$ is a boundary point. 
Indeed, if $p$ is in the boundary of the unit disk $D^2= \{x \in \mathbb R^2 \mid |x| \le 1 \}$ with flat metric, then the function $f := - d(-p, \cdot)^2$ is concave on $D^2$ in our sense. 
However, its canonical extension to the double is not semiconcave and the gradient curve of $f$ starting at $p$ in $D^2$ is not contained in $\partial D^2$. 

The whole space itself and the empty subset are trivially extremal subsets.
The boundary $\partial M$ of an Alexandrov space $M$ is an extremal subset. 

For extremal subsets $E,F \subset M$, $E \cup F$, $E \cap F$ and the closure of $E \setminus F$ are also extremal.
Further, $\bigcap_\alpha E_\alpha$ is extremal if $\{E_\alpha\}$ is a family of extremal subsets.
Each extremal subset admits a canonical stratification into topological manifolds (\cite{PP:ext}). 
The {\it dimension} of an extremal subset $E$ is the maximal dimension of topological manifolds (stratum) embedded in $E$, which coincides with the topological dimension of $E$.

For a closed subset $F$ of $M$ and $p \in F$, we denote by $\Sigma_p F$ the set of all directions which is the limit of geodesic directions $\uparrow_p^{x_i}$ such that $F \setminus \{p\} \ni x_i \to p$.
From the definition, $\Sigma_p F$ is a closed subset of $\Sigma_p$. 
For a point $p$ in an extremal subset $E$ of $M$, $\Sigma_p E$ is a $1$-extremal subset of $\Sigma_p M$.
\begin{remark} \upshape \label{rem:1-extremal}
The interval $[0,\pi]$ of length $\pi$ is an Alexandrov space of curvature $\ge 1$. 
The set $\{0\}$ is extremal in $[0,\pi]$, however it is not $1$-extremal. 
\end{remark}

\begin{remark} \upshape \label{rem:dim 1 case}
Theorem \ref{thm:self gluing} holds when an Alexandrov space has dimension one, because such an Alexandrov space is actually a complete one-manifold possibly with boundary. 
This is a first step of the induction on the dimension of Alexandrov spaces in the proof of Theorem \ref{thm:self gluing}. 
\end{remark}

\begin{remark} \upshape \label{rem:dim 1 case, a}
As mentioned in the introduction, the assumption in Theorem \ref{thm:self gluing} that $M$ has at most two components is needed when $\dim M = 1$ and $M$ has positive curvature, to keep positivity of curvature of the glued space $M_f$. 
Indeed, an interval $I$ of length $\pi$ is an Alexandrov space of curvature $\ge 1$. 
Let $I_i$ denote isometric copies of $I$ for $i=1,2,3$. 
The disjoint union $X = \bigsqcup_{i=1}^3 I_i$ is also an Alexandrov space of curvature $\ge 1$, and its boundary $\partial X = \bigsqcup_{i=1}^3 \partial I_i$ consisting of six points is $1$-extremal in $X$. 
Then, an isometric involution $f$ on $\partial X$ gives the glued space $X_f$ which is a circle of length $3 \pi$. 
So, $X_f$ does not have curvature $\ge 1$. 
\end{remark}

We recall several important properties of extremal subsets. 
\begin{theorem}[Generalized Liberman's lemma \cite{PP:ext}, \cite{Pet:appl}, \cite{Pet:semi}] \label{thm:Liberman}
Let $E$ be an extremal subset of an Alexandrov space $M$ of curvature $\ge \kappa$. 
If $\gamma$ is a minimal geodesic in $E$ with respect to the length metric induced from $M$, then $\gamma$ is a $\kappa$-quasigeodesic in $M$.
\end{theorem}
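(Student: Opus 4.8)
The plan is to induct on $n=\dim M$, reducing the second-order comparison that defines a $\kappa$-quasigeodesic in $M$ to the corresponding first-order comparison in the space of directions, where the relevant subset is $1$-extremal of dimension $n-1$. For $n\le 1$ the space $M$ is a complete one-manifold possibly with boundary, every extremal subset is a union of whole components and of isolated boundary points, and a minimal geodesic in $E$ is either constant or a subarc of a component; in all cases it is a genuine geodesic of $M$, hence trivially $\kappa$-convex. This settles the base case.

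For the inductive step I would first reduce, via Proposition \ref{prop:convex curve}, to fixing a pole $p$ close to the curve and proving that $h(t):=\rho_\kappa(|p\gamma(t)|)$ satisfies $h''\le 1-\kappa h$ in the barrier sense, which is the content of Definition \ref{def:kappa-convex} and of \eqref{eq:rho dist}. Since $\gamma$ is intrinsically minimal in $E$ and condition~($\spadesuit$) makes the intrinsic and ambient lengths of curves in $E$ agree, $\gamma$ is $1$-Lipschitz and arclength-parametrized in $M$ and has forward and backward directions $\gamma^\pm(t)\in\Sigma_{\gamma(t)}E$. The minimality of $\gamma$ forces $\gamma^-(t)$ and $\gamma^+(t)$ to be joined by a minimal geodesic of $\Sigma_{\gamma(t)}E$ realizing their angle; as $\Sigma_{\gamma(t)}E$ is a $1$-extremal subset of $\Sigma_{\gamma(t)}M$ of dimension $n-1$, the inductive hypothesis shows this direction-geodesic is a $1$-quasigeodesic in $\Sigma_{\gamma(t)}M$. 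Feeding this into the first variation formula against $\uparrow_{\gamma(t)}^p$ yields the first-order concavity $\overline h^+(t)\le\underline h^-(t)$ and identifies the slope $A_t=|\cos\angle(\gamma^+(t),\uparrow_{\gamma(t)}^p)|$ appearing in Lemma \ref{lem:sine comp}.

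With first-order concavity established, I would prove the second-order inequality by contradiction. If $h''\le 1-\kappa h$ fails, Lemma \ref{lem:contradiction1}, applied to $h$ with $g:=1-\kappa h$, produces a parameter $t_0$, a constant $A$, and $\epsilon>0$ supporting $h$ strictly from below by a quadratic near $t_0$; geometrically, $\gamma$ bulges away from $p$ strictly more than the $\kappa$-model permits. Passing to the tangent cone $T_{\gamma(t_0)}M=C^0\Sigma_{\gamma(t_0)}M$ and examining the infinitesimal configuration at $\gamma(t_0)$, this strict defect should manifest as a failure of the $1$-convexity of the distance to $\uparrow_{\gamma(t_0)}^p$ along the direction-geodesic of $\Sigma_{\gamma(t_0)}E$ between $\gamma^-(t_0)$ and $\gamma^+(t_0)$, contradicting that this direction-geodesic is a $1$-quasigeodesic. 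Converting the $\rho_\kappa$-inequality into the sine form of Lemma \ref{lem:sine comp} is convenient here, as it isolates precisely the slope term $A_t$ controlled above; the case $\kappa=0$ then follows from the limiting description of the $0$-cone distance. Collecting the resulting inequalities over all poles $p$ gives the $\kappa$-convexity of $\gamma$, hence by Definition \ref{def:qg} that $\gamma$ is a $\kappa$-quasigeodesic.

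The hard part will be this second-order-to-first-order passage, namely showing rigorously that the strict quadratic defect furnished by Lemma \ref{lem:contradiction1} genuinely obstructs $1$-convexity in $\Sigma_{\gamma(t_0)}M$ and is not lost when the configuration is rescaled to the tangent cone. Two further points need care. First, the extremality of $E$ must be used beyond guaranteeing $\gamma^\pm\in\Sigma_{\gamma(t)}E$ and that $\Sigma_{\gamma(t)}E$ is $1$-extremal: the invariance of extremal subsets under gradient curves, Definition \ref{def:extremal}(2) and (3), is what keeps the limiting comparison geodesic inside $\Sigma_{\gamma(t_0)}E$ and prevents minimality from degenerating in the limit. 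Second, at boundary points $\gamma(t_0)\in\partial M$ one must invoke the barrier-sense extension clause in Definition \ref{def:extremal}(2). Once these are handled, the induction closes.
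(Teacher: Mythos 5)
First, context: the paper itself does not prove Theorem \ref{thm:Liberman}; it is quoted from \cite{PP:ext}, \cite{Pet:appl}, \cite{Pet:semi} and used as a black box (e.g.\ in Lemmas \ref{lem:antipodal} and \ref{lem:m-dist comp}), so your attempt must be measured against the literature. There the mechanism is genuinely different from yours: one takes a semiconcave function $f$ (e.g.\ $\rho_\kappa\circ d_p$), assumes the required concavity of $f\circ\gamma$ fails, and uses the \emph{gradient flow} of $f$ --- which maps $E$ into itself precisely by extremality, Definition \ref{def:extremal}(2),(3) --- to push the offending arc of $\gamma$, producing a curve in $E$ with the same endpoints that is strictly shorter; this contradicts minimality of $\gamma$ in $(E,|\cdot,\cdot|_E)$. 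It is a finite-scale variational (curve-shortening) argument: extremality enters through flow-invariance, not through the structure of $\Sigma_x E$ and induction on dimension.

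Your plan has a genuine gap at what you call the hard part, and the gap is fatal rather than technical: a second-order defect cannot be detected in the tangent cone. Concretely, Lemma \ref{lem:contradiction1} gives $h(t)\ge h(t_0)+A(t-t_0)+\bigl(\tfrac{g(t_0)}{2}+\epsilon\bigr)(t-t_0)^2$ near $t_0$; rescaling by $\lambda$ and setting $s=\lambda(t-t_0)$ turns this into $\lambda\bigl(h(t_0+s/\lambda)-h(t_0)\bigr)\ge As+O(s^2/\lambda)$, so the quadratic defect $\epsilon(t-t_0)^2$ scales away and the blow-up limit retains only first-order directional information. This is exactly why any curve with antipodal one-sided directions blows up to a straight ray-pair regardless of whether it satisfies the $\kappa$-comparison; no failure of $1$-convexity in $\Sigma_{\gamma(t_0)}M$ can be extracted, so no contradiction survives the limit. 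Moreover, your first-order step is also gapped: you assume $\gamma^-(t_0)$ and $\gamma^+(t_0)$ are joined by a shortest path of $\Sigma_{\gamma(t_0)}E$ ``realizing their angle''. If $M$ is a flat $2$-cone with small total angle, $E=\partial M$, and $\gamma$ is the intrinsically minimal path through the apex $x$, then $\Sigma_x E$ consists of two isolated points, so no connecting geodesic exists in $\Sigma_x E$ at all; in general the intrinsic distance in $\Sigma_x E$ may exceed both the ambient angle and $\pi$, so the hinge argument against the direction $\uparrow_{x}^{p}$ cannot be run as stated. Both halves of your induction would need to be replaced by the gradient-flow shortening mechanism above.
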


\begin{theorem}[\cite{Pet:appl}] \label{thm:stability}
If Alexandrov spaces $M_i$ converge to $M$ without collapse, and if extremal subsets $E_i \subset M_i$ converges to $E \subset M$ as subsets, then $E_i$ converges to $E$ with respect to the length metrics.
\end{theorem}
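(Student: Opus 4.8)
The plan is to realize the non-collapsing convergence by $\epsilon_i$-approximations $M_i \to M$ and to prove, for arbitrary $x_i,y_i \in E_i$ with $x_i \to x \in E$ and $y_i \to y \in E$, the two-sided estimate $|x_iy_i|_{E_i} \to |xy|_E$; since $E_i \to E$ as subsets, the two semicontinuities together amount to Gromov--Hausdorff convergence of $(E_i,|\cdot,\cdot|_{E_i})$ to $(E,|\cdot,\cdot|_E)$ (the case $|xy|_E=\infty$, i.e.\ $x,y$ in distinct components, being covered by the lower estimate below). As the convergence is non-collapsing we have $\dim M_i=\dim M$, and by the stability of lower curvature bounds (\cite{BGP}) we may assume all $M_i$ and $M$ have curvature $\ge\kappa$ for a common $\kappa$. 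I would treat the lower and upper semicontinuity of the length distances separately.

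For the lower bound $|xy|_E \le \liminf_i |x_iy_i|_{E_i}$ I would use the Generalized Liberman Lemma. Passing to a subsequence realizing the $\liminf$ (assumed finite, else there is nothing to prove), choose minimal geodesics $\gamma_i$ of $(E_i,|\cdot,\cdot|_{E_i})$ from $x_i$ to $y_i$, which exist since $(E_i,|\cdot,\cdot|_{E_i})$ is a proper geodesic space. By Theorem \ref{thm:Liberman} each $\gamma_i$ is a $\kappa$-quasigeodesic in $M_i$, parametrized by arclength on $[0,\ell_i]$ with $\ell_i \to L:=\liminf_i|x_iy_i|_{E_i}<\infty$. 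Being $1$-Lipschitz with uniformly bounded domains, the $\gamma_i$ subconverge (Arzel\`a--Ascoli) to a $1$-Lipschitz curve $\gamma$ in $M$ from $x$ to $y$. Since the comparison conditions characterizing $\kappa$-convexity in Proposition \ref{prop:convex curve} involve only distances between finitely many points, they pass to the limit, so $\gamma$ is a $\kappa$-quasigeodesic; and since $\gamma_i \subset E_i$ with $E_i \to E$ as subsets, $\gamma \subset E$. Lower semicontinuity of length under uniform convergence then gives $|xy|_E \le L(\gamma) \le \liminf_i L(\gamma_i) = L$.

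For the upper bound $\limsup_i |x_iy_i|_{E_i} \le |xy|_E =: \ell$ I would instead lift a minimizer from $E$ into the $E_i$ by the gradient machinery, and this is the part I expect to be the main obstacle. Let $\sigma:[0,\ell]\to E$ be a minimal geodesic of $(E,|\cdot,\cdot|_E)$; by Theorem \ref{thm:Liberman} it is a $\kappa$-quasigeodesic in $M$ whose forward directions $\sigma^+(t)$ lie in $\Sigma_{\sigma(t)}E$. Fix a fine partition $0=t_0<\dots<t_N=\ell$ with nodes $p_j=\sigma(t_j)$, and reproduce each short arc inside $E_i$: transfer $p_j$ to $p_j^i \in E_i$ with $p_j^i\to p_j$ and the direction $\sigma^+(t_j)$ to some $\xi_j^i\in\Sigma_{p_j^i}E_i$, run the radial $\kappa$-quasigeodesic $t\mapsto\gexp_{p_j^i}^\kappa(t\,\xi_j^i)$ of length $t_{j+1}-t_j$, and concatenate. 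Because $\gexp^\kappa$ is built from gradient curves of distance functions, which preserve extremal subsets (Definition \ref{def:extremal}(3), together with the $1$-extremality of $\Sigma_{p}E$), each such arc stays in $E_i$; by the stability of gradient curves under non-collapsing convergence (\cite{PP:qg}, \cite{Pet:semi}) its endpoint lies within $o_i(1)$ of $p_{j+1}^i$, while its length is exactly $t_{j+1}-t_j$. Concatenating and closing the $o_i(1)$ gaps inside $E_i$ produces a curve in $E_i$ from $x_i$ to $y_i$ of length $\le \ell + o(1)$, which yields the bound. The crux, where I expect the genuine work, is the transfer of directions, namely that $\Sigma_{p_j^i}E_i \to \Sigma_{p_j}E$ so that the $\xi_j^i$ exist and converge: this is the infinitesimal form of the very stability being proved, and controlling it — together with choosing $N$ large and then $i$ large so that all error terms sum to $o(1)$ — is the technical heart of the argument.
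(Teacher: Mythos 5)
First, a remark on the comparison itself: the paper does not prove Theorem \ref{thm:stability} at all --- it is quoted from Petrunin's paper \cite{Pet:appl} and used as a black box (e.g.\ inside Lemma \ref{lem:isom emb}). So there is no in-paper proof to match your argument against; what follows is an assessment of your proposal on its own merits.

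Your lower-semicontinuity half is correct and is the standard argument: minimal geodesics of $(E_i,|\cdot,\cdot|_{E_i})$ are $\kappa$-quasigeodesics in $M_i$ by Theorem \ref{thm:Liberman}, they subconverge by Arzel\`a--Ascoli to a curve lying in $E$ (Hausdorff convergence of the $E_i$), and lower semicontinuity of length gives $|xy|_E\le\liminf_i|x_iy_i|_{E_i}$. (For this half you do not even need the limit curve to be a quasigeodesic, so the passage-to-the-limit of the conditions in Proposition \ref{prop:convex curve} is dispensable.)

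The upper-semicontinuity half, however, contains genuine gaps --- and they sit exactly where the content of the theorem lies, as you yourself suspect. (a) The transfer of directions requires producing $\xi_j^i\in\Sigma_{p_j^i}E_i$ converging to $\sigma^+(t_j)\in\Sigma_{p_j}E$ in a sense strong enough that the radial curves $t\mapsto\gexp^\kappa_{p_j^i}(t\,\xi_j^i)$ converge to a curve tracking $\sigma$. This is not a routine technicality that can be deferred: spaces of directions are \emph{not} continuous under non-collapsing convergence (angles between chords are only semicontinuous --- smooth surfaces converging to a cone already show $\Sigma_{p_i}M_i\not\to\Sigma_pM$), so the existence of such $\xi_j^i$ is precisely the infinitesimal (blow-up) form of the statement being proved, as you say; invoking it is circular. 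Any attempt to manufacture $\xi_j^i$ from $E_i$-shortest paths to a lifted point $q_i$ runs into the same circle, since controlling $|p_j^iq_i|_{E_i}$ is again the upper bound one wants. (b) ``Closing the $o_i(1)$ gaps inside $E_i$'' silently uses that an ambient distance $o_i(1)$ between points of $E_i$ can be closed by a curve in $E_i$ of intrinsic length $o_i(1)$, \emph{uniformly in $i$}. Condition ($\spadesuit$) and Lemma \ref{lem:ext isom} give such control only for a fixed space, with constants depending on the point and the space (the remark after Lemma \ref{lem:ext isom} shows the comparison is not even uniform near a corner of a fixed rectangle); a uniform-in-$i$ version is, once more, a local form of the theorem itself. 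So the proposal proves one inequality and, for the other, reduces the theorem to two unproved statements each essentially equivalent to it. A minor further point: radial $\gexp$-curves are $1$-Lipschitz, so each arc has length $\le t_{j+1}-t_j$ rather than exactly $t_{j+1}-t_j$ --- harmless for an upper bound, but the equality claim is not justified.
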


\begin{lemma} \label{lem:ext isom}
Let $E$ be an extremal subset of an Alexandrov space $M$ and $p \in E$. 
For any $\epsilon > 0$, there is $\delta > 0$ such that for any $q \in B(p,\delta) \cap E$, we have $|pq|/|pq|_E < 1 +\epsilon$. 
\end{lemma}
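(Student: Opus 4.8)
The inequality exactly as written is immediate: by ($\spadesuit$) one always has $|pq| \le |pq|_E$, so $|pq|/|pq|_E \le 1 < 1+\epsilon$. The substantive content — and what is actually needed later — is the reverse comparison $|pq|_E \le (1+\epsilon)|pq|$, i.e. that the induced length metric becomes infinitesimally close to the ambient metric as $E \ni q \to p$. The plan is to prove
\[
\limsup_{E \ni q \to p} \frac{|pq|_E}{|pq|} \le 1,
\]
which together with ($\spadesuit$) forces $|pq|_E/|pq| \to 1$ (hence also $|pq|/|pq|_E \to 1$). I would argue by contradiction: assume there are $\epsilon_0 > 0$ and $E \ni q_i \to p$ with $|pq_i|_E \ge (1+\epsilon_0)|pq_i|$.

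The main tool is a blow-up at $p$. First I would set $r_i = 1/|pq_i| \to \infty$ and pass to the rescalings $(r_i M, p)$, which converge in the pointed Gromov--Hausdorff sense to the tangent cone $(T_p M, o_p)$, the Euclidean cone over the compact space of directions $\Sigma_p M$; the convergence is non-collapsing because the dimension is preserved, and $T_p M$ is proper. Next I would identify the limit of the rescaled subsets: by the structure theory of extremal subsets (\cite{PP:ext}), the blow-ups $r_i E$ converge, as subsets of $r_i M$, to the tangent cone of $E$ at $p$, which is the radial subcone $E_\infty := C^0(\Sigma_p E) \subset T_p M$ over the $1$-extremal subset $\Sigma_p E \subset \Sigma_p M$; in particular $E_\infty$ is a genuine cone, is itself extremal in $T_p M$, and contains $o_p$. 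Since $r_i|pq_i| = 1$ and $q_i \in r_i E$, a subsequence of $q_i$ converges to a point $\xi \in E_\infty$ with $|o_p \xi| = 1$; writing $\xi = 1\cdot\eta$, the definition of $\Sigma_p E$ gives $\eta = \lim_i \uparrow_p^{q_i} \in \Sigma_p E$.

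The decisive step is to pass the \emph{length} metric to the limit. Here I would invoke the stability theorem (Theorem~\ref{thm:stability}): since $r_i E \to E_\infty$ without collapse, the associated length metrics also converge, so, taking the second point to be the basepoint,
\[
r_i\,|pq_i|_E = |pq_i|_{r_i E} \longrightarrow |o_p \xi|_{E_\infty}.
\]
Because $E_\infty = C^0(\Sigma_p E)$ is a cone and $\eta \in \Sigma_p E$, the radial segment $t \mapsto t\eta$, $t \in [0,1]$, lies entirely in $E_\infty$ and has length $1 = |o_p\xi|$; hence $|o_p\xi|_{E_\infty} = |o_p\xi| = 1$. Combining this with $r_i|pq_i| = 1$ yields $|pq_i|_E/|pq_i| \to 1$, contradicting the choice of the $q_i$, and the lemma follows.

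The hard part will be the second paragraph. The delicate point is not the Hausdorff convergence $r_i E \to E_\infty$ but the convergence of the \emph{intrinsic} length metrics, since these are in general only lower semicontinuous under Hausdorff convergence; this is exactly what Theorem~\ref{thm:stability} supplies. One must also check carefully that the limit subset is precisely the radial cone $C^0(\Sigma_p E)$, so that the minimizing segment from $o_p$ to $\xi$ stays inside $E_\infty$ — this is where the structure theory of \cite{PP:ext}, together with Liberman's lemma (Theorem~\ref{thm:Liberman}) relating radial directions in $\Sigma_p E$ to $\kappa$-quasigeodesics, enters.
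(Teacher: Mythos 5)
Your proof is correct, but at the decisive step it takes a genuinely different route from the paper's. Both arguments start the same way: both read the statement in its intended (non-trivial) form --- as you note, the ratio as literally written is always $\le 1$, and the paper's own proof implicitly makes the same correction, since its contradiction hypothesis $|pq_i|/|pq_i|_E \ge 1+\epsilon$ is vacuous unless the ratio is inverted --- and both argue by contradiction via a blow-up at $p$. The difference is in how the intrinsic metric of $E$ is passed to the limit. The paper works at the level of \emph{curves}: the intrinsic geodesics $\gamma_i$ in $E$ are $\kappa$-quasigeodesics in $M$ by Theorem \ref{thm:Liberman}; non-collapsing convergence preserves quasigeodesics, so the limit curve $\gamma$ is a $0$-quasigeodesic in $T_pM$ starting at the vertex $o$; a quasigeodesic sharing its initial point and initial direction with a minimal geodesic must coincide with it, and radial rays from $o$ are minimal, so $\gamma$ is radial; hence $L(\gamma)=|o\,q_\infty|$, contradicting the assumed definite gap between ambient and intrinsic distances. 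You instead work at the level of \emph{distances}: Theorem \ref{thm:stability} upgrades the Hausdorff convergence $r_iE \to E_\infty$ to convergence of the induced length metrics, and the cone structure $E_\infty=C^0(\Sigma_pE)$ supplies the radial segment giving $|o_p\xi|_{E_\infty}=|o_p\xi|=1$. What each approach buys: yours is shorter and avoids the curve-convergence and quasigeodesic-rigidity steps, but it outsources the analytic content to the heavier imported Theorem \ref{thm:stability} (whose proof in \cite{Pet:appl} itself runs through the same quasigeodesic machinery), and it genuinely requires the structure-theoretic identification of the blow-up limit of $E$ with the radial cone over $\Sigma_pE$ from \cite{PP:ext} --- a fact the paper's version can essentially do without, since radiality of $\gamma$ there comes from quasigeodesic rigidity alone rather than from the shape of $T_pE$. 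There is no circularity in your use of Theorem \ref{thm:stability}: it is an external black box in this paper, and the paper itself applies it in exactly this pointed blow-up setting in the proof of Lemma \ref{lem:isom emb}.
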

\begin{proof}
Suppose the contrary, that is, there exist $\epsilon > 0$ and a sequence $q_i \in E$ with $q_i \to p$ such that $|pq_i|/|pq_i|_E \ge 1 + \epsilon$. 
Let $\gamma_i$ be a minimal geodesic between $p$ and $q_i$ with respect to the induced length metric on $E$. 
We take the tangent cone $T_p M$ as the limit of a scaling sequence $(\frac{1}{|pq_i|_E}M, p)$. 
Then, a sequence of curves $\gamma_i$ converges to a curve $\gamma$ contained in $T_p E$. 
Let $q_\infty \in T_p E$ denote the limit of $q_i$. 
Since, in general, a non-collapsing convergence preserves quasigeodesics (\cite{Pet:qg}, \cite{Pet:semi}), the curve $\gamma$ is a $0$-quasigeodesic in $T_p M$. 
Further, in general, for two curves $\alpha$ and $\beta$ in an Alexandrov space with $\alpha(0)=\beta(0)$, if $\alpha$ is a minimal geodesic and $\beta$ is a quasigeodesic satisfying $\alpha^+(0)=\beta^+(0)$, then by the $\kappa$-convexity of $\beta$ for some $\kappa$, we have $\alpha = \beta$ on the common domain.  
Therefore, in the cone $T_p M$, $\gamma$ must be contained in a geodesic ray starting $o$. 
This is a contradiction to the assumption $|q_\infty|/L(\gamma) \ge 1+\epsilon$. 
\end{proof}

\begin{remark} \upshape
As mentioned above, extremal subsets satisfy the condition ($\spadesuit$). 
It seems that Lemma \ref{lem:ext isom} states that the original metric and the length metric on extremal subsets are locally almost isometric. 
However, this is not true. 
Indeed, if $p$ is a corner point of a rectangle $X$, then points $q, r \in \partial X$ can be taken to be arbitrary close to $p$ such that $|qr|/|qr|_{\partial X} = \sqrt 2 /2$. 
\end{remark}

The following Lemma \ref{lem:dir approx} was proved in the case $E = \partial M$ in \cite{Pet:appl}, however its proof works for general extremal subsets. 
We give a proof of it for the completeness. 

\begin{lemma}[cf.\! {\cite[Lemma 2.2]{Pet:appl}}] \label{lem:dir approx}
Let $E$ be an extremal subset in an Alexandrov space $M$. 
For $p \in E$, $\eta \in \Sigma_p E$ and $\epsilon > 0$, there exists an shortest path $\alpha$ starting at $p = \alpha(0)$ in $E$ with respect to the length metric such that $\angle (\eta, \alpha^+) < \epsilon$. 

Further, for any $\epsilon > 0$, there is $\delta > 0$ such that 
for any $\eta \in \Sigma_p E$, if $q \in E \cap B(p,\delta) \setminus \{p\}$ satisfies $\angle(\uparrow_p^q, \eta) < \delta$, 
then we have $\angle(\alpha^+, \eta) < \epsilon$, where $\alpha$ is a minimal geodesic in $(E, |\cdot,\cdot|_E)$ from $p$ to $q$. 
\end{lemma}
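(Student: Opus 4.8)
The plan is to reduce both assertions to the behaviour of length--minimizing geodesics of $(E,|\cdot,\cdot|_E)$ under a blow--up at $p$, in the spirit of the proof of Lemma \ref{lem:ext isom}. The two inputs are Theorem \ref{thm:Liberman}, by which every minimal geodesic of $(E,|\cdot,\cdot|_E)$ is a $\kappa$--quasigeodesic in $M$, and Lemma \ref{lem:ext isom}, which yields $|pq|/|pq|_E \to 1$ as $E \ni q \to p$. For the first (existence) statement, choose by the definition of $\Sigma_p E$ points $x_i \in E \setminus \{p\}$ with $x_i \to p$ and $\uparrow_p^{x_i} \to \eta$, and let $\alpha_i$ be a minimal geodesic of $(E,|\cdot,\cdot|_E)$ from $p$ to $x_i$, which is a $\kappa$--quasigeodesic in $M$ by Theorem \ref{thm:Liberman}. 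Rescaling so that $(\tfrac{1}{|px_i|_E} M, p) \to (T_p M, o_p)$ and passing to a subsequence, the curves $\alpha_i$ converge to a quasigeodesic $\alpha_\infty$ in $T_p M$ issuing from $o_p$, since non--collapsing limits preserve quasigeodesics (as already used in Lemma \ref{lem:ext isom}). Because $L(\alpha_\infty) \le 1$ while $\alpha_\infty$ joins $o_p$ to the limit $x_\infty$ of the rescaled $x_i$, and $|o_p x_\infty| = \lim |px_i|/|px_i|_E = 1$ by Lemma \ref{lem:ext isom}, the curve $\alpha_\infty$ is a minimal geodesic, that is, the ray in the direction $\uparrow_{o_p}^{x_\infty} = \lim \uparrow_p^{x_i} = \eta$; hence $\alpha_\infty^+(0) = \eta$.

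The heart of the matter is then the convergence of initial directions $\alpha_i^+(0) \to \alpha_\infty^+(0) = \eta$. I would deduce this from the stability of quasigeodesics (\cite{Pet:qg}, \cite{Pet:semi}): by compactness of $\Sigma_p M$ the directions $\alpha_i^+(0)$ subconverge to some $v$, and since a quasigeodesic is determined by its initial point and direction, the limit of the $\alpha_i$ is the quasigeodesic with initial direction $v$; comparing with $\alpha_\infty$ forces $v = \alpha_\infty^+(0) = \eta$, so the full sequence converges. Thus $\angle(\alpha_i^+, \eta) < \epsilon$ for large $i$, and $\alpha = \alpha_i$ proves the first statement. For the uniform statement I would argue by contradiction: if it fails there are $\epsilon > 0$, $\delta_i \to 0$, $\eta_i \in \Sigma_p E$ and $q_i \in E \cap B(p,\delta_i) \setminus \{p\}$ with $\angle(\uparrow_p^{q_i}, \eta_i) < \delta_i$ but $\angle(\alpha_i^+, \eta_i) \ge \epsilon$, where $\alpha_i$ is a minimal geodesic of $(E,|\cdot,\cdot|_E)$ from $p$ to $q_i$. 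Since $\Sigma_p E$ is compact I may assume $\eta_i \to \eta_\infty \in \Sigma_p E$, and then $\uparrow_p^{q_i} \to \eta_\infty$ because $\angle(\uparrow_p^{q_i}, \eta_i) \to 0$. Running the blow--up of the previous paragraph with $q_i$ in place of $x_i$ gives $\alpha_i^+(0) \to \eta_\infty$, whence $\angle(\alpha_i^+, \eta_i) \to \angle(\eta_\infty, \eta_\infty) = 0$, contradicting $\angle(\alpha_i^+, \eta_i) \ge \epsilon$.

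The step I expect to be the main obstacle is precisely this convergence of the forward directions $\alpha_i^+(0)$: the argument of Lemma \ref{lem:ext isom} only required convergence of the curves and of their lengths, whereas here I must upgrade it to convergence in $\Sigma_p M$ of the initial directions. I expect this to follow cleanly from Petrunin's stability of quasigeodesics together with the uniqueness of a quasigeodesic with prescribed initial data, but it is the point that has to be handled with care.
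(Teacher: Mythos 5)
Your setup is the same as the paper's (a sequence $q_i \to p$ with $\uparrow_p^{q_i} \to \eta$, the minimal geodesics $\alpha_i$ of $(E,|\cdot,\cdot|_E)$, Theorem \ref{thm:Liberman}, Lemma \ref{lem:ext isom}, and a blow-up at $p$), and you correctly identify the crux: convergence of the initial directions $\alpha_i^+(0)$. But your resolution of that crux does not work. The claim that ``a quasigeodesic is determined by its initial point and direction'' is false in Alexandrov spaces: quasigeodesics branch at singular points. Concretely, in the cone over a circle of length $\pi$ (that is, $\mathbb R^2/\pm$), a radial geodesic reaching the apex extends past the apex as a quasigeodesic along \emph{every} radial ray: since the space of directions at the apex has diameter $\pi/2$, the development of the distance function from any point stays convex at the apex for any radial continuation (the backtracking curve, being the projection of a straight line under the isometric quotient, is one such continuation). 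So there are infinitely many quasigeodesics with the same initial point and initial direction, and cone-type singularities of exactly this kind are what one must expect at points of a codimension one extremal subset. Moreover, even if uniqueness held, your argument would be circular: to identify ``the limit of the $\alpha_i$'' with ``the quasigeodesic with initial direction $v$'' you must already know that the forward direction of the limit curve equals $v = \lim \alpha_i^+(0)$. Stability of quasigeodesics under non-collapsing convergence only says that the limit curve is a quasigeodesic; it says nothing about convergence of forward directions, and that continuity of directions is precisely the statement to be proven (for general convergent curves it is simply false).

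What the paper does instead is a quantitative rigidity argument, with no appeal to uniqueness. Suppose, after passing to a subsequence, $\alpha_i^+(0) \to \xi$ with $\angle(\xi,\eta)\ge\epsilon$. Choose an honest geodesic $pr$ of $M$ with $\angle(\uparrow_p^r,\xi)<\epsilon/6$, and let $r_i \in pr$ with $|pr_i| = t_i := L(\alpha_i)$. The hinge comparison for quasigeodesics, Proposition \ref{prop:convex curve}(2), applied to the hinge formed by $\alpha_i$ and $pr$ (whose angle at $p$ is $<\epsilon/6 + o(1)$), gives
\[
\csk |r_i q_i| \le \csk^2 t_i + \kappa\, \snk^2 t_i \cos(\epsilon/6),
\]
so the endpoint $q_i = \alpha_i(t_i)$ cannot drift far from $r_i$. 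Combined with $|pq_i|/t_i \to 1$ (Lemma \ref{lem:ext isom}), this forces the comparison angle $\tilde\angle_\kappa q_i p r_i$ to be eventually less than $\epsilon/5$; since this comparison angle converges to $\angle(\eta,\uparrow_p^r)$ under the blow-up, one gets $\angle(\eta,\xi) \le \angle(\eta,\uparrow_p^r) + \angle(\uparrow_p^r,\xi) \le 2\epsilon/3$, a contradiction. Note also that the uniqueness statement which \emph{is} true and is used in the proof of Lemma \ref{lem:ext isom} --- a quasigeodesic whose initial direction agrees with that of a minimal geodesic must coincide with that geodesic --- cannot substitute here, because $v$ need not be the direction of any geodesic of $M$: directions in $\Sigma_p E$ are in general not geodesic directions, which is the very reason the lemma is needed. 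Your compactness argument deducing the uniform (``Further'') statement from the sequential one is structurally fine, but it inherits the gap.
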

\begin{proof}
Let $\eta \in \Sigma_p E$. 
We take a sequence $q_i \in E$ such that $q_i \to p$ and $\uparrow_p^{q_i} \to \eta$ in $\Sigma_p$ as $i \to \infty$. 
Let us fix a minimal geodesic $\gamma_i$ from $p$ to $q_i$ in $(E, |\cdot,\cdot|_E)$. 
We recall that $\gamma_i$ is a quasigeodesic in $M$ due to Theorem \ref{thm:Liberman}. 
Hence, it has the direction $\gamma_i^+ \in \Sigma_p E$. 
We prove that $\gamma_i^+$ converges to $\eta$. 
Suppose that there is an $\epsilon > 0$ such that $\angle(\gamma_i^+, \eta) \ge \epsilon$ for all $i$. 
Passing to subsequence, we may assume that $\gamma_i^+$ converges to some direction $\xi \in \Sigma_p$. 
Let us take a point $r \in M$ such that $\angle(\uparrow_p^r, \xi) < \epsilon /6$. 
Let us take $r_i \in pr$ with $|p r_i| = L(\gamma_i)$.
Further, we have 
\begin{equation} \label{eq:00}
\lim_{i \to \infty} \frac{|pq_i|}{L(\gamma_i)} = 1.
\end{equation}
We set $t_i := |p r_i| = L(\gamma_i)$ and $s_i := |pq_i|$. 
By Proposition \ref{prop:convex curve} (2), 
we have 
\begin{equation} \label{eq:01}
\csk |r_i q_i| \le \csk^2\, t_i + \kappa\, \snk^2\, t_i \cos (\epsilon/6)
\end{equation}
for large $i$, where $\kappa$ is a lower curvature bound at $p$ with $\kappa < 0$. 
We show that the comparison angle $\tilde \varphi_i := \tilde \angle_\kappa q_i p r_i$ is small enough. 
Indeed, since 
\begin{align*}
\kappa \cos \tilde \varphi_i &= 
\frac{\csk\, |r_i q_i| - \csk\, t_i\, \csk\, s_i}{\snk\, t_i \,\snk\, s_i}, 
\end{align*}
by the inequality \eqref{eq:01} and the relation \eqref{eq:00}, letting $i \to \infty$, we have 
\[
\lim_{i \to \infty} \kappa \cos \tilde \varphi_i \le \kappa \cos (\epsilon / 6).
\]
Since $\kappa < 0$, we obtain $\tilde \varphi_i < \epsilon / 5$. 
Because $\tilde \varphi_i \to \angle (\eta, \uparrow_p^r)$, we have 
\[
\angle(\eta, \xi) \le \angle (\eta, \uparrow_p^r) + \angle(\uparrow_p^r, \xi) \le 2 \epsilon /3. 
\]
This contradicts to $\angle (\eta, \xi) \ge \epsilon$. 
This completes the proof. 
\end{proof}

\begin{lemma} \label{lem:isom emb}
Let $X$ and $Y$ be Alexandrov spaces and $E \subset X$ and $F \subset Y$ their extremal subsets. 
Let $h : E \to F$ be an isometric embedding with respect to the induced length metric. 
Then, for any $p \in E$, there is an isometric embedding 
\[
h_p' : \Sigma_p E \to \Sigma_{h(p)} F
\]
with respect to the induced length metric such that 
\[
h_p'(\gamma^+(0)) = (h \circ \gamma)^+(0)
\]
for any non-trivial minimal geodesic $\gamma$ in $(E, |\cdot,\cdot|_E)$ with $\gamma(0)=p$. 
\end{lemma}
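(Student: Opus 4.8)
The plan is to pass the isometry $h$ to the tangent cones at $p$ and $h(p)$ by a blow-up argument, and to read off $h_p'$ from the induced isometry of cones. First I would rescale: consider the pointed spaces $(\lambda X, p)$ and $(\lambda Y, h(p))$ for $\lambda \to \infty$, which converge to the tangent cones $(T_p X, o_p)$ and $(T_{h(p)} Y, o_{h(p)})$. Under these blow-ups the extremal subsets $E$ and $F$ converge, as subsets, to $T_p E = C^0 \Sigma_p E$ and $T_{h(p)} F = C^0 \Sigma_{h(p)} F$, and since the convergences are non-collapsing, Theorem \ref{thm:stability} guarantees that $(\lambda E, \lambda |\cdot,\cdot|_E)$ and $(\lambda F, \lambda |\cdot,\cdot|_F)$ converge to $T_p E$ and $T_{h(p)} F$ with respect to the \emph{length} metrics. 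Because $h$ is an isometric embedding for the length metrics and rescaling by the common factor $\lambda$ preserves this, the maps $h$ between the rescaled spaces form a family of isometric embeddings; passing to the limit yields an isometric embedding $\Phi : T_p E \to T_{h(p)} F$ for the length metrics on the cones, with $\Phi(o_p) = o_{h(p)}$.

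Next I would show that $\Phi$ respects the cone structure. The radial segment $t \mapsto t\xi$ from the vertex to a vector $a\xi$ lies in $T_p E$ and realizes the length-metric distance from $o_p$ to $a\xi$, which equals $a$; moreover radial segments are the \emph{unique} minimal geodesics issuing from the vertex of a Euclidean cone. Since $\Phi$ is a length isometry fixing the vertex, it preserves the norm $|v| = |v\, o_p|$ and sends each such geodesic to a geodesic from $o_{h(p)}$, which must again be radial. Hence $\Phi(a\xi) = a\, \Phi_1(\xi)$ for a map $\Phi_1 : \Sigma_p E \to \Sigma_{h(p)} F$, and I set $h_p' := \Phi_1$. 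To see that $h_p'$ is an isometric embedding for the length metrics on the spaces of directions, I would use that, by the cone length formula, a curve in the unit sphere $\Sigma_p E \subset T_p E$ has the same length whether measured in $\Sigma_p E$ or in $T_p E$; since $\Phi$ preserves lengths of curves and carries the unit sphere of $T_p E$ into that of $T_{h(p)} F$, the length metric on $\Sigma_p E$ is transported isometrically by $\Phi_1$.

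Finally I would verify the compatibility $h_p'(\gamma^+(0)) = (h \circ \gamma)^+(0)$. A non-trivial minimal geodesic $\gamma$ in $(E, |\cdot,\cdot|_E)$ with $\gamma(0) = p$ is a $\kappa$-quasigeodesic in $X$ by Theorem \ref{thm:Liberman}, so $\gamma^+(0) \in \Sigma_p E$ exists; by Lemma \ref{lem:ext isom}, $|p\gamma(\epsilon)|_X / \epsilon \to 1$, so under the blow-up $\lambda\gamma$ converges to the radial ray in direction $\gamma^+(0)$ in $T_p E$. As $h$ is a length isometry, $h \circ \gamma$ is a minimal geodesic in $(F, |\cdot,\cdot|_F)$, and the same argument shows $\lambda(h \circ \gamma)$ converges to the radial ray in direction $(h \circ \gamma)^+(0)$. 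Since $\Phi$ is the limit of $h$, it carries the first ray to the second, whence $\Phi_1(\gamma^+(0)) = (h \circ \gamma)^+(0)$.

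The main obstacle, and the reason the blow-up is unavoidable, is that $h$ preserves only the \emph{length} metric on $E$, not the ambient metric; as the corner example in the Remark following Lemma \ref{lem:ext isom} shows, the ambient and intrinsic distances between two nearby points of $E$ can differ by a definite factor, so one cannot argue directly with ambient comparison angles in $\Sigma_p X$. The delicate points to check carefully are therefore the non-collapsing hypothesis needed to invoke Theorem \ref{thm:stability}, so that the \emph{length} metrics rather than merely the ambient metrics converge; the identification of the length metric on $T_p E$ with the cone over $(\Sigma_p E, |\cdot,\cdot|_{\Sigma_p E})$; and the fact that radial segments are the unique vertex geodesics, which is what forces $\Phi$ to split as a cone map.
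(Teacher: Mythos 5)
Your proposal is correct, and its decisive ingredient --- blowing up $(\lambda X, p)$, $(\lambda Y, h(p))$ and invoking Theorem \ref{thm:stability} to get convergence of the \emph{intrinsic} metrics of the rescaled extremal subsets, hence a limiting intrinsic isometric embedding $T_p E \to T_{h(p)}F$ of tangent cones --- is exactly the ingredient the paper's proof rests on; but the organization is genuinely different. The paper first \emph{defines} $h_p'$ on geodesic directions by $h_p'(\gamma^+(0)) := (h\circ\gamma)^+(0)$, extends it to an arbitrary $\eta \in \Sigma_p E$ by approximating $\eta$ with geodesic directions via Lemma \ref{lem:dir approx}, and checks well-definedness of the limit using the condition ($\spadesuit$) for $\Sigma_p E$, Theorem \ref{thm:stability} and Lemma \ref{lem:ext isom}; only at the end does it remark that $h_p'$ coincides with the restriction to $\Sigma_p E$ of the blow-up limit $\hat h$, which is what supplies the isometric-embedding property. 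You instead take the blow-up limit $\Phi$ as the definition and recover the map of spaces of directions by the cone-splitting argument (a vertex-fixing intrinsic isometric embedding of Euclidean cones sends radial geodesics to radial geodesics, hence is a cone over a map of the unit spheres), and you get compatibility with geodesic directions from the convergence of $\lambda\gamma$ to the radial ray. This buys a cleaner logical structure --- definition, well-definedness, and the embedding property all come from one convergence argument, and Lemma \ref{lem:dir approx} is not needed at all --- while the paper's route makes the compatibility $h_p'(\gamma^+(0)) = (h\circ\gamma)^+(0)$ true by definition and stays at the level of directions and comparison angles. One shared delicate point deserves emphasis: identifying the induced length metric on $T_p E$ with the $0$-cone over $(\Sigma_p E, |\cdot,\cdot|_{\Sigma_p E})$ pins down intrinsic distances in $\Sigma_p E$ only after truncation at $\pi$, so to conclude that $\Phi_1$ preserves intrinsic distances (rather than their truncations) one needs either the fact that intrinsic distances in $\Sigma_p E$ do not exceed $\pi$ (true for $1$-extremal subsets, via the generalized Liberman theorem) or a curve-subdivision argument on top of your length-preservation observation; the paper's appeal to ``$\hat h$ restricted to $\Sigma_p E$'' silently requires the same identification, so this is not a defect of your route relative to the paper's, and you correctly flag it as a point to verify.
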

\begin{proof}
We define a map $h_p' : \Sigma_p E \to \Sigma_{h(p)} F$ as follows. 
Let $\gamma$ be a non-trivial minimal geodesic in $(E, |\cdot,\cdot|_E)$ with $\gamma(0)=p$.
Since $h$ is an isometric embedding in the length metric, $h \circ \gamma$ is a minimal geodesic in $(F, |\cdot,\cdot|_F)$, the direction $(h \circ \gamma)^+(0)$ is determined as a point in $\Sigma_{h(p)} F$. 
Later, it is known that this gives a well-posed map $h_p' : \Sigma_p E \to \Sigma_{h(p)} F$. 
For an arbitrary direction $\eta \in \Sigma_p E$ and $\epsilon > 0$, by Lemma \ref{lem:dir approx}, there is a minimal geodeisc $\alpha_\epsilon$ in $(E, |\cdot,\cdot|_E)$ with $\alpha_\epsilon(0)=p$ such that 
\begin{equation} \label{eq:approx}
\angle_p (\alpha_\epsilon^+(0), \eta) < \epsilon \text{ and } \frac{|p\alpha_\epsilon(t)|}{t} = 1 + O(t)
\end{equation}
for $t < \epsilon$, where $\angle_p$ is the distance function on $\Sigma_p$.
Let us set $h_p'(\eta) := \lim_{\epsilon \to 0} (h \circ \alpha_\epsilon)^+(0)$. 
We claim that this map is well-defined. 
Indeed, for another minimal geodeisc $\beta_\epsilon$ in $(E, |\cdot,\cdot|_E)$ with $\beta_\epsilon(0)=p$ satisfying \eqref{eq:approx}, 
by the condition ($\spadesuit$) for $\Sigma_p E$ at $\eta$, 
we have $|\alpha_\epsilon^+(0) \beta_\epsilon^+(0)|_{\Sigma_p E} < C \epsilon$ for some $C > 0$. 
Further, by Theorem \ref{thm:stability}, we obtain 
\[
\tilde \angle (|\alpha_\epsilon(t) \beta_\epsilon(t)|_E; t, t) < C \epsilon + O(t). 
\]
Since $h$ is the isometric embedding in the length metric, 
\[
\tilde \angle (|h \circ \alpha_\epsilon(t) h \circ \beta_\epsilon(t)|_F; t, t) < C \epsilon + O(t)
\]
holds. 
By using Lemma \ref{lem:ext isom} again, we have 
\[
\tilde \angle (|h \circ \alpha_\epsilon(t) h \circ \beta_\epsilon(t)|; |p \alpha_\epsilon(t)|, |p \beta_\epsilon(t)|) < C \epsilon + O(t).
\]
Therefore, $\angle_{h(p)} ((h \circ \alpha_\epsilon)^+(0), (h \circ \beta_\epsilon)^+(0)) \le C \epsilon$.
This implies that the map $h_p' : \Sigma_p E \to \Sigma_{h(p)} F$ is well-defined.

Let us give another representation of the map $h_p'$ as follows.
Since $h : (E, |\cdot,\cdot|_E) \to (F, |\cdot,\cdot|_F)$ is the isometric embedding, 
by Theorem \ref{thm:stability}, the limit map $\hat h : (T_p E, |\cdot,\cdot|_{T_p E}) \to (T_{h(p)} F, |\cdot,\cdot|_{T_{h(p)}F})$ can be defined, which is an isometric embedding.
From the above argument, the map $\hat h$ is deteremined independently on the choice of sequences in the approximation $(E, r|\cdot,\cdot|,p) \to (T_p E, |\cdot,\cdot|_{T_p E})$ as $r \to \infty$. 
By the definition, the map $h_p'$ is nothing but the map $\hat h$ restricted to $\Sigma_p E$. 
This completes the proof.
\end{proof}

\section{Proof of Theorem \ref{thm:self gluing}} \label{sec:proof}

Let us first give a proof of Corollary \ref{cor:partial gluing}. 
\begin{proof}[Proof of Corollary \ref{cor:partial gluing}]
Let $M_1$, $M_2$, $E_1$, $E_2$ and $f$ be as in Corollary \ref{cor:partial gluing}. 
Then, the disjoint union $E := E_1 \sqcup E_2$ is $\kappa$-extremal in the Alexandrov space $M := M_1 \sqcup M_2$. 
The map $f : E_1 \to E_2$ implies an isometric involution $\bar f : E \to E$ with respect to the length metric. 
Since $M_1 \cup_f M_2$ is nothing but $M_{\bar f}$, by Theorem \ref{thm:self gluing}, we obtain the conclusion. 
\end{proof}

Let us prove Theorem \ref{thm:self gluing}. 
\begin{proof}[Proof of Theorem \ref{thm:self gluing}]
Let $M$ be an $n$-dimensional Alexandrov space of curvature $\ge \kappa$ and $E$ its $\kappa$-extremal subset of dimension $n-1$. 
Let $f : E \to E$ be a non-trivial isometric involution with respect to the length metric.
By Remark \ref{rem:dim 1 case}, we may assume that $n \ge 2$. 
By Lemma \ref{lem:length}, any point in $M_f \setminus E_f$ has a neighborhood of curvature $\ge \kappa$. 
Therefore, due to the globalization theorem (\cite{BGP}), it suffices to prove that any point in $E_f$ has a neighborhood of curvature $\ge \kappa$. 

Let us take $x \in E$. 
By Lemma \ref{lem:isom emb}, we obtain an isometry 
\begin{equation} \label{eq:derivation}
f_x' : \Sigma_x E \to \Sigma_{f(x)} E
\end{equation}
with respect to the length metric. 
Further, if $x$ is a fixed point of $f$, then the map $f_x'$ is an involution. 

Since $\Sigma_x E$ and $\Sigma_{f(x)}E$ are 1-extremal in $\Sigma_x M$ and $\Sigma_{f(x)} M$ of codimension one, the inductive hypothesis implies that the metric space 
\[
\Sigma_x^\sharp := \left\{ 
\begin{aligned}
&(\Sigma_x)_{f_x'} &&\text{if } x \text{ is a fixed point of } f, \\
&\Sigma_x \cup_{f_x'} \Sigma_{f(x)} &&\text{otherwise}
\end{aligned}
\right.
\]
is an Alexandrov space of curvature $\ge 1$. 
Let $\Sigma_x E_f \subset \Sigma_x^\sharp$ be the subset corresponding to $\Sigma_x E \subset \Sigma_x$ and $\Sigma_{f(x)} E \subset \Sigma_{f(x)}$. 

The $\kappa$-cone extension $D f_x$ of $f_x'$ gives an isometry 
\[
D f_x : T_x^\kappa E \to T_{f(x)}^\kappa E 
\]
between the $\kappa$-cones. 
Note that the $\kappa$-cone over $\Sigma_x^\sharp$ is isometric to the glued space 
\[
T_x^\sharp := 
\left\{ 
\begin{aligned}
&(T_x^\kappa M)_{D f_x} && \text{if } x = f(x), \\
&T_x^\kappa M \cup_{D f_x} T_{f(x)}^\kappa M &&\text{if } x \neq f(x). 
\end{aligned}
\right.
\]
Hence, $T_x^\sharp$ is an Alexandrov space of curvature $\ge \kappa$.

\begin{lemma}[{\cite[Lemma 2.5]{Pet:appl}}] \label{lem:tangent cone}
For any sequence $r_i \to \infty$, there is a subsequence $r_{i_j}$ of it and a pointed proper geodesic space $(K,o)$ such that 
$(r_{i_j} M_f, x)$ converges to $(K,o)$ as $j \to \infty$, in the Gromov-Hausdorff sense. 

Further, for any such a limit $K$, there is a distance non-contracting map $\ell : K \to T_{o_x} T_x^\sharp$ such that $|\ell(v)| = |o v|$ for any $v \in K$, where 
$T_{o_x} T_x^\sharp$ is the tangent cone of the cone $T_x^\sharp$ at the origin $o_x$. 
\end{lemma}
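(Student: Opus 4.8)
The plan is to treat the two assertions separately: first the subsequential existence of the tangent cone $(K,o)$, which is a soft compactness statement, and then the construction of the map $\ell$, which carries the real content.

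For the first part, I would establish precompactness of the family $(r_iM_f,x)$ by a packing estimate. Since the projection $\pi\colon M\to M_f$ is $1$-Lipschitz by the inequality $|\cdot,\cdot|\le|\cdot,\cdot|_X$ recorded in the proof of Lemma \ref{lem:length}, and since \eqref{eq:open map} gives an inclusion of the form $B_{M_f}(x,R)\subset \pi\big(B(x,CR)\cup B(f(x),CR)\big)$, any $\epsilon$-net of the two metric balls $B(x,CR)$ and $B(f(x),CR)$ in $M$ projects onto an $\epsilon$-net of the ball $B_{M_f}(x,R)$. As $M$ has curvature $\ge\kappa$, the covering numbers of small balls in $M$ are bounded in terms of $R$, $\epsilon$ and $\dim M$ only; hence the balls $B_{r_iM_f}(x,R)$ have uniformly bounded covering numbers for each fixed $R$. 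Gromov's precompactness theorem then yields a subsequence $r_{i_j}$ and a pointed Gromov-Hausdorff limit $(K,o)$, which is proper, and which is geodesic since each $M_f$ is geodesic by Lemma \ref{lem:length}(3).

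For the map $\ell$, I would first record that, since $T_x^\sharp$ is the $\kappa$-cone over $\Sigma_x^\sharp$, its tangent cone $T_{o_x}T_x^\sharp$ at the vertex is the Euclidean cone over $\Sigma_x^\sharp$; equivalently it is the glued space $T_xM\cup_{Df_x}T_{f(x)}M$ of the Euclidean tangent cones along $T_xE$ (resp. the self-gluing when $x=f(x)$). The key device is a glued gradient-exponential map $\gexp^\sharp\colon T_x^\sharp\to M_f$, defined by $\gexp_x^\kappa$ on the $T_x^\kappa M$-sheet and by $\gexp_{f(x)}^\kappa$ on the $T_{f(x)}^\kappa M$-sheet. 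These agree on the gluing locus $T_x^\kappa E$: extremality of $E$ forces $\gexp_x^\kappa$ to map $T_x^\kappa E$ into $E$ (gradient curves starting in $E$ stay in $E$), and the $f$-equivariance $f\circ\gexp_x^\kappa=\gexp_{f(x)}^\kappa\circ Df_x$ on $T_x^\kappa E$ makes the two definitions compatible after passing to $M_f$. I would then verify that $\gexp^\sharp$ is $1$-Lipschitz for the glued metric \eqref{eq:length}: a crossing path in $T_x^\sharp$ is sent, sheet by sheet, to a crossing path in $M_f$ of no greater total length because each $\gexp_x^\kappa$ is $1$-Lipschitz, and taking the infimum over such paths gives $|\gexp^\sharp(P)\,\gexp^\sharp(Q)|_{M_f}\le|PQ|_{T_x^\sharp}$. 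Blowing up at $o_x\mapsto x$ along the chosen subsequence, and using that $r_{i_j}T_x^\sharp\to T_{o_x}T_x^\sharp$, the maps $\gexp^\sharp$ converge (Arzel\`a--Ascoli for equi-Lipschitz maps of Gromov-Hausdorff convergent spaces) to a $1$-Lipschitz map $g\colon T_{o_x}T_x^\sharp\to K$ with $g(o_x)=o$.

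It then remains to produce $\ell$ as a norm-controlled section of $g$. Given $v\in K$ represented by $y_j\in M_f$ with $r_{i_j}|xy_j|_{M_f}\to|ov|$, I would take an $m$-shortest path from $x$ to $y_j$ as in Lemma \ref{lem:predist}(4), realizing $|xy_j|_{M_f}$ as a sum of geodesic segments in $M$ with breakpoints on $E_f$, develop this broken geodesic sheet by sheet into $T_x^\sharp$ along the radial directions (so that $\gexp^\sharp$ carries the developed path back to the original one), and set $\ell(v)$ to be the limit of the rescaled developed endpoints, after passing to a further subsequence. By construction the developed path has the same length as the original, so $|\ell(v)|_{T_{o_x}T_x^\sharp}\le|ov|_K$, and $\gexp^\sharp$ sends its endpoint back to $y_j$, so $g\circ\ell=\mathrm{id}_K$. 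The two desired properties now follow formally: since $g$ is $1$-Lipschitz and $g(o)=o$ we get $|ov|_K=|g(\ell(v))\,g(o)|_K\le|\ell(v)\,o_x|=|\ell(v)|$, which together with the above bound gives $|\ell(v)|=|ov|$; and for $v,w\in K$,
\[
|vw|_K=|g(\ell(v))\,g(\ell(w))|_K\le|\ell(v)\,\ell(w)|_{T_{o_x}T_x^\sharp},
\]
so $\ell$ is distance non-contracting.

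I expect the main obstacle to be the construction and the $1$-Lipschitz property of the glued gradient-exponential $\gexp^\sharp$, that is, checking that $\gexp_x^\kappa$ and $\gexp_{f(x)}^\kappa$ truly match along $T_x^\kappa E$ and that the sheet-by-sheet comparison of crossing paths is valid; this is precisely where codimension one, the extremality of $E$, and the isometric-involution property of $f$ enter. A secondary technical point is controlling the developing step, namely ensuring the breakpoints of the $m$-shortest paths remain on $E_f$ after blow-up and that the number of crossings stays bounded near $x$, so that $\ell$ is genuinely a well-defined section of $g$.
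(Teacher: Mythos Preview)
Your precompactness argument and your construction of $\gexp^\sharp$ (together with its $1$-Lipschitz property and the passage to the limit map $g\colon T_{o_x}T_x^\sharp\to K$) are fine and essentially match the paper. The gap is in the construction of $\ell$.

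The ``developing'' step does not make sense as written. The map $\gexp^\sharp$ is built out of the gradient-exponentials $\gexp_x^\kappa$, $\gexp_{f(x)}^\kappa$, which are tied to \emph{radial} curves issuing from $o_x$: one has $\gexp_x^\kappa\circ\log_x^\kappa=\mathrm{id}$ on $B(x,D_\kappa/2)$, but there is no mechanism for lifting a geodesic segment $p_ip_{i+1}$ in $M$ that does \emph{not} start at $x$ to a segment in $T_x^\sharp$ based at a prescribed point $P_i$ in such a way that $\gexp^\sharp$ carries it back. In particular, once you have lifted the first leg $xp_1$ to $o_xP_1$, there is no ``sheet-by-sheet development along radial directions'' that continues from $P_1$ and is sent by $\gexp^\sharp$ back to $p_1p_2$; the gradient-exponential is neither locally injective nor compatible with non-radial geodesics in this way. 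Your own caveat about bounding the number of crossings is a symptom of this: no such bound is available in general, and none is needed for the lemma.

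The paper avoids this entirely by proving one additional property of $\gexp^\sharp$, the radial-norm identity
\[
\left.\frac{d}{dt}\right|_{t=0+}\bigl|\gexp^\sharp(tv)\,x\bigr|_{M_f}=|v|\qquad(v\in T_x^\sharp),
\]
which in the blow-up yields $|g(w)\,o|_K=|w|$ for every $w\in T_{o_x}T_x^\sharp$. Combined with the surjectivity of $\gexp^\sharp$ onto $B(x,D_\kappa/2)$ (hence surjectivity of $g$), one simply takes $\ell$ to be \emph{any} set-theoretic left-inverse of $g$: then $|\ell(v)|=|g(\ell(v))\,o|_K=|ov|_K$ directly, and distance non-contraction follows from $g$ being $1$-Lipschitz exactly as you wrote. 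So the missing ingredient is not a lifting of broken paths but the single radial estimate above; once you have it, the explicit ``developing'' construction is unnecessary.
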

\begin{proof}
Recall that the $\kappa$-gradient-exponential maps 
\[
\gexp_x^\kappa : T_x^\kappa M \to M \text{ and } \gexp_{f(x)}^\kappa : T_{f(x)}^\kappa M \to M
\]
are defined, respectively.
Further, for any $v \in T_x^\kappa E$, we have 
\[
f(\gexp_x^\kappa (v)) = \gexp_{f(x)}^\kappa(D f_x (v)).
\]
Therefore, the map $\gexp^\sharp : T_x^\sharp \to M_f$ can be defined as 
\[
\gexp^\sharp(v) := \left\{
\begin{aligned}
&\gexp_x^\kappa(v) &&\text{ if } v \in T_x^\kappa M \\
&\gexp_{f(x)}^\kappa(v) &&\text{ if } v \in T_{f(x)}^\kappa M. 
\end{aligned}
\right.
\]
Since $\gexp_x^\kappa$ and $\gexp_{f(x)}^\kappa$ are $1$-Lipschitz, so is 
$\gexp^\sharp$ due to Lemma \ref{lem:predist}. 
From the construction, we have 
\begin{equation} \label{eq:norm preserving}
\left. \frac{d}{d t} \right|_{t = 0+} |\gexp^\sharp(t v) x| = |v|.
\end{equation}
For a rigorous proof of \eqref{eq:norm preserving}, we refer the proof of \cite[Lemma 2.5]{Pet:appl}. 
Further, the image of $\gexp^\sharp$ contains $B(x,D_\kappa /2)$ which is a subset of $M_f$. 
Therefore, the first statement holds. 
Let us consider any limit $K$ of scaling spaces $(r_i T_x^\sharp, o_x)$ and $(r_i M_f, x)$ as $r_i \to \infty$, and the limit $D(\gexp^\sharp)$ of $\gexp^\sharp$ under this convergence. 
Since $T_x^\sharp$ is an Alexandrov space, its scaling limit is no other than the unique tangent cone $T_{o_x} T_x^\sharp$. 
So, we have a $1$-Lipschitz surjective map 
\[
D (\gexp^\sharp) : T_{o_x} T_x^\sharp \to K. 
\]
Here, $K$ 
denotes the limit of $(r_i M_f, x)$. 
By \eqref{eq:norm preserving}, it satisfies
\[
|D(\gexp^\sharp)(v)| = |v|
\]
for all $v \in T_{o_x} T_x^\sharp$.
Since $D(\gexp^\sharp)$ is surjective, its left-inverse 
\[
\ell : K \to T_{o_x} T_x^\sharp
\]
gives the desired map. 
This completes the proof. 
\end{proof}

Let us take $p, q \in M_f$ with $|pq| < \infty$ and $x_0 x_1 \dots x_k x_{k+1}$ an $m$-shortest path between $x_0 = p$ and $x_{k+1} = q$, where $k \le m$. 
Let us fix $x = x_i \in \{x_1, \dots, x_k\} \subset E_f$ an inner vertex of the $m$-shortest path.

\begin{lemma}[{\cite[Lemma 2.6]{Pet:appl}}] \label{lem:antipodal}
The directions $\xi :=\, \uparrow_{x_i}^{x_{i-1}}$ and $\eta :=\, \uparrow_{x_i}^{x_{i+1}}$ are antipodal in $\Sigma_x^\sharp$.
Namely, we have
\[
|\xi \zeta| + |\eta \zeta| = \pi
\]
for all $\zeta \in \Sigma_x^\sharp$.
Here, $|\cdot,\cdot|$ is the canonical length metric on the glued space $\Sigma_x^\sharp$.
\end{lemma}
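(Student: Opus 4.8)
The goal is to show that the two directions $\xi = \uparrow_{x_i}^{x_{i-1}}$ and $\eta = \uparrow_{x_i}^{x_{i+1}}$ at an inner vertex $x = x_i$ of an $m$-shortest path are antipodal in the Alexandrov space $\Sigma_x^\sharp$ of curvature $\ge 1$. The plan is to exploit the defining minimality property of the $m$-shortest path: if we could strictly decrease the total length of the broken path locally near $x_i$, we would contradict minimality. The key mechanism is the gradient-exponential map $\gexp^\sharp : T_x^\sharp \to M_f$ constructed in Lemma \ref{lem:tangent cone}, which is $1$-Lipschitz and satisfies the norm-preserving relation \eqref{eq:norm preserving}, together with the fact that $\Sigma_x^\sharp$ is a genuine Alexandrov space of curvature $\ge 1$ by the inductive hypothesis.

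First I would reduce the antipodality statement to a first-variation inequality. Suppose, toward a contradiction, that $\xi$ and $\eta$ are not antipodal in $\Sigma_x^\sharp$; since $\Sigma_x^\sharp$ has curvature $\ge 1$ and diameter at most $\pi$, failure of antipodality means there exists $\zeta \in \Sigma_x^\sharp$ with $|\xi\zeta| + |\eta\zeta| < \pi$. I would then move the vertex $x_i$ a small distance $t$ in the direction $\zeta$ inside $M_f$, using the point $\gexp^\sharp(t\zeta)$ in place of $x_i$. The norm-preserving property \eqref{eq:norm preserving} controls the infinitesimal behavior of $|x_i\,\gexp^\sharp(t\zeta)|$, and the first-variation formula in an Alexandrov space gives
\[
\frac{d}{dt}\bigg|_{t=0+} |x_{i-1}\,\gexp^\sharp(t\zeta)| \le -\cos|\xi\zeta|,
\qquad
\frac{d}{dt}\bigg|_{t=0+} |x_{i+1}\,\gexp^\sharp(t\zeta)| \le -\cos|\eta\zeta|,
\]
where the comparison angles here are computed in $\Sigma_x^\sharp$. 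Adding these, the total length $|x_{i-1}\,\gexp^\sharp(t\zeta)| + |\gexp^\sharp(t\zeta)\,x_{i+1}|$ has derivative at most $-(\cos|\xi\zeta| + \cos|\eta\zeta|)$, which is strictly negative precisely when $|\xi\zeta| + |\eta\zeta| < \pi$ (since $\cos|\xi\zeta| + \cos|\eta\zeta| > 0$ in that range). This would produce a nearby competitor path of strictly smaller length through $E_f$, contradicting that $x_0 \dots x_{k+1}$ is $m$-shortest.

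The subtle point, and the main obstacle, is justifying the first-variation inequalities with the distances of $M_f$ rather than those of $M$ or $T_x^\sharp$. The directions $\xi$ and $\eta$ live in $\Sigma_x^\sharp$, which is the glued sphere of directions, and the moved point $\gexp^\sharp(t\zeta)$ lies in $M_f$; I must verify that the glued metric on $M_f$ near $E_f$ is faithfully captured by $\Sigma_x^\sharp$ at first order, and that the $m$-shortest constraint is not violated by the perturbation — i.e.\ that $\gexp^\sharp(t\zeta)$ remains an admissible intermediate vertex in $E_f$ when $\zeta$ is tangent to $E_f$, and otherwise that one of the two distances $|x_{i-1}\,\cdot|$ or $|\cdot\,x_{i+1}|$ can still be realized through points of $E_f$ within the allotted count. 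I would handle this by working with the lifted configuration in $T_x^\sharp$ via Lemma \ref{lem:tangent cone}: the map $\ell : K \to T_{o_x}T_x^\sharp$ is distance non-contracting and norm-preserving, so angles measured in the tangent cone $T_{o_x}T_x^\sharp \cong C^0\Sigma_x^\sharp$ bound angles in the blow-up $K$ of $M_f$, and the comparison inequalities needed for the first-variation estimate hold in $\Sigma_x^\sharp$ by the inductive hypothesis. This reduces the whole argument to the cone geometry of $\Sigma_x^\sharp$, exactly as in the proof of \cite[Lemma 2.6]{Pet:appl}, which I would follow in outline.
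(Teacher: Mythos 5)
The central step of your plan---the first-variation inequalities
\[
\frac{d}{dt}\bigg|_{t=0+}|x_{i-1}\,\gexp^\sharp(t\zeta)| \le -\cos|\xi\zeta|,
\qquad
\frac{d}{dt}\bigg|_{t=0+}|x_{i+1}\,\gexp^\sharp(t\zeta)| \le -\cos|\eta\zeta|,
\]
with $|\xi\zeta|$, $|\eta\zeta|$ measured in the \emph{glued} metric of $\Sigma_x^\sharp$---is a genuine gap, and it is essentially equivalent to the hard half of the lemma. There are two problems. First, admissibility: if $\zeta \notin \Sigma_x E_f$, then $\gexp^\sharp(t\zeta)$ is not a point of $E_f$, so the perturbed chain is not a competitor in the definition of the $m$-predistance and no contradiction with $m$-shortestness can be extracted; and exploiting a shortcut from $\xi$ to $\eta$ that crosses $\Sigma_x E_f$ several times would force you to insert several new vertices, exceeding the budget $m$ (an $m$-shortest path need not be $m'$-shortest for $m' > m$). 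Second, even for admissible $\zeta \in \Sigma_x E_f$, what first variation actually provides (via Theorem \ref{thm:Liberman} and Proposition \ref{prop:convex curve}(2), applied in $M$, since $M_f$ is not yet known to be an Alexandrov space) is
\[
\frac{d}{dt}\bigg|_{t=0+}|x_{i-1}\,\gexp^\sharp(t\zeta)|_X \le -\cos|\xi\zeta|_0,
\]
where $|\xi\zeta|_0 \ge |\xi\zeta|$ is the $0$-predistance; since $-\cos|\xi\zeta| \le -\cos|\xi\zeta|_0$, the inequality you need is strictly stronger than the one available, and the discrepancy is exactly the possibility that zigzag paths through $\Sigma_x E_f$ shorten distances in $\Sigma_x^\sharp$---which is precisely what the lemma must rule out, so assuming it is circular. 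Your proposed fix via Lemma \ref{lem:tangent cone} does not close this: the map $\ell$ compares only the blow-up $K$ with the cone, whereas the distance to the fixed points $x_{i\pm 1}$ is not a blow-up quantity; in fact the paper's proof of Lemma \ref{lem:antipodal} makes no use of Lemma \ref{lem:tangent cone}. In sum, your perturbation argument yields only $|\xi\nu|_0 + |\eta\nu|_0 \ge \pi$ for $\nu \in \Sigma_x E_f$, which is the easy first step of the paper's proof.

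The paper's actual proof proceeds in four steps, three of which are absent from your plan: (a) first variation at the inner vertex, exactly as above, gives $g(\nu) := |\xi\nu|_0 + |\eta\nu|_0 \ge \pi$ on $\Sigma_x E_f$; (b) at a minimizer $\bar\nu$ of $g$, the \emph{inductive hypothesis}---the antipodality statement \eqref{eq:antipodal} itself, one dimension lower, applied in $\Sigma_{\bar\nu}\Sigma_x^\sharp$---combined with Theorem \ref{thm:Liberman} (geodesics of $\Sigma_x E_f$ are $1$-quasigeodesics in both $\Sigma_x M$ and $\Sigma_{f(x)} M$) and hinge comparison forces $g(\bar\nu) = \pi$; (c) the equality is propagated to $g \equiv \pi$ on all of $\Sigma_x E_f$ by spherical concavity of $h(\nu) = -\cos|\xi\nu|_0 - \cos|\eta\nu|_0$ along such quasigeodesics; (d) finally, if $|\xi\eta|_m < \pi$ for some $m$, then the vertex $\theta \in \Sigma_x E_f$ of an $m$-shortest path closest to $\xi$ would, by the equality $g(\theta) = \pi$, carry two distinct directions opposite to $\uparrow_\theta^\xi$, contradicting non-branching in the Alexandrov space $\Sigma_x^\sharp$ (Alexandrov by induction); antipodality for every $\zeta$ then follows from $|\xi\eta| = \pi$ and the perimeter bound in curvature $\ge 1$. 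Steps (b)--(d)---dimensional induction on the antipodality statement itself, Liberman's theorem, the concavity argument, and non-branching---are what actually exclude glued-metric shortcuts; to repair your proposal you would have to supply precisely these ingredients rather than the direct perturbation.
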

\begin{proof} 
Let $\xi \in \Sigma_x M$ and $\eta \in \Sigma_{f(x)} M$.
Let us first prove that 
\begin{equation} \label{eq:antipodal}
|\xi \nu|_0 + |\eta \nu|_0 = \pi
\end{equation}
for all $\nu \in \Sigma_x E_f$.
Let us set $g(\nu) := |\xi \nu|_0 + |\eta \nu|_0$. 
Since $x_{i-1} x x_{i+1}$ is a $1$-shortest path, 
applying the usual first variation formula, 
we obtain 
\[
- \cos |\xi \nu|_0 - \cos |\eta \nu|_0 \ge 0
\]
for all $\nu \in \Sigma_x E_f$.
Hence, we have 
\[
g(\nu) = |\xi \nu|_0 + |\eta \nu|_0 \ge \pi.
\]
Let us take $\bar \nu \in \Sigma_x E_f$ a minimizer of the function $g$. 
Then, $\xi \bar \nu \eta$ is a $1$-shortest path in $\Sigma_x^\sharp = \Sigma_x M \cup_{f_x'} \Sigma_{f(x)} M$.
Let us take a minimal geodesic $c$ in $\Sigma_x E_f$ with respect to the length metric starting from $c(0) = \bar \nu$ in the direction $c^+(0) \in \Sigma_{\bar \nu} \Sigma_x E_f$.
By the inductive hypothesis about the property \eqref{eq:antipodal}, we have 
\[
|\! \uparrow_{\bar \nu}^\xi c^+(0)|_0 + |\! \uparrow_{\bar \nu}^\eta c^+(0)|_0 = \pi.
\]
By Theorem \ref{thm:Liberman}, the curve $c$ is a $1$-quasigeodesic both in $\Sigma_x M$ and $\Sigma_{f(x)} M$.
If $g(\bar \nu) > \pi$, then by the hinge comparison (Proposition \ref{prop:convex curve}(2)) for the hinges $(c, \bar \nu \xi)$ and $(c, \bar \nu\eta)$, we have 
\[
g(c(t)) < g(c(0)) = g(\bar \nu)
\]
for small $t > 0$. 
This is contradict to the choice of $\bar \nu$.
Hence, we obtain $g(\bar \nu) = \pi$.

Let us define a function $h : \Sigma_x E_f \to \mathbb R$ by 
\[
h(\nu) := - \cos |\xi \nu|_0 - \cos |\eta \nu|_0. 
\]
Let $c$ denote a minimal geodesic in $\Sigma_x E_f$ with respect to the length metric starting at $c(0) = \bar \nu$. 
Since $\bar \nu$ minimize $h$, we have $h \circ c \ge h(\bar \nu) = 0$.
By Theorem \ref{thm:Liberman}, $h \circ c$ is spherically concave. 
Further, we have 
\begin{align*}
(h \circ c)'(0) = \cos |\xi \bar \nu|_0 \cos |\!\uparrow_{\bar \nu}^\xi c^+(0)|_0 + 
\cos |\eta \bar \nu|_0 \cos |\!\uparrow_{\bar \nu}^\eta c^+(0)|_0 = 0.
\end{align*}
By the spherically concavity of $h \circ c$, we obtain 
\[
h \circ c(t) \le h(\bar \nu) \cos t + (h \circ c)'(0) \sin t = 0.
\]
Therefore, $h = 0$ on $\Sigma_x E_f$.
Namely, \eqref{eq:antipodal} is proved.

We prove $|\xi \eta| = \pi$ in $\Sigma_x^\sharp$.
Suppose $|\xi \eta| <\pi$. 
Then, there is an $m$ such that $|\xi \eta|_m < \pi$.
Let $\theta$ be the closest vertex to $\xi$ of an $m$-shortest path between $\xi$ and $\eta$.
Since $\theta \in \Sigma_x E_f$, $\xi \theta \eta$ is a $1$-shortest path of length $\pi$ in $\Sigma_x^\sharp$ by \eqref{eq:antipodal}. 
Hence, there are two distinct directions at $\theta$ which are opposite to $\uparrow_\theta^\xi$ in $\Sigma_\theta \Sigma_x^\sharp$.
It is contradict to that $\Sigma_x^\sharp$ is an Alexandrov space. 
This completes the proof of Lemma \ref{lem:antipodal}.
\end{proof} 

\begin{corollary}[{\cite[Corollary 2.7]{Pet:appl}}] \label{cor:antipodal}
Let $\xi$ and $\eta$ be an in Lemma \ref{lem:antipodal}. 
For $\zeta \in \Sigma_x^\sharp$, there is a unique $\zeta^\ast \in \Sigma_x E_f$ such that 
\begin{align*}
|\xi \zeta|_0 + |\zeta \zeta^\ast|_0 + |\zeta^\ast \eta|_0 = \pi 
\end{align*}
or
\begin{align*}
|\eta \zeta|_0 + |\zeta \zeta^\ast|_0 + |\zeta^\ast \xi|_0 = \pi.
\end{align*}
\end{corollary}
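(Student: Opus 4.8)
The plan is to realize $\zeta^\ast$ as the point where a ``meridian'' from $\xi$ to $\eta$ through $\zeta$ meets the seam $\Sigma_x E_f$, and then to deduce its uniqueness from the non-branching of minimal geodesics together with the positive-curvature rigidity of $\Sigma_x^\sharp$.

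First I would treat existence. Since $\xi$ and $\eta$ are antipodal by Lemma \ref{lem:antipodal}, for the given $\zeta$ we have $|\xi \zeta| + |\zeta \eta| = \pi = |\xi \eta|$, so the concatenation of a minimal geodesic from $\xi$ to $\zeta$ with one from $\zeta$ to $\eta$ is a minimal geodesic $\gamma$ of length $\pi$ in $\Sigma_x^\sharp$ through $\zeta$. Because $\xi$ lies in $\Sigma_x M$ and $\eta$ in $\Sigma_{f(x)} M$, and since removing $\Sigma_x E_f$ disconnects the two pieces of $\Sigma_x^\sharp$, the curve $\gamma$ must meet the seam; I take $\zeta^\ast$ to be such a meeting point. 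As each sub-arc of $\gamma$ (from $\xi$ to $\zeta$, from $\zeta$ to $\zeta^\ast$, and from $\zeta^\ast$ to $\eta$) is contained in a single piece, it realizes the corresponding $0$-predistance, and reading off lengths gives the first displayed identity when $\zeta \in \Sigma_x M$ and the second when $\zeta \in \Sigma_{f(x)} M$. If $\zeta \in \Sigma_x E_f$ itself, I set $\zeta^\ast = \zeta$ and invoke the antipodality \eqref{eq:antipodal} on the seam.

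Next I would check that $\gamma$ meets $\Sigma_x E_f$ in a single point, so that the above reading is unambiguous. Suppose $\gamma$ crossed the seam at two parameters, bounding a sub-arc contained in one piece between two seam points. Reflecting this sub-arc by $f_x'$ produces a second minimal geodesic of the same length joining $\xi$ to $\eta$; since the meridian through a fixed interior point $\zeta$ is unique (the distance $\pi$ between $\xi$ and $\eta$ forces a spherical suspension structure with $\xi,\eta$ as poles), this reflection must fix $\gamma$, forcing the excursion to lie in the seam and the crossing to collapse to a single point.

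Finally, for the uniqueness of $\zeta^\ast$ I would argue as in the last step of the proof of Lemma \ref{lem:antipodal}. If $\zeta_1^\ast \neq \zeta_2^\ast$ both satisfied the first identity for the same $\xi$-side point $\zeta$, then $\xi \zeta \zeta_1^\ast \eta$ and $\xi \zeta \zeta_2^\ast \eta$ would be minimal geodesics from $\xi$ to $\eta$ through $\zeta$; since $\xi$ is a pole the geodesic from $\xi$ to $\zeta$ is unique, so both share the backward direction $\uparrow_\zeta^\xi$ at $\zeta$, and, passing straight through the interior point $\zeta$, each has forward direction opposite to $\uparrow_\zeta^\xi$ in $\Sigma_\zeta \Sigma_x^\sharp$. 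As a direction at distance $\pi$ from a given one is unique in the curvature $\ge 1$ space $\Sigma_\zeta \Sigma_x^\sharp$, the two forward directions coincide, and non-branching of minimal geodesics yields $\zeta_1^\ast = \zeta_2^\ast$. I expect the main obstacle to be exactly this uniqueness part—ensuring that the meridian meets the seam only once and that the antipodal direction is unique—which is precisely where the spherical-suspension rigidity of $\Sigma_x^\sharp$ and of its spaces of directions is essential.
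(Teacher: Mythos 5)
Your overall strategy---realizing $\zeta^\ast$ as the point where a pole-to-pole minimal geodesic through $\zeta$ meets the seam---is essentially the paper's own (the paper takes the seam vertex of a $1$-shortest path from $\zeta$ to $\eta$), and your reduction of the displayed identities to the claim that the three sub-arcs realize the corresponding $0$-predistances is the right reduction. The genuine gap is in how you justify that claim: your reflection argument is invalid. The map $f_x'$ is an isometry only between $\Sigma_x E$ and $\Sigma_{f(x)} E$ equipped with their \emph{induced length metrics}; it is not, and in general cannot be extended to, an isometry between the pieces $\Sigma_x M$ and $\Sigma_{f(x)} M$. So ``reflecting'' a sub-arc lying in one piece across the seam does not produce a curve in the glued space, let alone one of the same length (and in the self-gluing case $x = f(x)$ there is only one piece, so the operation is not even defined). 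This device is special to Perelman's doubling, where the canonical involution of the double is a global isometry. Without it, neither your single-crossing claim nor the assertion that the sub-arcs from $\xi$ to $\zeta$, from $\zeta$ to $\zeta^\ast$, and from $\zeta^\ast$ to $\eta$ lie in single pieces is established, and the ``reading off lengths'' step is unsupported.

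The gap can be repaired without any single-crossing statement. Applying Lemma \ref{lem:antipodal} at a seam point $\nu$ and comparing with \eqref{eq:antipodal} gives $|\xi\nu| + |\eta\nu| = \pi = |\xi\nu|_0 + |\eta\nu|_0$; since the glued distance never exceeds the $0$-predistance, both inequalities $|\xi\nu| \le |\xi\nu|_0$ and $|\eta\nu| \le |\eta\nu|_0$ must be equalities. Now let $\zeta^\ast$ be the \emph{first} seam point of $\gamma$ after $\zeta$: the sub-arc from $\zeta$ to $\zeta^\ast$ is seam-free, hence lies in one piece and realizes $|\zeta\zeta^\ast|_0$; the last leg needs no control on $\gamma$ at all, since $|\zeta^\ast\eta|_0 = \pi - |\xi\zeta^\ast|_0$; and $|\xi\zeta|_0 = |\xi\zeta|$ follows from the same seam-point equalities applied to the last seam point before $\zeta$ (or trivially if there is none). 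Finally, be aware that your uniqueness step also does not close: non-branching shows the two continuations coincide as curves, but a single geodesic may carry several seam points (it can run along the seam), and uniqueness as literally stated in fact fails at $\zeta = \xi$, where \eqref{eq:antipodal} makes every seam point admissible. The paper's own proof silently ignores uniqueness; only existence and the geodesic property are used later, in the proof of Lemma \ref{lem:m-dist comp}.
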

\begin{proof}
We may assume $\xi \in \Sigma_x M$, $\eta \in \Sigma_{f(x)} M$ and $\zeta \in \Sigma_x M$.
Let us consider a $1$-shortest path $\zeta \zeta^\ast \eta$ between $\zeta$ and $\eta$ in $\Sigma_x^\sharp$ through some $\zeta^\ast \in \Sigma_x E_f$. 
Then, such a $\zeta^\ast$ is the desired direction. 
\end{proof}

\begin{lemma}[{\cite[Lemma 2.8]{Pet:appl}}] \label{lem:m-dist comp}
Let $\gamma : [a,b] \to M_f$ be a minimal geodesic in $E_f$ with respect to the length metric.
Then 
\[
\frac{d^2}{d t^2}\rho_\kappa(|p \gamma(t)|_m) + \kappa \rho_\kappa(|p \gamma(t)|_m) \le 1
\]
on a neighborhood of each $t_0$, 
for any $p \in M_f \setminus \{\gamma(t_0)\}$ with $|p \gamma(t_0)|_m < D_\kappa /4$ and $m \ge 0$.
\end{lemma}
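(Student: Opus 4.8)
The plan is to prove the inequality by induction on $m$, using the contradiction device of Lemma~\ref{lem:contradiction1} at the inductive step. For the base case $m=0$, the predistance $|p\gamma(t)|_0$ is the minimum, over the finitely many lifts in $X$ of $p$ and of $\gamma(t)$, of the ambient distance $|\cdot,\cdot|_X$. Since $\gamma$ lifts to a minimal geodesic of $(E,|\cdot,\cdot|_E)$ and $f$ is a length isometry of $E$, both $\gamma$ and $f\circ\gamma$ lift to minimal geodesics of $E$, hence to $\kappa$-quasigeodesics of $M$ by Theorem~\ref{thm:Liberman}. For each fixed lift, the distance from a fixed point to a $\kappa$-quasigeodesic satisfies the asserted inequality by Proposition~\ref{prop:convex curve}; and because $\rho_\kappa$ is monotone, the minimum of finitely many functions each obeying the differential inequality obeys it in the barrier sense (an upper barrier of the active branch at a point serves the minimum). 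This settles $m=0$.

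Assume the inequality for $m-1$ and write $F(t)=|p\gamma(t)|_m$. One first checks that $\rho_\kappa\circ F$ is first-order concave in the sense of \eqref{eq:a1}: near any parameter $F$ is, by properness and Lemma~\ref{lem:predist}(4), a minimum of finitely many distance-type functions, and $\rho_\kappa$ is increasing. If the desired inequality $(\rho_\kappa\circ F)''+\kappa\rho_\kappa\circ F\le 1$ failed on a subinterval, Lemma~\ref{lem:contradiction1} applied with $g=1-\kappa\rho_\kappa\circ F$ would furnish $t_0$, $A\in\mathbb R$ and $\epsilon>0$ with
\[
\rho_\kappa(F(t))\ge \rho_\kappa(F(t_0))+\frac{1-\kappa\rho_\kappa(F(t_0))}{2}(t-t_0)^2+\epsilon(t-t_0)^2+A(t-t_0)
\]
for $|t-t_0|$ small. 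Averaging the values at $t_0\pm\delta$ removes the linear term and shows that $\rho_\kappa\circ F$ is, at $t_0$, strictly more convex than the model solution by a definite amount $2\epsilon\delta^2$; producing competitor $m$-paths whose lengths contradict this is the goal.

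Fix an $m$-shortest path $p=p_0,p_1,\dots,p_{k},p_{k+1}=\gamma(t_0)$ at $t_0$, with $p_i\in E_f$. If $k=0$ the path is direct and the $m=0$ estimate already serves as an upper barrier, so assume $k\ge 1$ and let $z=p_k$ be the last interior vertex. For the competitors to $\gamma(t_0\pm\delta)$ I keep the head $p\to\cdots\to p_{k-1}$ (whose cost is unchanged) and re-optimize only the last two legs $p_{k-1}\to z'\to\gamma(t_0\pm\delta)$ over a crossing $z'\in E_f$ near $z$, which keeps the crossing count $\le m$. The decisive point is Lemma~\ref{lem:antipodal}: at $z$ the incoming and outgoing directions of the $t_0$-path are antipodal in $\Sigma_z^\sharp$, so the broken two-leg develops into $\mathbb M_\kappa$ as a single geodesic; as $\gamma(t)$ slides along the $\kappa$-quasigeodesic $\gamma$, the re-optimized two-leg length develops to the genuine $\mathbb M_\kappa$-distance from the developed image of $p_{k-1}$ to $\gamma(t)$, whose $\rho_\kappa$ obeys the inequality by the quasigeodesic hinge comparison Proposition~\ref{prop:convex curve}(2). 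Adding the unchanged head and summing the $+\delta$ and $-\delta$ estimates yields
\[
\rho_\kappa(F(t_0+\delta))+\rho_\kappa(F(t_0-\delta))\le 2\rho_\kappa(F(t_0))+\bigl(1-\kappa\rho_\kappa(F(t_0))\bigr)\delta^2+o(\delta^2),
\]
which contradicts the strict lower bound from Lemma~\ref{lem:contradiction1}. The hypothesis $|p\gamma(t_0)|_m<D_\kappa/4$ keeps all comparison perimeters below $2D_\kappa$, so every comparison invoked is legitimate.

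I expect the main obstacle to be exactly this re-optimization step: converting the first-order ``straightness'' supplied by the antipodal Lemma~\ref{lem:antipodal} (and Corollary~\ref{cor:antipodal}, which furnishes the relevant antipode direction) into a clean second-order length estimate. Naively freezing the crossing at $z$ fails, because $\rho_\kappa$ of a constant plus a $\kappa$-convex function is not $\kappa$-convex: an additive head $|pz|$ contributes a spurious positive term proportional to $|pz|$ times the second derivative of the tail. Letting the crossing $z'$ slide is precisely what cancels this term, since antipodality forces $z'$ to depend on $t$ only to first order, so the two-leg cost is to second order a bona fide developed geodesic distance. Establishing the existence and first-order dependence of $z'$, and that the development remains within the range where $\rho_\kappa$ and the comparison inequalities apply, is the technical heart of the argument.
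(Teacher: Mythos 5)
Your base case and your use of the contradiction device (Lemma \ref{lem:contradiction1}) agree with the paper's proof, but the inductive step has a genuine gap, and it is precisely the difficulty you flagged at the end. Your competitor bounds $F(t)=|p\gamma(t)|_m$ by $C+D(t)$, where $C$ is the frozen cost of the head $p\to\dots\to p_{k-1}$ and $D(t)=\min_{z'\in E_f}\bigl(|p_{k-1}z'|+|z'\gamma(t)|\bigr)$. Sliding $z'$ may well make $\rho_\kappa\circ D$ satisfy the differential inequality, but that is not what you need: you need $\rho_\kappa\bigl(C+D(\cdot)\bigr)$ to satisfy it, and the spurious term you diagnosed for a frozen crossing reappears verbatim at the junction $p_{k-1}$, now proportional to $C$ times $D''$. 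For $\kappa=0$ the hypothesis on $D$ reads $(D')^2+D\,D''\le 1$, while what you need is $(D')^2+(C+D)D''\le 1$; the defect $C\,D''$ is strictly positive already in the simplest model: $M$ a flat half-plane, $E=\partial M$, $f=\mathrm{id}$ (so $M_f=\mathbb R^2$), $p_{k-1}=(0,a)$, $\gamma(t)=(t,0)$, where $D(t)=\sqrt{t^2+a^2}$ gives $(\rho_0\circ D)''=1$ but $\bigl(\rho_0(C+D)\bigr)''=1+Ca^2/D^3>1$. So $C+D$, although it touches $F$ at $t_0$, is not a usable upper barrier, and your displayed second-difference inequality does not follow. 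A structural symptom of the gap is that your argument never actually invokes the inductive hypothesis you assumed.

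The paper's mechanism for absorbing the head is exactly the missing idea. Instead of freezing the path at $p_{k-1}$, it works at the last crossing $p_k$: Corollary \ref{cor:antipodal} produces the antipodes $(\uparrow_{p_k}^{\gamma(t_j)})^\ast$, $\nu$ is their limit, and Lemma \ref{lem:dir approx} provides an auxiliary minimal geodesic $\sigma$ of $(E_f,|\cdot,\cdot|_{E_f})$ emanating from $p_k$ with $\angle(\sigma^+(0),\nu)<\delta$. The competitor is $\min_\tau\bigl(|p\sigma(\tau)|_{m-1}+|\sigma(\tau)\gamma(t_j)|_0\bigr)$: the first term is controlled by the inductive hypothesis applied to the geodesic $\sigma$ and the point $p$, which develops the \emph{whole} head together with the sliding crossing as a single $\kappa$-hinge of angle $\beta$ at $p_k$; the second term is controlled by Theorem \ref{thm:Liberman} ($\sigma$ is a $\kappa$-quasigeodesic of $M$) with angle $\beta_j$; gluing the two developments along $\sigma$ yields \eqref{eq:later}, and the antipodality of Lemma \ref{lem:antipodal} is what pins $\cos(\beta-\beta_j)$ down to second order in $t_j$. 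In short, the inductive hypothesis applied along a geodesic of $E_f$ through the last crossing is the only tool that lets the head be developed rather than added as a constant; any competitor keeping the head as an additive constant hits the $C\,D''$ term no matter how cleverly the last crossing slides. If you try to repair your scheme by replacing $C+|p_{k-1}z'|$ with $|pz'|_{m-1}$, you must understand how $|pz'|_{m-1}$ varies along a one-parameter family of crossings, i.e.\ you are forced to introduce $\sigma$ and the inductive hypothesis --- which is the paper's proof.
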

Petrunin proved the corresponding lemma for the case $\kappa = 0$ in the proof of Theorem \ref{thm:Pet gluing}. 
In this paper, we prove the lemma when $\kappa \neq 0$. 
The statement of the case $\kappa =0$ also follows from the statements of the case $\kappa < 0$ and a limiting procedure.
\begin{proof}[Proof of Lemma \ref{lem:m-dist comp}]
Let us assume that $\kappa \neq 0$.
Let us set $h(u) := h_{\kappa}(u) := - \kappa^{-1} \csk\, u$. 
To prove Lemma \ref{lem:m-dist comp}, it suffices to prove that 
\begin{equation} \label{eq:m-dist comp}
\frac{d^2}{d t^2} h (|p\gamma(t)|_m) + \kappa h (|p \gamma(t)|_m) \le 0 
\end{equation}
for $m \ge 0$, $\gamma$ as in the assumption and $t$ with $|p \gamma(t)|_m < D_\kappa /4$.

Since the 0-predistance $|p\gamma(t)|_0$ is no other than the original distance from the set $p \subset M$ to the curve $\gamma$, 
by Theorem \ref{thm:Liberman}, the lemma is true for $m=0$.

We suppose that the lemma is true for all $\ell < m$ and false for $m$.
Namely, there exist a curve $\gamma$ as in the assumption in Lemma \ref{lem:m-dist comp} and a point $p$, 
the function $h(|p \gamma(t)|_m)$ does not satisfy \eqref{eq:m-dist comp}.
From the inductive hypothesis, the function $h(|p \gamma(t)|_m)$ satisfies the assumption \eqref{eq:a1} in Lemma \ref{lem:contradiction1}. 
Indeed, this is valid by a later equation \eqref{eq:later}. 
Hence, there exist a parameter $t_0$ at an interior point of $\gamma$, $\epsilon > 0$ and $A \in \mathbb R$ such that for any $t$ with $|t-t_0| < \epsilon$, 
\begin{align} \label{eq:contradiction}
h(|p \gamma(t)|_m) \ge \hspace{0.2em}& h(|p \gamma(t_0)|_m) -\kappa h(|p \gamma(t_0)|_m) (t-t_0)^2/2 \\ 
&+ \epsilon (t-t_0)^2 + A (t-t_0). \nonumber 
\end{align}
We may assume $t_0 = 0$ by translating the parameter of $\gamma$.

Let $q := \gamma(0)$ and let us fix an $m$-shortest path $p_0 p_1 \dots p_k p_{k+1}$ between $p = p_0$ and $q = p_{k+1}$, where $k \le m$. 
We may assume $k \ge 1$.
Let us take a sequence $t_j$ with $\lim_{j \to \infty} t_j = 0$. 
Let $(\uparrow_{p_k}^{\gamma(t_j)})^\ast \in \Sigma_{p_k} E_f \subset \Sigma_{p_k}^\sharp$ be a direction taken by Corollary \ref{cor:antipodal}.
Taking a subsequence of $\{t_j\}$, we may assume that  $\{(\uparrow_{p_k}^{\gamma(t_j)})^\ast\}$ converges to some direction $\nu \in \Sigma_{p_k} E_f$.
For any $\delta > 0$, by Lemma \ref{lem:dir approx}, there is a minimal geodesic $\sigma$ in $N$ from $p_k$ such that 
\[
\angle(\sigma^+(0), \nu) < \delta.
\]
Here, $\angle$ is the distance function on $\Sigma_{p_k}^\sharp$.
We set 
\begin{align*}
&\alpha = \angle (\uparrow_q^{p_k}, \gamma^+(0)), 
&&\beta = \angle(\uparrow_{p_k}^q, \sigma^+(0)), \\
&\beta_j = \angle(\uparrow_{p_k}^{\gamma(t_j)}, \sigma^+(0)), 
&&\theta_j = \angle(\uparrow_{p_k}^{\gamma(t_j)}, \uparrow_{p_k}^q).  
\end{align*}
Here, we may assume that $\uparrow_{p_k}^{\gamma(t_j)} \to \uparrow_{p_k}^q$ as $j \to \infty$. 
Hence, $\theta_j \to 0$ and $\beta_j \to \beta$ as $j \to \infty$.
Note that by Corollary \ref{cor:antipodal}, $\uparrow_{p_k}^{\gamma(t_j)}$ is contained in a geodesic between $\uparrow_{p_k}^q$ and $(\uparrow_{p_k}^{\gamma(t_j)})^\ast$ in $\Sigma_{p_k}^\sharp$.

From the monotonicity of the comparison angle (Proposition \ref{prop:convex curve}(3)) and the law of sine in the $\kappa$-plane, we have 
\[
\frac{\snk\, \theta_j}{\snk\, |t_j|} 
\ge 
\frac{\snk\, \tilde \angle q p_k \gamma(t_j)}{\snk\, |t_j|} = \frac{\snk\, \tilde \angle p_k q \gamma(t_j)}{\snk\, |p_k \gamma(t_j)|_0} 
= \frac{\snk\, \alpha}{\snk\, |p_kq|_0} + O(t_j). 
\]

We can assume that $\uparrow_{p_k}^q \not\in \Sigma_{p_k} E_f$. 
By taking into account the triangle $\triangle \uparrow_{p_k}^q \uparrow_{p_k}^{\gamma(t_j)} \sigma^+(0)$ in $\Sigma_{p_k}^\sharp$, 
we have 
\begin{align*}
\frac{\cos \beta \cos \theta_j - \cos \beta_j}{\sin \beta \sin \theta_j}
&\le - \cos \angle \uparrow_{p_k}^{\gamma(t_j)} \uparrow_{p_k}^q \sigma^+(0) \\
&= - \cos \angle \left(\uparrow_{p_k}^{\gamma(t_j)}\right)^\ast \uparrow_{p_k}^q \sigma^+(0)\\
&\le - \cos \left( O(\delta) + O(t_j) \right)\!.
\end{align*}
Therefore, we have 
\[
\cos (\beta - \beta_j) 
\le 1 - \frac{t_j^2 \sin^2 \alpha}{2\, \snk^2\, |p_k q|_0} + O(\delta) t_j^2 + o(t_j^2).
\]

By the inductive hypothesis, we have 
\[
(h(|p \sigma(\tau)|_{m-1}))'' + \kappa h(|p \sigma(\tau)|_{m-1}) \le 0.
\]
From Lemma \ref{lem:predist}, we have
\[
h(|p \sigma(\tau)|_{m-1}) \le h(|p p_k|_{m-1})\, \csk\, \tau + \snk\, |p p_k|_{m-1}\, \snk\, \tau \cos \beta. 
\]
Since $\sigma$ is a quasigeodesic in $M$, we obtain 
\[
h(|\gamma(t_j) \sigma(\tau)|_0) \le 
h( |\gamma(t_j) p_k|_0)\, \csk\, \tau - \snk\, |\gamma(t_j) p_k|_0\, \snk\, \tau \cos \beta_j. 
\]

Let us see the figure in \cite[p.217]{Pet:appl} and regard it as a figure in the $\kappa$-plane. 
Note that when $j$ is large, the length of $\sigma$ is sufficiently greater than $|\gamma(t_j) p_k|_0$. 
Therefore, we obtain the second inequality in the following.
The first inequality comes from the monotonicity of $h$.  
\begin{align} 
h(|p \gamma(t_j)|_m) 
&\le h\left(\min_{\tau} \left(|p \sigma(\tau)|_{m-1} + |\sigma(\tau) \gamma(t_j)|_0 \right) \right) \nonumber \\
&\le \csk\, |p p_k|_{m-1}\, h(|p_k \gamma(t_j)|_0) \label{eq:later} \\
&\hspace{20pt} + \snk\, |p p_k|_{m-1}\, \snk\, |p_k \gamma(t_j)|_0 \cos (\beta - \beta_j). \nonumber
\end{align}
Note that, as mentioned above, this implies that the function $h(|p \gamma(t)|_m)$ satisfies \eqref{eq:a1} in Lemma \ref{lem:contradiction1}. 

By Theorem \ref{thm:Liberman}, $\gamma$ is a quasigeodesic in $M$. 
Hence, we have 
\[
h(|p_k \gamma(t_j)|_0) \le h(|p_k q|_0)\, \csk\, t_j + \snk\, |p_k q|_0\, \snk\, t_j \cos \alpha.
\]
By the first variation formula, we obtain 
\[
\snk\, |p_k \gamma(t_j)|_0 \le \snk\, |p_k q|_0 - t_j \,\csk\, |p_k q|_0\, \cos \alpha + \frac{\lambda}{2} t_j^2 + o(t_j^2)
\]
where $\lambda$ is given as 
\[
\lambda = \frac{\csk^2\, |p_k q|_0 - \cos^2 \alpha}{\snk\, |p_k q|_0}
\]
due to Lemma \ref{lem:sine comp}. 

By summarizing the above inequalities and using the expansions $\csk\, T = 1 - (\kappa/2) T^2 + o(T^2)$ and $\snk\, T = T + o(T^2)$, we obtain 
\[
\epsilon t_j^2 + A' t_j \le O(\delta) t_j^2 + o(t_j^2)
\]
for some constant $A' \in \mathbb R$. 
Tending $t_j \to \pm 0$, we have $A' = 0$ and $\epsilon \le |O(\delta)|$. 
The last inequality fails when $\delta$ is small. 
This completes the proof of Lemma \ref{lem:m-dist comp}.
\end{proof} 
Let us continue the proof of Theorem \ref{thm:self gluing}. 
Let $\gamma = p_0 p_1 \dots p_{k+1}$ be an $m$-shortest path in $M_f$. 
From Lemma \ref{lem:m-dist comp}, we know that $\gamma$ is $\kappa$-convex with respect to the $\ell$-predistance at any point except interior points $p_i = \gamma(t_i)$, $1 \le i \le k$. 
Passing $\ell \to \infty$, we have that $\gamma$ is $\kappa$-convex with respect to the length metric $|\cdot, \cdot|$ on $M_f$ at any $t$ except $t_i$. 
Let us fix $i$ and consider $p_i = \gamma(t_i)$ an interior point. 
Let us take $x \neq p_i$ and a geodesic $\sigma$ starting at $p_i$ ending at $x$. 
By Lemmas \ref{lem:tangent cone} and \ref{lem:antipodal}, for each fixed $\epsilon > 0$, we have 
\[
|\sigma(T) \gamma(t_i + T\epsilon)| + |\sigma(T) \gamma(t_i-T \epsilon)| \le 2 T + 10 T \epsilon^2 + o(T).
\]
Therefore, we obtain 
\begin{align*}
|x \gamma(t_i + T\epsilon)| + |x \gamma(t_i - T\epsilon)| 
&\le 2 |x \sigma(T)| + 2 T + 10 T \epsilon^2 + o(T) \\
&= 2 |x p_i| + 10 T \epsilon^2 + o(T).
\end{align*}
Tending $T \to 0$ and $\epsilon \to 0$, we have 
\[
|x \gamma(t)|^+(t_i) - |x \gamma(t)|^-(t_i) \le 0.
\]
Therefore, from \cite[1.3 (2)]{PP:qg}, $\gamma$ is known to be $\kappa$-convex at any time. 

Let $\gamma_m$ be an $m$-shortest path between $x$ and $y$ in $X$. 
Then, the limit $\gamma = \lim_{m \to \infty} \gamma_m$ is a minimal geodesic between $x$ and $y$. 
Then, $\gamma$ is $\kappa$-convex and parametrized by arclength, because so is each $\gamma_m$ and $L(\gamma_m) \to L(\gamma)$. 
Hence, $\gamma$ is a $\kappa$-quasigeodesic. 
By Theorem \ref{thm:qg characterization}, we know that $X$ is an Alexandrov space of curvature $\ge \kappa$. 
This completes the proof of Theorem \ref{thm:self gluing}. 
\end{proof} 

\begin{example} \upshape \label{ex:collapse}
We see that the gluing construction as in Corollary \ref{cor:partial gluing} naturally appear when considering collapsing Alexandrov spaces with boundaries. 
Let us fix an interval $I= [0,1]$. 
Then, an Alexandrov surface $M$ which is Gromov-Hausdroff close to $I$ is topologically classified. 
When $M$ has non-empty boundary, $M$ is homeomorphic to a disk, an annulus or a M\"obius band. 
One can give several families of nonnegatively curved metrics on such an $M$ converging to $I$ as follows. 
We consider the case $M$ is a disk. 
Let $S^1_\epsilon$ a circle of length $\epsilon$ and let $M_\epsilon = K_1(S^1_\epsilon)$ which is the closed ball in the Euclidean cone over $S^1_\epsilon$ centered at the origin and of radius $1$. 
Then, $M_\epsilon$ collapses to $I$ as $\epsilon \to 0$. 
Further, its double $D(M_\epsilon)$ collapses to the partial double $I \cup_f I$ of $I$, where $f$ is the identity on $\{1\} \subset \partial I$. 

In \cite{MY:bdry}, collapsing three-dimensional Alexandrov spaces with boundaries were classified. 
To state results in \cite{MY:bdry}, we needed the terminology of partial-double, that is, gluing of two same Alexandrov spaces along the same extremal subsets of codimension one via the identity. 
For instance, let $N_\epsilon = K(S^1_\epsilon) \times \mathbb R_{\ge 0}$, the product of the cone over a circle of length $\epsilon$ and the half-line. 
It converges to $(\mathbb R_{\ge 0})^2$ as $\epsilon \to 0$. 
Then, the double $D(N_\epsilon)$ converges to $\mathbb R \times \mathbb R_{\ge 0}$ which is the glued space of two $(\mathbb R_{\ge 0})^2$'s along an extremal subset $\mathbb R_{\ge 0} \times \{0\}$ via the identity, that is, 
$\mathbb R \times \mathbb R_{\ge 0}$ is obtained as 
$\{ (\mathbb R_{\ge 0})^2 \sqcup (\mathbb R_{\ge 2})^2 \}_{\mathrm{id} : \mathbb R \times \{0\} \to \mathbb R \times \{0\} }$ in our terminology. 
\end{example}

Extracting sufficient conditions for proving Theorem \ref{thm:self gluing}, 
the statements of Theorem \ref{thm:self gluing} and Corollary \ref{cor:partial gluing} are generalized to the following form. 
(However, it is hard to check the assumption of Theorem \ref{thm:non-ext gluing}, for general Alexandrov spaces).
\begin{theorem} \label{thm:non-ext gluing}
Let $M$ be an Alexandrov space and $E$ its codimension one extremal subset. 
Let $f : E \to \partial M$ be an isometric embedding with respect to the length metric. 
For $x \in E$, let $f_x' : \Sigma_x E \to \Sigma_{f(x)} E$ denote an isometric embedding with respect to the length metric obtained by Lemma \ref{lem:isom emb}. 
Then, we have 
\begin{enumerate}
\item If the metric space $(\Sigma_x)_{f_x'} = \Sigma_x /\!\! \sim$ obtained by gluing $\Sigma_x E$ and its image $f_x'(\Sigma_x E)$ along $f_x'$ in $\Sigma_x$ has curvature $\ge 1$ for every $x \in E$, then the metric space $M_f = M /\!\! \sim$ obtained by gluing $E$ and $f(E)$ along $f$ in $M$ is an Alexandrov space. 
\item Further, under the assumption of (1), if $M$ has curvature $\ge \kappa$, $E$ is connected and is $\kappa$-extremal in $M$, then $M_f$ has curvature $\ge \kappa$. 
\end{enumerate}
\end{theorem}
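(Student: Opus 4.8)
The plan is to run the proof of Theorem \ref{thm:self gluing} essentially verbatim, using the standing assumption on $(\Sigma_x)_{f_x'}$ in place of the inductive hypothesis on the dimension. Since the conclusion is local in nature, I would first reduce to producing a lower curvature bound near each point of $E_f$: by Lemma \ref{lem:length} every point of $M_f \setminus E_f$ has a neighborhood isometric to an open subset of $M$ and therefore inherits a local bound, so only points of $E_f$ require attention. For part (1) I would fix $x \in E$, use a local lower bound $\kappa$ of $M$ near $x$, and produce a local bound for $M_f$ there; for part (2) the globalization theorem (\cite{BGP}) then upgrades the uniform local bound $\kappa$ to a global one, using that $E$ is connected and $\kappa$-extremal.

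Fix $x \in E$. The isometric embedding $f_x'$ from Lemma \ref{lem:isom emb}, together with the hypothesis, gives that $\Sigma_x^\sharp := (\Sigma_x)_{f_x'}$ has curvature $\ge 1$, whence its $\kappa$-cone $T_x^\sharp$ has curvature $\ge \kappa$. The gradient-exponential construction of Lemma \ref{lem:tangent cone} then furnishes, for any tangent limit $K$ of $(r_i M_f, x)$, a distance non-contracting map $\ell : K \to T_{o_x} T_x^\sharp$ with $|\ell(v)| = |ov|$; this part of the argument uses only the $1$-Lipschitz gradient-exponential maps and transfers with no change.

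The delicate point is to re-establish the antipodal property (Lemma \ref{lem:antipodal}) in this generality. Its proof of the identity $|\xi\nu|_0 + |\eta\nu|_0 = \pi$ for $\nu \in \Sigma_x E_f$ runs a sub-induction, invoking the antipodal property one dimension lower at points $\bar\nu \in \Sigma_x E_f$. What legitimizes this is that the hypothesis is self-propagating: if $\Sigma_x^\sharp$ has curvature $\ge 1$, then for each $\bar\nu \in \Sigma_x E_f$ the space of directions $\Sigma_{\bar\nu}\Sigma_x^\sharp$ is again Alexandrov of curvature $\ge 1$, and it is canonically identified with the glued space $(\Sigma_{\bar\nu}\Sigma_x)_{(f_x')_{\bar\nu}'}$ built from the derived embedding of Lemma \ref{lem:isom emb}; hence the curvature-$\ge 1$ condition required for the next step of the induction is inherited automatically from the top level. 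Organizing the whole argument as an induction on $\dim M$, with the inductive hypothesis supplying Theorem \ref{thm:non-ext gluing} in lower dimensions, the remaining ingredients --- Corollary \ref{cor:antipodal}, the predistance comparison of Lemma \ref{lem:m-dist comp}, and the concluding argument that every $m$-shortest path is $\kappa$-convex and that limits of $m$-shortest paths are $\kappa$-quasigeodesics --- carry over unchanged. Theorem \ref{thm:qg characterization} then yields that $M_f$ is an Alexandrov space, of curvature $\ge \kappa$ under the additional hypotheses of (2).

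I expect the main obstacle to be exactly this self-propagation, namely the identification $\Sigma_{\bar\nu}(\Sigma_x)_{f_x'} \cong (\Sigma_{\bar\nu}\Sigma_x)_{(f_x')_{\bar\nu}'}$ asserting that passing to the space of directions commutes with the gluing, compatibly with the derived isometric embeddings of Lemma \ref{lem:isom emb}. This is the one place where replacing the inductive hypothesis of Theorem \ref{thm:self gluing} by a standing assumption must be justified with care; it should follow from a one-level-down application of the tangent-cone analysis in Lemma \ref{lem:tangent cone}, after which every other step is a faithful transcription of the proof of Theorem \ref{thm:self gluing}.
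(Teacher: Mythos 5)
Your overall strategy is exactly the one the paper intends (the paper gives no separate proof of Theorem \ref{thm:non-ext gluing}; it only asserts that the proof of Theorem \ref{thm:self gluing} goes through with the stated hypothesis), and you have correctly located the crux: the sub-induction inside Lemma \ref{lem:antipodal} requires the antipodal property one level down, hence requires the glued direction spaces $(\Sigma_{\bar\nu}\Sigma_x)_{(f_x')_{\bar\nu}'}$ to have curvature $\ge 1$, which the hypothesis does not assert. The gap is in your resolution of this point. The identification $\Sigma_{\bar\nu}\bigl((\Sigma_x)_{f_x'}\bigr) \cong (\Sigma_{\bar\nu}\Sigma_x)_{(f_x')_{\bar\nu}'}$ does \emph{not} follow from a one-level-down application of Lemma \ref{lem:tangent cone}. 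That lemma produces only a norm-preserving, distance non-contracting map $\ell$ from a blow-up limit of the glued space into the glued tangent cone --- equivalently, a norm-preserving $1$-Lipschitz surjection $D(\gexp^\sharp)$ from the glued cone onto the blow-up. This is a one-sided inequality: distances in the blow-up of the glued metric are bounded \emph{above} by distances in the gluing of the blow-ups, but nothing excludes shortcuts appearing in the limit, which is the reverse inequality you need. A norm-preserving $1$-Lipschitz surjection between Euclidean cones need not be an isometry (the cone over a covering map of circles is such a map), so knowing in addition that the target $\Sigma_{\bar\nu}\bigl((\Sigma_x)_{f_x'}\bigr)$ is Alexandrov does not rescue the implication by soft arguments. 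Moreover, the missing inequality is precisely the kind of statement the whole induction is designed to prove: in the proof of Theorem \ref{thm:self gluing} it is never needed as an input, because the inductive hypothesis applies to gluings along extremal subsets via isometries --- a class that \emph{is} closed under passing to spaces of directions --- whereas your setting is not, which is exactly why you need the identification. As it stands, invoking the identification to replace the inductive hypothesis is circular; an honest fix is either to strengthen the hypothesis to all iterated spaces of directions (with which your transcription does run verbatim), or to give an independent proof of the identification, which requires controlling chains with unboundedly many hops in the blow-up of the quotient metric.

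A second point where ``carries over unchanged'' fails: the proofs of Lemmas \ref{lem:antipodal} and \ref{lem:m-dist comp} repeatedly invoke Theorem \ref{thm:Liberman} for minimal geodesics of the gluing locus regarded as curves in $M$ \emph{in both copies} (e.g.\ ``Since $\sigma$ is a quasigeodesic in $M$ \dots'', applied when distances are measured to either representative). On the source side this is legitimate since $E$ is extremal, but on the image side the curve lies in $f(E) \subset \partial M$, and $f(E)$ is not assumed extremal; a minimal geodesic of $\bigl(f(E), |\cdot,\cdot|_{f(E)}\bigr)$ need not be a minimal geodesic of $\bigl(\partial M, |\cdot,\cdot|_{\partial M}\bigr)$, so the generalized Liberman lemma does not directly yield that it is a $\kappa$-quasigeodesic of $M$. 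This, too, must either be proven from the standing hypothesis or added to it.
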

To obtain an Alexandrov space by gluing, the assumption of (1) is needed. 
Actually, by gluing two rectangles of different sizes, we obtain a metric space admitting branching geodesics. 

\begin{remark}\upshape 
We can not replace an isometric involution in the statement of Theorem \ref{thm:self gluing} with an isometric group action. 
Indeed, let us consider a disk with flat metric and an effective isometric action of the cyclic group of order three on the boundary in the induced length metric. 
Then, the quotient space of the disk obtained by gluing the boundary along the isometries is not an Alexandrov space. 
Indeed, at a former boundary point, the space of directions is not an Alexandrov space, because it is a graph of degree three consisting of two nodes and three edges. 
\end{remark}

\end{document}